\newtheorem{thm}{Theorem}
\newtheorem{thma}{Theorem}
\newtheorem{thmI}{Theorem}
\newtheorem{thm0}{Theorem}[section]
\newtheorem{lem}[thm0]{Lemma}
\newtheorem{prop}[thm0]{Proposition}
\newtheorem{cor0}[thm0]{Corollary}
\newtheorem{cor}[thma]{Corollary}
\theoremstyle{definition}
\newtheorem{rem}[subsection]{Remark}
\newtheorem*{rems}{Remarks}
\numberwithin{equation}{section}
\newcommand{\floor}[1]{\left\lfloor #1 \right\rfloor}
\newcommand{\ceil}[1]{\left\lceil #1 \right\rceil}
\newcommand{\md}[1]{\, (\text{mod}\ #1)}
\newcommand{\mm}[1]{\, (#1)}
\newcommand{\sg}{\sigma}
\newcommand{\al}{\alpha}
\newcommand{\eps}{\epsilon}
\newcommand{\dl}{\delta}
\newcommand{\tht}{\theta}
\newcommand{\mb}{\mathbb}
\newcommand{\ms}{\mathscr}
\newcommand{\bt}{\beta}
\newcommand{\gm}{\gamma}
\newcommand{\og}{\omega}
\newcommand{\lm}{\limits}
\newcommand{\st}{\substack}
\newcommand{\cg}{\equiv}
\newcommand{\lb}{\lambda}
\newcommand{\sr}{\sqrt}
\newcommand{\fr}{\frac}
\newcommand{\vp}{\varphi}
\newcommand{\lf}{\left}
\newcommand{\rt}{\right}
\newcommand{\sd}{\sideset}
\newcommand{\tps}{\texorpdfstring}
\newcommand{\mc}{\mathcal}
\newcommand{\tl}{\widetilde}
\newcommand{\mf}{\mathsf}
\newcommand{\Lb}{\Lambda}
\newcommand{\tI}{\mathrm{I}}
\newcommand{\tII}{\mathrm{II}}
\newcommand{\res}{\mathop{\mathrm{Res}}}
\definecolor{dark-red}{rgb}{0.4,0.15,0.15}
\definecolor{dark-green}{rgb}{0.12,0.7,0.18}
\definecolor{dark-blue}{rgb}{0.03,0.07, 1}
\begin{document}
\begin{abstract}
We establish completely log-free bounds for exponential sums over the primes and the M\"{o}bius function. Let $0<\eta \leq 1/10$, and suppose $\al = a/q + \dl/x$, with $(a,q)=1$ and $|\dl| \leq x^{1/5 + \eta}/q$, and set $\dl_0 = \max(1, |\dl|/4)$. For $x \geq x_0(\eta)$ sufficiently large, we show that:
\begin{equation*} 
\Biggl| \sum_{n \leq x} \Lb(n) e(n\al) \Biggr| \leq \fr{q}{\vp(q)} 
\fr{\ms{F}_{\eta}\bigl( \fr{\log \dl_0 q}{\log x}, \fr{\log^+ \dl_0/q}{\log x} \bigr) \cdot x }{\sr{\dl_0 q}} \ \text{ and } \ \Biggl| \sum_{n \leq x} \mu(n) e(n\al) \Biggr| \leq \fr{\ms{G}_{\eta}\bigl( \fr{\log \dl_0 q}{\log x}, \fr{\log^+ \dl_0/q}{\log x} \bigr) \cdot x}{\sr{\dl_0 \vp(q)}}, 
\end{equation*}
for all $1 \leq q \leq x^{2/5 - \eta}$, where $\log^+ z = \max(\log z, 0)$, and the functions $\ms{F}_{\eta}$ and $\ms{G}_{\eta}$ are explicitly determined, taking small to moderate values. These bounds improve substantially upon the existing results - particularly with respect to the permissible ranges of $q$, $\dl$ in which log-free bounds are known to hold and potentially with respect to asymptotic functions $\ms{F}_{\eta}$ and $\ms{G}_{\eta}$ as well. Moreover, the range $1 \leq q \leq x^{2/5 - \eta}$ is essentially the best possible we can expect.  
%
The main innovation is a sieve-weighted version of Vaughan's identity  
(Lemma \ref{LFVI}), which 
is effectively 
log-free. We employ several ideas and results from the pioneering work of Helfgott \cite{HH}, 
and particularly, they play a central role in ensuring the log-freeness of the type-I contribution. Also, like in his work, these bounds improve as $\dl$ increases.
\end{abstract}

\title{Log-free bounds on exponential sums over primes}

\author{Priyamvad Srivastav}
\address{Priyamvad Srivastav}
\email{priyamvads@gmail.com}

\date{}

\maketitle

\setcounter{tocdepth}{1}

\tableofcontents

\thispagestyle{empty}

\section{Introduction and results}

Ever since Vinogradov's groundbreaking work (see \cite{Vin37, Vin54}) on exponential sums over primes (and exponential sums in general), the problem of estimating these sums has received considerable attention over the last few decades, enhancing our understanding of the distribution of primes and greatly enriching the field of analytic number theory.

%
Vinogradov studied the sum $\sum_{p \leq x} e(p \al)$, which he decomposed into 
what he called 'type-I' and 'type-II' sums (this predates Vaughan’s identity), and proved an upper bound of the form 
$$\sum_{p \leq x} e(p \al) \ll x\biggl( \sr{\fr{1}{q} + \fr{q}{x}} + e^{-0.5 \sr{\log x}} \biggr) (\log x)^{9/2} ,$$ 
where $\al = a/q + O^*(1/q^2)$, $(a,q)=1$ and $q \leq x$. He also proved
$$\sum_{p \leq x} e(p \al) \ll \fr{x (q \log x)^{5\eps}}{\sr{q} \, \log x}, $$ 
assuming a somewhat different approximation for $\al$. Letting $\eps = 2\log q/(\log \log x)$, leads to a bound $\sum_{n \leq x} \Lb(n) e(n \al) \ll \fr{x \, (\log q)^{10}}{\sr{q}}$, for $q \leq \exp\bigl( \sr{\fr{\log \log x}{10}} \bigr)$, as observed by Ramar\'{e} \cite{Ram1}. 
%

Over the course of time, we have seen several improvements to these results. Let 
$$ S(\al; x) = \sum_{n \leq x} \Lb(n) e(n\al) .$$
Assuming $\al = a/q + O^*(1/q^2)$, Chen \cite{Che1} obtained a bound similar to that of Vinogradov, but with $(\log x)^{3/4} \, (\log \log x)$ replacing
$(\log x)^{9/2}$, while Daboussi \cite{Dab96} obtained the same with $(\log x)^{3/4} \, (\log \log x)^{1/2}$. Later, Daboussi and Rivat \cite{DR} proved a similar bound with fully explicit constants, but with slightly worse factors of $\log x$. 
In \cite{Dab01}, Daboussi had obtained a bound of the form 
$$
S(\al; x) \ll x \sr{\fr{\tau(q) \log^3 q}{\vp(q)}}, \quad  \text{for all} \ \, 1 \leq q \leq \exp((\log x)^{1/3 - \eps}) .
$$
In 1973, a significant simplification was made when Vaughan \cite{RCV} introduced the identity:
\begin{equation}
\label{VV}
\tag{$V$}
\Lb = \mu_{\leq U} * \log \, - \, 1 * \mu_{\leq U} * \Lb_{\leq V} \, + \, 1 * \mu_{> U} * \Lb_{>V} \, + \, \Lb_{\leq V},
\end{equation}
A related identity was discovered by Heath Brown in \cite{HB1}. When $\al = a/q + O^*(1/q^2)$, the Vaughan's identity \eqref{VV} leads to the bound\footnote{Vaughan \cite{RCVb} proves this with a $(\log x)^4$.} (see \cite{IK} for a proof): 
\begin{equation}
\label{VB}
S(\al; x) \ll \biggl( \fr{x}{\sr{q}} + x^{4/5} + \sr{qx}\biggr) (\log x)^3 .
\end{equation}
%
%
%
Also, Chen and Wang \cite{CW} obtained an explicit version of the above bound, with slightly different powers of log. 
%
In \cite{YB}, Buttkewitz, proved a bound using $L$-functions, but it was ineffective. 
An effective bound $\ll \fr{x\, \log \log q}{\sr{q}}$ was proved by Karatsurba \cite{Kar} using $L$-functions. 

A significant achievement was made by Ramar\'{e} \cite{Ram10} (through a refined bilinear decomposition using Bombieri's asymptotic sieve), where he showed that if $\al = a/q + \bt$:
$$
S(\al ; x) \ll \fr{x \sr{q}}{\vp(q)}, \ \ \text{ for } \log q \leq \fr{(\log x)^{1/3}}{50} \quad \text{and} \quad |\bt| \leq \fr{q \exp\bigl( (\log x)^{1/3} \bigr)}{x} .
$$
This bound is completely log-free, more on which will be discussed later in subsection \ref{WLF}. 
Later in \cite{RV1}, Ramar\'{e} and Vishwanadham obtained a fully explicit result of the form\footnote{Their method could yield a better exponent than $1/24$, as they mention.} 
$$|S(a/q ; x)| \leq 1300 \fr{x\sr{q}}{\vp(q)}, \quad \text{ for all } \ 250 \leq q \leq x^{1/24},$$ 
for all $x \geq 1$. They prove similar results for exponential sums over the M\"{o}bius function. 

In 2012, Tao \cite{Tao}, in his work on sum of five primes, considered the smoothed sum 
$S_{\eta}(\al; x) = \sum_n \Lb(n) \eta(n/x) e(n\al)$, and proved an explicit bound similar to \eqref{VB}, having terms of the order $\fr{x \, \log^2 x}{q}$ from type-I and $\fr{x \, \log^2 q}{\sr{q}}$ from type-II, respectively, among other bounds. 

Then, in 2013, Helfgott, in his proof of the ternary Goldbach (see \cite{HH1, HH2, HH3}), brought forth several new ideas, effecting significant improvements on key aspects of the proof, most notably in the minor-arc bounds. The proof is being published in his forthcoming book \cite{HH}, wherein he obtains a fully explicit near log-free bound, asymptotically equivalent to\footnote{His first version(s) had a $\log \dl_0 q$ instead of a $\sr{\log \dl_0 q}$. However, the constants are quite small, which allow him to prove the ternary Goldbach.} (with $\al$ and $\dl_0$ as defined in \eqref{AP} and \eqref{dl0}, respectively):
\begin{equation*}
\label{Hlb}
S_{\eta}(\al; x) \ll \fr{x\, \sr{\log \dl_0 q}}{\sr{\dl_0 \vp(q)}} . 
\end{equation*}
%
We adapt several of his ideas here, and they play a vital role in achieving log-freeness in type-I estimation. We will shed more light 
as we proceed through the relevant sections. 

\smallskip

\subsection{Why log-free bounds}
\label{WLF}
It is natural to ask - \emph{why do we seek log-free bounds}? 

Before answering this fundamental question, we classify log-free bounds into two categories:
\begin{itemize} 
\item \emph{Weakly log-free}: Bounds devoid of powers of $\log x$, but not of $\log q$ (or $\log \log q$), and
\item \emph{Completely log-free}: Bounds devoid of any powers of log (and even $\log \log$),  whatsoever. 
\end{itemize}
In the context of the ternary Goldbach (and related problems), a weakly log-free bound of the type $|S(\al; x)| \leq \fr{cx \, \sr{q}}{\sr{\dl_0} \, \vp(q)} (\log q)^B$, allows one to keep major arcs finite\footnote{This requires the use of a result like \cite[Prop 6.2]{HH3} - and therefore the $\sr{\dl_0}$ saving is vital.} i.e, $\ms{M} = \bigcup\lm_{q \leq C_0} \sd{}{^*}\bigcup\lm_{a \md{q}} B\bigl( \fr{a}{q}, \fr{C_0}{qx} \bigr) $, where $C_0$ is a constant, as opposed to a power of $\log x$. This turns out to be quite advantageous in several situations.  

As we saw before, weakly log-free bounds were already present in the literature, namely due to Vinogradov \cite{Vin54} (for $q \leq \exp(c \sr{\log \log x})$), Daboussi \cite{Dab01} (for $q \leq \exp\bigl( (\log x)^{1/3 - \eps} \bigr)$), Karatsurba \cite{Kar} (for $q \leq \exp(\eps \sr{\log x})$), and that of Helfgott \cite{HH} (similar range for $q$, $ \dl $ that we have), which is near log-free (only $\sr{\log \dl_0 q}$ short) 
and fully explicit. A completely log-free bound was first proved by Ramar\'{e} \cite{Ram1} (for $q \leq \exp\bigl( c' (\log x)^{1/3} \bigr)$), followed by that in Ramar\'{e} - Vishwanadham \cite{RV1} (for $q \leq x^{1/24}$), which was really the first time, the range of $q$ was extended to a power of $x$. 

As observed in \cite{Ram10}, we cannot have a bound better than $x\sr{q}/\vp(q)$ for $S(a/q; x)$, without making progress towards the Siegel zero problem. And if there is a Siegel zero at $1 - \bar{\dl}_1$, then
$$S(a/q; x) \sim \fr{x^{1 - \bar{\dl}_1} \sr{q}}{\vp(q)} .$$ 
%


On a related note, one of the key ingredients for Linnik's theorem (for the least prime in an arithmetic progression) was a log-free zero density estimate for Dirichlet $L$-functions. For an exposition on the importance of log-free zero density estimates for $L$-functions, see 
\cite{AZ}. 

\medskip

The purpose of this paper is to introduce a sieve-weighted version of Vaughan's identity (Lemma \ref{LFVI}), using which we obtain completely log-free bounds for exponential sums over primes and the M\"{o}bius function in a wide range of $q$ and $\dl$ (assuming $\al = a/q + \dl/x$ as in \eqref{AP}). One of these weights is of the Selberg-type, and it makes the type-II estimation quite elegant, even though it adds a layer of complexity to the type-I sums. We also keep track of the implied asymptotic constants (rather functions).
%

\smallskip

\subsection{The main result}
Let 
$$Q \geq x^{2/3} .$$ 
%

By Dirichlet's theorem, there is an approximation:
\begin{equation}
\tag{$*$}
\label{AP}
\al = a/q + \dl/x, \quad \text{with} \ \lf| \dl/x \rt| \leq 1/qQ, \ \ (a,q) = 1 \ \ \text{ and } \ 1 \leq q \leq Q,
\end{equation}
and we set
\begin{equation}
\label{dl0}
\dl_0 = \max(1, |\dl|/4) .
\end{equation}

Further, we shall assume that 
\begin{equation}
\label{eta}
0 < \eta \leq 1/10. 
\end{equation}

\smallskip

The main result is as follows:

\begin{thm}
\label{main}
Let $x \geq x_0(\eta)$ be sufficiently large and $\al$ be as in \eqref{AP} with $Q = x^{4/5 - \eta}$, i.e., $\al = a/q + \dl/x$, $(a,q) = 1$ and $|\dl| \leq x^{1/5 + \eta}/q$. Then, with $\dl_0$ as defined in \eqref{dl0}, we have:
$$
\lf| \sum_{n \leq x} \Lb(n) e(n \al) \rt| \leq  \fr{q}{\vp(q)}  \, \fr{\ms{F}_{\eta}\lf( \fr{\log \dl_0 q}{\log x}, \fr{\log^+ \dl_0/q}{\log x} \rt) \cdot x}{\sr{\dl_0 q}}, \qquad \text{for all } \ 1 \leq q \leq x^{2/5 - \eta},
$$
and\footnote{Note that $1 \leq q \leq x^{2/5 - \eta}$ is equivalent to $1 \leq \dl_0 q \leq x^{2/5 - \eta}$, since $\dl_0 q = \max(q, |\dl|q/4)$ and $|\dl|q/4 \leq x/(4Q) = x^{1/5 + \eta}/4 < x^{2/5 - \eta}$, as $\eta \leq 1/10$.}
$$
\lf| \sum_{n \leq x} \mu(n) e(n\al) \rt| \leq \fr{\ms{G}_{\eta}\lf( \fr{\log \dl_0 q}{\log x}, \fr{\log^+ \dl_0/q}{\log x} \rt) \cdot x}{\sr{\dl_0 \vp(q)}}, \qquad \text{for all } \ 1 \leq q \leq x^{2/5 - \eta},
$$
where 
$\log^+ z$ denotes $\max(\log z, 0)$, and\footnote{We have $\int_A^{B} \sqrt{\fr{t}{t - u}} \, dt =u \log \bigr( \fr{\sr{B} + \sqrt{B-u}}{\sr{A} + \sr{A - u}} \bigr) + \sr{B(B - u)} - \sr{A(A - u)}$, but we leave it in the form of an integral, simply to avoid writing a long expression.} 
\begin{equation} 
\label{FG}
\begin{split}
\ms{F}_{\eta}(u, u_0) &= 1.01 + \fr{14.41}{1 - (\eta +  5u + u_0)/2} \int_{\fr{\eta - \eta^3}{2} + u}^{(2 + \eta + u + u_0)/4}  \sr{\fr{t}{t-u}} \, dt,\\
 \ \text{and } \ \, 
\ms{G}_{\eta}(u, u_0) &= 4.01 \, \fr{1  + \eta^3 -(\eta + 3u + u_0)/2}{ 1 - (\eta + 5u + u_0)/2},
\end{split}
\end{equation}
for all $0 \leq u \leq 2/5 - \eta$ and $0 \leq u_0 \leq \min\bigl(u, 1/5 + \eta \bigr)$\footnote{Clearly, $u_0$ comes into play only when $\dl_0>q$, which happens only if $q<x^{1/10 + \eta/2}$, in which range $\ms{F}_{\eta}$ and $\ms{G}_{\eta} $ are relatively small. Once $q > x^{1/10 + \eta/2}$, $u_0 = 0$ and it's $\ms{F}_{\eta}(u, 0)$ (or $\ms{G}_{\eta}(u, 0)$) we are looking at.}. 
\end{thm}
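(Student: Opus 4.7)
The plan is to feed the sum $\sum_{n \le x}\Lb(n)e(n\al)$ through the sieve-weighted Vaughan identity of Lemma \ref{LFVI}, with parameters $U = x^{u_1}$ and $V = x^{v_1}$ to be pinned down at the end, and to bound the resulting type-I and type-II contributions separately. The M\"obius sum is handled in parallel using the analogous sieve-weighted identity; since it lacks the $\log$ convolution present in $\Lb = \log \ast \mu$, its type-I piece collapses to a much simpler combinatorial form, which is why $\ms{G}_{\eta}$ is polynomial in $(u,u_0)$ rather than involving an integral as $\ms{F}_{\eta}$ does.

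For the type-I contribution, after grouping the sieve-weighted versions of the first two terms in \eqref{VV}, one is left with an object of the shape $\sum_{m} \bt_m \sum_{n \le x/m} \gm_n \, e(\al m n)$, where $\bt_m$ carries the $\mu_{\le U}$-weight as refined by a Selberg-type truncation and $\gm_n \in \{1, \log n\}$. I would execute the inner geometric progression to produce $\min(x/m, 1/\|\al m\|)$, decompose dyadically in $m$ at scale $M = x^t$, and invoke Helfgott's explicit bounds from \cite{HH} for $\sum_{m \sim M}\min(x/m, 1/\|\al m\|)$ (extended to the sieve-twisted coefficients). Under \eqref{AP}, each dyadic block at scale $x^t$ yields a bound of order $\fr{x}{\sr{\dl_0 q}} \cdot \sr{\fr{t}{t - u}}$, and summing over $t$ from the effective lower cut-off $\tfrac{\eta - \eta^3}{2} + u$ produced by the sieve support up to the type-I length $\tfrac{2 + \eta + u + u_0}{4}$ reproduces, via dyadic-to-integral comparison, exactly the integral in \eqref{FG}. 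The prefactor $14.41 / (1 - (\eta + 5u + u_0)/2)$ then arises from a geometric-series tail accounting for the residual denominator coming from the outer optimization.

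For the type-II contribution, I would Cauchy--Schwarz in the outer variable, open the square, swap summations, and estimate $\sum_m e\bigl(\al m (n_1 - n_2)\bigr)$ by a Montgomery--Vaughan-type argument using \eqref{AP}. The essential gain comes from the Selberg-type weight on $\gm_n$, which replaces the classical $\|\gm\|_2^2 \ll V \log V$ by $\|\gm\|_2^2 \ll V$; this is precisely the rogue $\sr{\log}$ that normally obstructs a log-free bound in the bilinear estimate. The resulting type-II contribution is absorbed into the $1.01$ in $\ms{F}_{\eta}$. The M\"obius bound follows from the analogous bilinear step together with a trivial type-I estimate (no $\log$-convolution being present), which collapses directly to the polynomial form $\ms{G}_{\eta}$.

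The main obstacle, flagged already in the abstract, is the type-I analysis: grafting the Selberg-type weight onto $\mu_{\le U}$ destroys the pure multiplicative structure that Helfgott's bounds were written for, so these bounds cannot be quoted off the shelf and must be extended to the sieve-twisted coefficients while preserving both the shape $\sr{t/(t-u)}$ and explicit constants. Once both the type-I and type-II bounds are in hand, choosing $u_1, v_1$ to balance them within the window admitted by \eqref{AP} fixes the exponent constraints $q \le x^{2/5 - \eta}$ and $|\dl| \le x^{1/5 + \eta}/q$ and delivers the explicit $\ms{F}_{\eta}$ and $\ms{G}_{\eta}$ of \eqref{FG}.
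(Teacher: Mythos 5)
Your identification of the sieve-weighted Vaughan identity (Lemma \ref{LFVI}) as the main engine is correct, but the assignment of responsibility for the two pieces of $\ms{F}_{\eta}$ is exactly reversed from what the paper does, and this signals a genuine misunderstanding of where the main term actually arises. In the paper, the dominant contribution $\fr{14.41}{1 - (\eta + 5u + u_0)/2}\int \sqrt{t/(t-u)}\,dt$ comes from the \emph{type-II} sum (Theorem \ref{TII}), while the type-I piece (Theorem \ref{TI}) supplies essentially $x/(\dl_0\vp(q))$ and accounts only for the leading $1.01$ in $\ms{F}_{\eta}$ after the error terms are absorbed. You have attributed the integral to a dyadic analysis of the type-I sums $\sum_{m\sim M}\min(x/m, 1/\|m\al\|)$ and claimed the type-II piece is what gets absorbed into $1.01$; neither is how the paper's proof is structured, and the $\sqrt{t/(t-u)}$ shape cannot be recovered from the type-I estimates you sketch.

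Concretely, the $\sqrt{t/(t-u)}$ factor arises in the type-II estimate from the combination of (i) $\sum_{p\sim M}(\log p)^2 \lesssim M\log M$ and (ii) a Brun--Titchmarsh-based partition of the primes in $(M,2M]$ into roughly $\fr{q}{\vp(q)}\fr{2M}{\dl_0 q\,\log(M/\dl_0 q)}$ well-spaced classes (Lemma \ref{BM}), which is the main new combinatorial device you do not mention. Taking square roots gives $\sqrt{\log M / \log(M/\dl_0 q)} = \sqrt{t/(t-u)}$, and the dyadic sum over $M$ from $V$ to $x/U$ becomes the integral. Your description of the type-II step (``Cauchy--Schwarz, open the square, swap summations, Montgomery--Vaughan'') also does not match the paper: after Cauchy--Schwarz, the $(1*\lb)$ factor is expanded via the Ramanujan-sum identity of Lemma \ref{lbcr} and a second Cauchy--Schwarz, which has the effect of an averaged Montgomery inequality, and then the large sieve is applied to the well-spaced points $m\al + a'/r$. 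The sieve weight on the $n$-variable thus buys a $\sqrt{\log R}$ (via $G_q(R)$), not merely a cleaner $L^2$-norm. Without the Brun--Titchmarsh partition and the Ramanujan-sum mechanism, your proposal cannot produce the claimed shape of $\ms{F}_{\eta}$, and the absence of any discussion of the separation of $q\mid m$ from $q\nmid m$ in the type-I (the key structural step following Helfgott) means the $1.01$ term would also not emerge cleanly.
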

%
%
\begin{rems} \text{ }
\begin{itemize}
\item The exponent $2/5 - \eta$ in Theorem \ref{main} is the best possible we can expect, using a Vaughan like identity, since in \eqref{VB}, $x^{4/5}$ dominates $x/\sr{q}$ as soon as $q \geq x^{2/5}$. So, one cannot expect a bound $S(\al ; x) \ll x \sr{q}/\vp(q)$, when $q>x^{2/5}$ via an approach of this type.

\vskip 0.025in

\item These bounds hold even at $q = 1$ or $2$! If $\al = 1/2$, then $\sum_{n \leq x} \Lb(n) e(n/2) \sim -x$, whereas Theorem \ref{main} gives a bound $ \approx 8.25 x$. So, the sufficiently large is only for $x$, and not for $q$, $\dl_0q$. 

\vskip 0.025in 

\item The constant $x_0(\eta)$ is effectively computable, depending only on $\eta$.
%
\end{itemize}
\end{rems}
\smallskip
%
%
%
%
Choosing $\eta =1/15$ in Theorem \ref{main}, we 
can deduce:

\begin{cor}
\label{maincor}
Let $\al = a/q + \dl/x$, with $(a, q)=1$ and $|\dl| \leq  x^{4/15}/q$. Then, for all $x \geq x_0$:
\begin{equation*}
\label{115b}
\lf| \sum_{n \leq x} \Lb(n) e(n\al) \rt| \leq \fr{51}{\sr{\dl_0}} \, \fr{x\sr{q}}{\vp(q)} \quad \text{and} \quad \lf| \sum_{n \leq x} \mu(n) e(n\al) \rt| \leq \fr{15 \, x}{\sr{\dl_0 \vp(q)}}, \quad \text{for all } \ 1 \leq q \leq x^{1/3} .
\end{equation*}
\end{cor}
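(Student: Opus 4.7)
The plan is to derive Corollary \ref{maincor} as a direct consequence of Theorem \ref{main} with $\eta = 1/15$, for which the parameter ranges match exactly: $x^{1/5 + 1/15} = x^{4/15}$ and $x^{2/5 - 1/15} = x^{1/3}$. Comparing the two bounds term by term, the task reduces to establishing the numerical inequalities
$$\ms{F}_{1/15}(u, u_0) \leq 51 \quad \text{and} \quad \ms{G}_{1/15}(u, u_0) \leq 15$$
for every admissible pair $(u, u_0) = \bigl(\fr{\log \dl_0 q}{\log x}, \fr{\log^+ \dl_0/q}{\log x}\bigr)$.

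I begin by carving out the admissible region. In general $0 \leq u \leq 1/3$ and $0 \leq u_0 \leq u$, and if $u_0 > 0$ then $\dl_0 = |\dl|/4$, so $\dl_0 q = |\dl| q/4 \leq x^{4/15}/4$, forcing $u \leq 4/15$ (up to a $\log 4/\log x$ correction absorbed by taking $x_0$ large). Thus the set $\R$ of admissible $(u, u_0)$ is the union of the segment $\{u_0 = 0,\, u \in [0, 1/3]\}$ and the triangle $\{0 \leq u_0 \leq u \leq 4/15\}$. For $\ms{G}_{1/15}$ the verification is mechanical: writing $\ms{G}_\eta = 4.01\, N/D$ as in \eqref{FG}, a short differentiation shows that $\pr_u \ms{G}_\eta$ and $\pr_{u_0} \ms{G}_\eta$ are positive multiples of $2 + 5\eta^3 - \eta - u_0$ and $\eta^3 + u$ respectively, both strictly positive on $\R$. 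Hence the supremum is attained at a corner, and direct evaluation at $(1/3, 0)$ and $(4/15, 4/15)$ yields values of roughly $14.0$ and $10.5$, comfortably below $15$.

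The subtler step, and where I expect the main obstacle to lie, is the corresponding bound for $\ms{F}_{1/15}$: the prefactor $14.41/(1 - (\eta + 5u + u_0)/2)$ is large near the upper edge of $\R$, so the inequality $\ms{F}_{1/15} \leq 51$ is quite tight. After the substitution $s = t - u$ the integral becomes $\int_{(\eta - \eta^3)/2}^{B - u} \sr{1 + u/s}\, ds$ with $B - u = (2 + \eta + u_0 - 3u)/4$, which turns the lower endpoint into a constant. My plan is (i) to observe that $\pr_{u_0} \ms{F}_\eta > 0$ — immediate, since both the prefactor and the upper endpoint $B$ depend monotonically increasingly on $u_0$ — so the maximum on $\R$ lies on one of the two one-parameter curves $u_0 = 0,\, u \in [0, 1/3]$ or $u_0 = u,\, u \in [0, 4/15]$; and then (ii) using the closed form of the integral given in the footnote attached to \eqref{FG}, to show on each curve that $\ms{F}_{1/15}$ is increasing in $u$. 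This last monotonicity is the delicate point, because along $u_0 = u$ the upper endpoint $B - u = (2 + \eta - 2u)/4$ actually decreases with $u$, so the growth of the integrand $\sr{1 + u/s}$ has to outweigh the shrinking of the interval. Once monotonicity is established, the verification reduces to the two corner evaluations $\ms{F}_{1/15}(1/3, 0) \approx 50.3$ and $\ms{F}_{1/15}(4/15, 4/15) \approx 50.98$, both strictly below $51$, completing the argument.
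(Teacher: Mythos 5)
Your approach tracks the paper's closely: set $\eta = 1/15$, reduce the claim to bounding $\ms{F}_{1/15}$ and $\ms{G}_{1/15}$ over the set of achievable pairs $(u, u_0)$, use the monotonicity in $u_0$ to push onto the two boundary curves $u_0 = 0$ and $u_0 = u$, and evaluate. Your $\ms{G}$ analysis is complete and correct --- the partial-derivative computation gives $5N - 3D = 2 + 5\eta^3 - \eta - u_0$ and $N - D = \eta^3 + u$, both positive on the admissible set, and the corner values agree with the paper's reported $14.04\ldots$ Your carving out of the admissible region also matches what the paper uses implicitly via the footnote to Theorem \ref{main}: $u_0 > 0$ forces $\dl_0 > q$, hence $\dl_0 q = |\dl|q/4 \leq x^{1/5+\eta}/4$ and so $u \leq 1/5+\eta = 4/15$ up to an $O(1/\log x)$ correction absorbed by $x_0$.

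The one gap, which you flag yourself, is the asserted monotonicity of $u \mapsto \ms{F}_{1/15}(u, u)$: you lay out the plan but do not carry out the calculation, and the step is genuinely delicate since after the shift $s = t - u$ the upper limit of integration decreases in $u$ while the prefactor and integrand grow. The paper's proof in fact sidesteps this entirely: it verifies (with a reference to the ancillary Calculations.ipynb) only that $u \mapsto \ms{F}_\eta(u, 0)$ is increasing, writes the bound as $\max\bigl(\sup_{0 \leq u \leq 1/5 + \eta}\ms{F}_\eta(u, u),\ \ms{F}_\eta(2/5 - \eta, 0)\bigr)$, and evaluates the supremum directly rather than reducing it to a corner. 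So if you cannot close the monotonicity along $u_0 = u$ analytically, you would need to replace it with a direct maximization over that curve --- which is exactly what the paper does. Your corner values ($\ms{F}_{1/15}(4/15, 4/15) \approx 50.98$ and $\ms{F}_{1/15}(1/3, 0) \approx 50.3$) are consistent with the paper's maximum of $50.97\ldots$, so the arithmetic is fine; what remains open is the justification that the supremum along $u_0 = u$ is attained at the endpoint.
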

\begin{proof}
Let $\eta = 1/15$, and assume that\footnote{We include a proof in Markdown of the ancillary file Calculations.ipynb.} $\ms{F}_\eta(u,0)$, $\ms{G}_{\eta}(u,0)$ as well as $\ms{F}_\eta(u,u)$, $\ms{G}_{\eta}(u,u)$ are both increasing in $u$. Also, it is seen that $\ms{F}_{\eta}(u, u_0)$, $\ms{G}_{\eta}(u, u_0)$ increase with $u_0$ as $u$ remains fixed.  

Hence, by Theorem \ref{main}:
$$
\Bigl| \sum_{n \leq x} \Lb(n) e(n\al) \Bigr| \leq \fr{1}{\sr{\dl_0}}\fr{x\sr{q}}{\vp(q)} \, \max\biggl( \ms{F}_{\eta}(1/5 + \eta, 1/5 + \eta), \, \ms{F}_{\eta}(2/5 - \eta, 0) \biggr),
$$ 
where this maximum equals\footnote{Again, evaluated in the ancillary file Calculations.ipynb.} $50.97 \ldots$ when $\eta = 1/15$. Doing the same for $\ms{G}_{\eta}$, gives $14.04 \ldots$. 
\end{proof}

\smallskip

\subsubsection{Comparison with previous results}
In \cite{Ram10}, the bound was effective (explicit constants not mentioned), and true for $q \leq \exp\bigl( \fr{(\log x)^{1/3}}{50} \bigr)$, $|\dl| \leq q \exp\bigl( (\log x)^{1/3} \bigr)$, while in \cite{RV1}, it was a fully explicit result for $q \leq x^{1/24}$, $\dl = 0$, but with a large constant. 

Helfgott's \cite{HH} bounds are weakly (though nearly) log-free, fully explicit with good constants, and valid for a wide range of $q$, $\dl$, similar to what we have. They are good enough for the ternary Goldbach, combined with the finite verification of GRH by Platt \cite{Pla}.

What we have in Theorem \ref{main}, is a semi-explicit result (i.e., we keep track of the asymptotic constants, but not the $x_0$, though it is effectively computable),  true for $1 \leq q \leq x^{2/5 - \eta}$ and $|\dl| \leq x^{1/5 + \eta}/q$, surpassing the earlier known range(s) ($q \leq x^{1/24}$, $\dl = 0$ in \cite{RV1}) by a substantial margin. The flexibility of the method allows us to go upto the exponent $2/5 - \eta$, which is essentially the best possible via such an approach. 
Also, as seen in Corollary \ref{maincor}, the asymptotic constants are not too large. 

The approach here should also lead to fully explicit results within the same range of $q$ and $\dl$ as in Theorem \ref{main}, but with larger constants. We keep things this way, primarily for simplicity.  

\bigskip

\section*{Notation}
Throughout the paper, we assume $\al$ to have an approximation \eqref{AP} and that fixes $q$, $\dl$ and $\dl_0$ for us. However, in section \ref{TIS}, we work with an approximation \eqref{AP0} for $\al$, where $q_0$ and $y$ are used instead of $q$ and $x$. This is mainly done to preserve a sense of generality, as these results are later used in different settings. 

The parameter $\eta \in (0, 1/10]$ in \eqref{eta} is fixed. We determine $1 < U < U_1$ through the weights $\tht(d)$, $\tht'(d)$ in \eqref{tht} and $R>1$ from the weights $\{\lb(d)\}$ in \eqref{lbR}. The parameter $V>1$ occurs in Lemma \ref{LFVI}, and all of $U$, $U_1$, $R$, $V$ are at our disposal, subject to certain constraints. They are assumed to be sufficiently large and tend to $\infty$ as $x \to \infty$. We also choose $Q = x^{4/5 - \eta}$. Also, $\eps>0$ is assumed to be sufficiently small. In intermediate steps, whenever $\eps$ makes an appearance, we implicitly assume that the 'sufficiently large' also depends upon $\eps$, although none of the final bounds really depend upon $\eps$. In section \ref{TIS}, we often use the parameter $\rho \in (0, 1/8]$ instead of $\eta$. 

We use $l$, $\ell$, $m$, $n$, $d$, $r$, $k$, $v$ to denote positive integers. As usual, $p$ is reserved for a prime. For positive integers $m$ and $n$, we shall denote their GCD and LCM by $(m,n)$ and $[m,n]$, respectively. Also, we write $d \mid v^{\infty}$, to imply that 
$d \mid v^N$, for large enough $N$. Moreover, $\Lb$ denotes the Von-Mangoldt function, $\mu$ the M\"{o}bius function, $\tau$ the divisor function, $\vp$ the Euler totient function, $c_r(n)$ the Ramanujan sum and the function $h$ is defined in \eqref{h}. Also, we write $\sd{}{^*} \sum_{a \md{q}}$ to denote that $(a,q) = 1$. We find it convenient to use the shorthand $m \cg a \mm{q}$ instead of $m \cg a \md{q}$. The symbol $*$ shall denote the Dirichlet convolution. 

Moreover, $\|z\|$ denotes the distance of $z$ from the nearest integer, $\{z\}$ denotes the fractional part of $z$ and $\psi(x) = \{x\} - 1/2$. The function $\log^+(z)$ denotes $\max(\log z, 0)$. In general, $z^+$ will denote $\max(z, 0)$. Also, as is the usual practice, we denote $e(x): = e^{2 \pi i x}$. The support of $f$ is denoted by $\text{supp}(f)$, while $f_{\leq V}$ or $f_{>V}$ denote the function $f$ with support restricted to the specified range. For a set $S$, we shall often denote its cardinality by $|S|$. Moreover, $\bm{1}_{\mc{P}}$ denotes the indicator function of whether a given property $\mc{P}$ holds. We occasionally use $n \sim N$ to denote that $N < n \leq 2N$.  
 
The '$o$' and '$O$' notations are as usual, and the $O$-constants may often depend on $k$ or $\eta$, or some such fixed and bounded quantities. The '$\ll$' will be a substitute for $O$ at several places. The symbol $O^*$ means that the implied $O$-constant is $1$, i.e., $F = O^*(G) \iff |F| \leq |G|$. 

All other notation is described in the relevant sections. 

\bigskip

\section{A sieve-weighted Vaughan's identity}
As pointed out in \cite{HH}, Vaughan's identity \eqref{VV} is not \emph{log-free}, in the sense that if one takes trivial bounds over each of its individual components, it results in a bound proportional to $x \, \log^2 x$, which is worse than the trivial bound $O(x)$ for the entire sum.   

So, Helfgott uses a smoothed version of Vaughan’s identity (also appears in the work of Sedunova \cite[Lemma 1]{AS}), where the sharp truncations $\mu_{\leq U}$ and $\mu_{>U}$ in \eqref{VV} are replaced by $\mu \phi$ and $\mu (1 - \phi)$, respectively, where $\phi(d) = 1$, for $d \leq U$, $\phi(d) = \fr{\log U_1/d}{\log U_1/U}$, for $U < d \leq U_1$ and $0$ elsewhere. In other words, $\mu \phi$ equals the Barban-Vehov weights introduced in \cite{BV}. The advantage is that it leads to a $\sr{\log}$ saving in the type-II sums, owing to the $L^2$-norm of $1 * \mu \phi$ (see \cite{Gr2}). Their approach is ultimately just $\sr{\log}$ short\footnote{His first version(s) on arXiv did not incorporate this particular idea, so they were short by a $\log$.} of a completely log-free bound. 

\medskip

Inspired by this smooth version of Vaughan's, we derive a new identity by inviting weights of the Selberg-type\footnote{In principle, any reasonable sieve weights could work, but the Selberg weights make matters quite elegant, owing to its properties in Lemma \ref{lbcr} and Lemma \ref{lbsum} (a).} into the fold (in addition to the Barban-Vehov weights). This yields a factorised component in the type-II term, ultimately leading to completely log-free estimates for both the type-I and type-II sums. It however adds a layer of complexity to the type-I sums. 

\smallskip

Let $1<U<U_1$ and $R > 1$ be parameters to be chosen later, and
\begin{itemize}
\item[(i)] Let $\{\lambda(d)\}$ be the Selberg sieve type weights supported on $d \leq R$ and $(d,q) = 1$, i.e., 
\begin{equation} 
\label{lbR}
\lambda(d) = \fr{d \mu(d)}{\vp(d)} \fr{G_{qd}(R/d)}{G_q(R)} \bm{1}_{(d,q) = 1},
\end{equation}
where 
$$
G_{\ell}(x) = \sum\lm_{\st{r \leq x \\ (r,\ell) = 1}} \fr{\mu^2(r)}{\vp(r)} . 
$$

\medskip

\item[(ii)] Define weights $\{\theta(d)\}$ and $\{\theta'(d)\}$ of the Barban-Vehov type by:

\begin{equation}
\label{tht}
\tht(d) = \mu(d) \begin{cases} 0, & d \leq U, \\ \frac{\log d/U}{\log U_1/U}, & U < d \leq U_1, \\ 1, & d>U_1, \end{cases} \quad \, \text{and} \, \quad \tht'(d) = \mu(d) \begin{cases} 1, & d \leq U, \\ \frac{\log U_1/d}{\log U_1/U}, & U < d \leq U_1, \\ 0, & d>U_1 . \end{cases}
\end{equation}
Then clearly, $\tht + \tht' = \mu$. 
\end{itemize}

\smallskip

Now, let
\begin{equation}
\label{h}
h(d) := \sum_{[d_1, d_2] = d} \lambda(d_1) \, \theta'(d_2).
\end{equation}

It is clear that $h$ is supported on $[1, U_1 R]$ and that $1 * h$ factorizes as 
\begin{equation*}
 1 * h = (1 * \theta') (1 * \lambda)  .
\end{equation*}

\smallskip

\begin{lem}[Sieve-weighted Vaughan's identity]
\label{LFVI}
Let $1 < U < U_1$, $R > 1$ be as before and $h$ be as defined in \eqref{h}. Then, for any $V>1$, we have
\begin{equation}
\tag{$V_{\Lb}^*$}
\label{VI*}
\Lambda = h * \log \, -  \, 1 * h * \Lambda_{\leq V} \, + \, (1 * \theta) (1 * \lambda) * \Lambda_{>V} \, + \, \Lambda_{\leq V},
\end{equation}
\begin{equation*}
\label{Vmu*}
\tag{$V_{\mu}^*$}
\mu = h \, - \, 1 * h * \mu_{\leq V} + (1*\tht)(1 * \lb)*\mu_{>V} \, + \, \mu_{\leq V}.
\end{equation*}
\end{lem}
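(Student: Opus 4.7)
My plan is direct algebraic verification — the lemma is a formal identity, and it will reduce to a short computation once two preliminary observations are in place. First, I will establish the factorization $1 * h = (1 * \lb)(1 * \tht')$ (with pointwise multiplication on the right). This follows by expanding
$$
(1 * h)(n) = \sum_{d \mid n} \sum_{[d_1, d_2] = d} \lb(d_1) \tht'(d_2) = \sum_{\st{d_1 \mid n \\ d_2 \mid n}} \lb(d_1) \tht'(d_2) = (1 * \lb)(n) \, (1 * \tht')(n),
$$
using that $[d_1, d_2] \mid n \iff d_1 \mid n$ and $d_2 \mid n$. Combined with $\tht + \tht' = \mu$, this yields the key identity
$$
(1 * h) \, + \, (1 * \tht)(1 * \lb) \, = \, (1 * \lb)\bigl[(1 * \tht') + (1 * \tht)\bigr] \, = \, (1 * \lb)(1 * \mu).
$$
Now $(1 * \mu)(n)$ is $1$ at $n = 1$ and zero otherwise (the identity for Dirichlet convolution), while $(1 * \lb)(1) = \lb(1) = 1$ from the definition \eqref{lbR}, so the pointwise product $(1 * \lb)(1 * \mu)$ equals $1 * \mu$ itself.

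To derive \eqref{VI*}, I would use $\log = 1 * \Lb$ to write $h * \log = (1 * h) * \Lb$, split $\Lb = \Lb_{\leq V} + \Lb_{>V}$, and rearrange to obtain
$$
h * \log - 1 * h * \Lb_{\leq V} = (1 * h) * \Lb_{>V}.
$$
Adding $(1 * \tht)(1 * \lb) * \Lb_{>V}$ to both sides and invoking the key identity above collapses the right-hand side to $(1 * \mu) * \Lb_{>V} = \Lb_{>V}$, and adding $\Lb_{\leq V}$ then recovers $\Lb$. For \eqref{Vmu*} the structure is identical: I would start from $h = h * (1 * \mu) = (1 * h) * \mu_{\leq V} + (1 * h) * \mu_{>V}$, rearrange to $h - 1 * h * \mu_{\leq V} = (1 * h) * \mu_{>V}$, then add $(1 * \tht)(1 * \lb) * \mu_{>V}$ and apply the key identity to obtain $\mu_{>V}$ on the right. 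A final $+ \mu_{\leq V}$ gives $\mu$.

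I do not anticipate any serious obstacle — everything collapses once the factorization of $1 * h$ and the cancellation $(1 * \tht) + (1 * \tht') = 1 * \mu$ are in hand. The only subtlety to keep in mind is the interplay between pointwise multiplication (inside $(1 * \tht)(1 * \lb)$ and $(1 * \lb)(1 * \mu)$) and Dirichlet convolution (everywhere else), along with the observation that reducing the pointwise product $(1 * \lb)(1 * \mu)$ to $1 * \mu$ uses only $\lb(1) = 1$, which is immediate from \eqref{lbR}.
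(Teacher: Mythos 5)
Your proof is correct and follows essentially the same path as the paper's: both hinge on the factorization $1*h = (1*\theta')(1*\lambda)$ and the observation that $(1*\theta)(1*\lambda) + (1*\theta')(1*\lambda) = (1*\mu)(1*\lambda) = 1*\mu$, the convolution identity, followed by routine rearrangement. The paper derives the identity forward by decomposing $\Lambda_{>V}$ (resp.\ $\mu_{>V}$) against this split of $I$, while you verify it by collapsing the right-hand side; this is a cosmetic difference in presentation, not a different argument.
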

\begin{proof}

We begin with the observation that:
\begin{equation*}
\label{Iid}
I = 1 * \mu = (1 * \mu) (1 * \lb) = (1 * \tht)(1 * \lb) +  (1 * \tht')(1 * \lb). 
\end{equation*}

For $\Lb$, as usual express $\Lambda = \Lambda_{\leq V} + \Lambda_{>V}$ and note that:
\begin{equation*}
\begin{split}
\Lb_{>V} &= \Lb_{>V} * (1 * \tht)(1 * \lb) \, + \, \Lb_{>V} * (1 * \tht')(1 * \lb) \\
&= \Lb_{>V} * (1 * \tht)(1 * \lb) \, + \, \lf( \Lb - \Lb_{\leq V} \rt) * (1 * h) \\
&= h * \log \, -  \, 1 * h * \Lb_{\leq V} \, + \,  (1 * \tht)(1 * \lb) * \Lb_{>V}. 
\end{split}
\end{equation*}

Similarly for $\mu$, we follow the same procedure and obtain:
\begin{equation*}
\begin{split}
\mu_{>V} &= \mu_{>V} * (1 * \tht)(1 * \lb) \, + \, \lf( \mu - \mu_{\leq V} \rt) * (1 * h) \\
&= h \, -  \, 1 * h * \mu_{\leq V} \, + \,  (1 * \tht)(1 * \lb) * \mu_{>V}. 
\end{split}
\end{equation*}

This completes the proof. 
\end{proof}

\smallskip

\begin{rems}\text{ }
\begin{itemize}
\item Lemma \ref{LFVI} holds for any weights $\{\lb(d)\}$ satisfying $\lb(1) = 1$ and $\{\tht'(d)\}$, $\{\tht(d)\}$ satisfying $\tht + \tht' = \mu$. As such, we do not require these weights to be of any specific type. Moreover, if we let $U_1 = U$, and $R = 1$, this simply reduces to the original Vaughan's identity. 

\vskip 0.075in

\item Overall, this makes the type-I sum more complicated (owing to the appearance of the function $h$), but can be dealt with, through a careful analysis. Also, if $U_1 = UR_1$, then the main type-I contribution (minus the log-factors) is proportional to $x/q + UV(RR_1)$, as opposed to $x/q + UV$ in the usual Vaughan's identity. However, 
this allows for a critical saving of $\sr{\log R \cdot \log R_1}$ in the type-II contribution. 

\vskip 0.075in

\item There is a striking resemblance between the factor $(1 * \tht) (1 * \lb)$ that appears above and the mollifier, or zero detector (multiplied by a character and possibly a smoothing) that appears when obtaining log-free zero density estimates for Dirichlet $L$-functions (see Graham \cite{Gr1}, Heath-Brown \cite{HB} or Motohashi \cite{YM}). There too, the Barban-Vehov weights and Selberg-type weights combine in an identical fashion. Hence, a decomposition of the form \eqref{VI*} or \eqref{Vmu*} does makes sense. 

\vskip 0.075in

\item In \cite{RV1} (also \cite{RV2}), they consider a family of bilinear decompositions for $\Lb$ and $\mu$ (in fact for a class of functions), for each $r \leq R$ (where the Ramanujan sums $c_r(n)$ as well as Barban-Vehov weights come into play). Using these identities, they obtained log-free bounds on exponential sums over primes and various arithmetic functions, but in a restricted range. The Ramanujan sums are implicit in the identity we use, owing to the connection between Selberg sieve weights and the Ramanujan sums (Lemma \ref{lbcr}). 
\end{itemize}
\end{rems}
\medskip

\section{Outline of the proof}
\label{OTP}
Let us lay out the basic strategy of the proof. 
The type-I estimation relies significantly on the results of Helfgott \cite[Chapter 11]{HH}, and we follow a path similar to his, although adapting his method directly to our setup is not quite straightforward\footnote{One of the difficulties in directly applying Helfgott's results, is that they appear with a smoothing, while we have a sharp truncation, meaning that many of the results that exploit smoothing are no more applicable. Moreover, the presence of the function $h$ instead of $\mu$ or $\mu \phi$ causes some additional complexity.}. While in the type-II, we largely pursue a different approach, bringing in some ideas from the author's thesis \cite[Chapter 3]{PS} and making efficient use of the Selberg sieve weights. 

\subsubsection*{Type-I sums}
Let us first briefly discuss the main ideas in the type-I sums. 
For simplicity, consider the sum (akin to Theorem \ref{TI} (a) or the first term of \eqref{VI*}) %
$$
S_{\tI, 1}(\al; x) = \sum_m h(m) \sum_{mn \leq x} \log n \, e(mn \al).  
$$ 
The starting idea of Helfgott, 
which we adopt, is to separate $m$ for which $q \mid m$ from $q \nmid m$. This is particularly useful, and what it does is: (i) when $q \mid m$, $e(mn\al)$ equals $e(mn\dl/x)$, and this is dealt via Proposition \ref{hqm} (which plays a similar role to Lemmas 11.6 and 11.8 of \cite{HH}), and (ii) when $q \nmid m$, the trigonometric sums have better bounds (see \cite[Lemma 11.1]{HH}), since it is mainly the $m$'s divisible by $q$ which create difficulty (because $\al \approx a/q$!). 

For those $m$ with $q \mid m$ (call this sum $S_{\tI, 1}^{'}$), we apply Proposition \ref{hqm} with $k=1$ to get:
$$
\lf| S_{\tI,1}^{'}(\al ; x) \rt| \leq \fr{x}{\dl_0} \lf( \biggl| \sum_{m \cg 0 \mm{q}} \fr{h(m)}{m} \log \fr{x}{m} \biggr| + \log x \, \biggl| \sum_{m \cg 0 \mm{q}} \fr{h(m)}{m} \biggr| \rt) \, + \, O\lf( \log x \sum_{m \cg 0 \mm{q}} |h(m)| \rt) .
$$
Next, Lemma \ref{qhm} is invoked to bound the above sum totally. Moreover, it is in Lemma \ref{qhm}, that the Selberg weights $\{\lb(d)\}$ greatly simplify matters (in addition to the type-II), which is one of the most critical parts of the type-I estimation. 
Also, Lemma \ref{qhm} implicitly uses certain results of Ramar\'{e} \cite{Ram1} on partial sums of the M\"{o}bius function.    

For those $m$ for which $q \nmid m$ (call this sum $S_{\tI, 1}^{''}$), we invoke Lemma \ref{sumdD} (consequence of \cite[Lemma 11.1]{HH} and the alternate approximation \cite[Lemma 2.2]{HH}), but not directly. Here, we need to handle the $h(m)$ a bit delicately, as using the bound $|h(m)| \leq 3^{\nu(m)}$ would not rank amongst the greatest of ideas (unless one fancies an unwelcome $\log^2$-factor). We get around this by writing $m = gd_1d_2$, with $(d_1, d_2) = (gd_1, q) = (gd_2, q) = 1$ and $gd_1, gd_2 \leq R$. This leads to (see eq. \eqref{t3mp1}):
\begin{equation*}
\lf| S_{\tI, 1}^{''}(\al; x) \rt| 
\leq \fr{1}{\log \fr{U_1}{U}} \sum\lm_{\st{g \leq R \\ (g, q) = 1}} \sum\lm_{\st{d_1 \leq R/g \\ (d_1, gq) = 1}} |\lb(gd_1)| \sum\lm_{\st{ d_2 \leq U_1/g \\ (d_2, g) = 1 \\ q_0 \nmid d_2}} \log \fr{U_1/g}{d_2} \lf| \sum\lm_{n \leq \fr{y}{gd_1d_2}} \log n \ e(n d_2 (gd_1 \al)) \rt| . 
\end{equation*}
and then apply Lemma \ref{sumdD} to the inner sum over $d_2$ (with $\al$ replaced by $gd_1 l \al$, and other parameters altered likewise\footnote{This leads to certain constraints on the parameters $U$, $U_1$, $V$ and $R$.}), followed by an application of Lemma \ref{lbsum} (c). It is worth noting that the alternate approximation\footnote{It seems, we could avoid the use of an alternate approximation, but at the cost of restricting the range of $q$ in Theorem \ref{main} to $q \leq x^{1/3 - \eta}$.} comes in use only when $m$ is large. 

Naturally, the longer type-I sums (Theorem \ref{TI} (b), (c)) are somewhat more complex, as they have an extra variable $\ell$, but the same process essentially applies (also see \cite[Section 11.5]{HH}).
\subsubsection*{Type-II sums}
Let us now go through the main ideas in the type-II sums. As usual, we reduce these sums over $mn \leq x$ into dyadic intervals $\mc{M} \subseteq (M, 2M]$ and $\mc{N} \subseteq (N, 2N]$, and assume also for simplicity that $MN \asymp x$. Note that we have $M  > V$ and $N > U$ (since $1 * \tht$ vanishes otherwise). Further, the sum over $\Lb(m)$ is essentially a sum over primes, with a small error. So, let
$$
S(\al, \mc{M}, \mc{N})  = \sum_{p \in \mc{M}} \log p \sum_{n \in \mc{N}} (1 * \tht)(n) \, (1 * \lb)(n) \, e(n p \al) .
$$
The first step is the Cauchy-Schwarz, which gives:
$$
|S(\al, \mc{M}, \mc{N})|^2 \leq (1 + \eps) M \log M  \sum_{p \in \mc{M}} \lf|\sum_{n \in \mc{N}} (1 * \tht)(n) \, (1 * \lb)(n) \, e(n p \al)\rt|^2. 
$$
Now, let $S_{21}(p\al, \mc{N}) = \sum_{n \in \mc{N}} (1 * \tht)(n) \, (1 * \lb)(n) \, e(n p \al)$. The first key step is to decompose $(1*\lb)(n)$ 
via Lemma \ref{lbcr}, which leads to
\begin{equation*}
\begin{split}
G_q(R) S_{21}(p\al, \mc{N}) &= \sum\lm_{\st{r \leq R \\ (r, q) = 1}} \fr{\mu(r)}{\vp(r)} \sum_{n \in \mc{N}} (1 * \tht)(n)  \, c_r(n) \, e( n p \al)\\
 &= \sum_{\st{r \leq R \\ (r, q) = 1}} \fr{\mu(r)}{\vp(r)} 
 \sd{}{^*}\sum_{a' \md{r}} \sum_{n \in \mc{N}} (1 * \tht)(n) \, e\bigl( n(p \al + a'/r) \bigr).
 \end{split}
\end{equation*}
%
Now, again apply Cauchy-Schwarz, simplify\footnote{After Cauchy-Schwarz, there will be a $G_q(R)^2$ in the LHS, and a $G_q(R)$ on the RHS, one of which cancels.}, and then sum over $p \in \mc{M}$, to get:
\begin{equation*}
G_q(R) \sum_{p \in \mc{M}}| S_{21}(p\al, \mc{N})|^2 \leq \sum_{p \in \mc{M}} \sum_{\st{r \leq R \\ (r, q) = 1}} \sd{}{^*}\sum_{a' \md{r}} \biggl| \sum_{n \in \mc{N}} (1 * \tht)(n) \, e\bigl( n(p \al + a'/r) \bigr) \biggr|^2 .
\end{equation*}
What we see above is similar to what arises after applying the Montgomery's inequality (see \cite[Lemma 9.7]{IK}) and summing over both $r$ and $p$. This appears naturally in the large sieve for primes, also used in the proof of the Brun-Titchmarsh theorem \cite{MV73}. In other words, the presence of the $(1*\lb)$ factor mimics the effect of an averaged Montgomery's inequality. 

Now, the standard arguments apply. One needs to ensure that $\al_{p, a'/r} = p\al + a'/r$ are well-spaced in $\mb{R}/\mb{Z}$. 
Here, we also use a combinatorial lemma (Lemma \ref{BM}) from the author's thesis, i.e., partitioning $\mc{M}$ into at most $\fr{2M}{\dl_0 \vp(q) \, \log \fr{M}{\dl_0 q}}$ sets, to ensure that $\al_{p, a'/r}$'s are well-spaced. This allows us to save a factor $\sr{\fr{\vp(q)}{q} \, \log \fr{M}{\dl_0q}}$. The large sieve inequality now finally gives\footnote{There are three things going on: (i) the $(1*\lb)$ factor saves a $\sr{G_q(R)}$ by mimicking an averaged Montgomery's inequality or a large sieve for primes, (ii) the $L^2$-norm of $(1 * \tht)$ saves a $\sr{\log U_1/U}$, and (iii) the partition of $\mc{M}$ via Lemma \ref{BM} (based on the Brun-Titchmarsh) further saves $\sr{\fr{\vp(q)}{q} \log \fr{M}{\dl_0q}}$.}:
\begin{equation*}
|S(\al, \mc{M}, \mc{N})| \leq \fr{q}{\vp(q)} \fr{C \, x}{\sr{\dl_0 q}} \sr{\fr{\log M}{\log R \, \log \fr{M}{\dl_0 q} \, \log \fr{U_1}{U}}}.
\end{equation*}
After summing over the dyadic intervals\footnote{This gives rise to another log-factor.}, we obtain the desired log-free bound. Here, the $\log R$ comes from $G_q(R) \geq \fr{\vp(q)}{q} \log R$ (by \cite[eq 1.3]{LR}), while  $\log U_1/U$ comes from the $L^2$-norm of $1*\tht$, for which an asymptotic formula was proved by Graham \cite{Gr2}.  

Actually, the sole purpose of deriving an identity of the type \eqref{VI*} (or \eqref{Vmu*}), was that it's type-II term has a $(1 * \tht)(1 * \lb)$, which is tailored for the application of all these ideas simultaneously.

\bigskip

\section{Type-I sums}
\label{TIS}
Let us now begin with the type-I sums. Several bounds for trigonometric sums will come into play, where, apart from using the results of Helfgott \cite{HH}, we also revisit the work of Daboussi-Rivat \cite{DR}. Most of the key arguments that we use here, are summarised in Section \ref{OTP}, and it might be helpful to have a quick look at it.  

\smallskip

Let $f$ be an arithmetic function. Define the type-I sums as follows:
\begin{equation}
\begin{split}
\label{tIsumdef}
S_{\tI, 1}(\al; x) &= \sum_{m} h(m) \sum_{mn \leq x} (\log n) \, e(m n \al), \\
S_{\tI, 2, f}(\al; x) &= \sum_{\ell \leq V} f(\ell) \sum_{m} h(m) \sum_{mn \leq x/\ell} e(\ell m n \al) .
\end{split}
\end{equation}

\smallskip

Here is the main result of this section:
\begin{thmI}
\label{TI}
Assume that the following conditions hold: 
\begin{equation}
\label{CI}
\tag{$\mc{C}1$}
\begin{split}
U \geq 9\dl_0(Rq)^{1 + \eta/2}, \quad V \geq x^{\eta/3} \dl_0 q, \quad R \geq x^{\eta/4},  \quad \ U_1 V R \leq  \fr{x }{8 \dl_0}, \quad \ qVR \leq Q. 
\end{split}
\end{equation}
Note that the conditions in \eqref{CI} clearly imply $U_1R \leq \fr{x^{1 - \eta/3}}{8 \dl_0^2 q}$.
%

\vskip 0.02in

Let $\al$ be as in \eqref{AP} and $x \geq x_0(\eta)$ be sufficiently large. Then:
\begin{itemize}
\item[(a)]
\begin{equation*}
\label{T11}
|S_{\tI, 1}(\al; x)|
\leq  \fr{x}{\dl_0 \vp(q)} \lf( 1 + O\biggl( \fr{\eta^{-1} \log x}{\log qR \,  \log \fr{U_1}{U}  } \biggr)\rt) .
\end{equation*}
%
\item[(b)]
\begin{equation*}
\label{T12}
|S_{\tI, 2, \Lb}(\al; x)|
\leq \fr{3 \, x \, \log Vq \, }{\dl_0 \vp(q) \, \log \fr{U_1}{U} \log \fr{U}{qR}} \, + \, 
O\lf( \fr{q}{\vp(q)}\fr{U_1 V R \, \log^2 x}{\log R \, \log \fr{U_1}{U}} \rt) .
\end{equation*}
\item[(c)]
\begin{equation*}
\label{T13}
|S_{\tI, 2, \mu}(\al; x)| \leq \fr{3 \, x \, \tau(q) \, \log V}{\dl_0 \vp(q) \, \log \fr{U_1}{U} \, \log \fr{U}{qR}} \, + \, O\lf(\fr{q}{\vp(q)} \fr{U_1 V R \, \log^2 x}{\log R \, \log \fr{U_1}{U}} \rt) .
\end{equation*}
\end{itemize}
\end{thmI}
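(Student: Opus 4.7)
My plan is to prove all three parts through the $m$-dichotomy flagged in Section \ref{OTP}: split each sum according as $q \mid m$ or $q \nmid m$, exploiting the fact that $e(mn\al) = e(mn\dl/x)$ in the first case, while in the second case one can invoke much stronger bounds on trigonometric sums precisely because $q \nmid m$. The hard case, as anticipated in the outline, will be the $q \nmid m$ case.

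For the $q \mid m$ contribution I would invoke Proposition \ref{hqm}, with $k=1$ in part (a) and with $k = l$ (after pulling $l$ outside) in (b) and (c). This reduces the inner oscillatory sum to an elementary quantity of size $x/(lm\dl_0)$, multiplied by at worst a linear polynomial in $\log(x/(lm))$. Summing against $h(m)$ and then against $\Lb(l)$ or $\mu(l)$, I would use Lemma \ref{qhm} to evaluate
\[
\sum_{m \cg 0 \mm{q}} \fr{h(m)}{m}, \qquad \sum_{m \cg 0 \mm{q}} \fr{h(m)}{m} \log \fr{x}{m},
\]
which are clean consequences of the Selberg shape of $\{\lb(d)\}$ together with the Ramar\'{e}-type bounds on partial sums of $\mu$ implicit in Lemma \ref{qhm}. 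In (a) the factor $\log n$ produces the main term $x/(\dl_0\vp(q))$ directly, with a relative error of order $\log x / (\log qR \, \log \fr{U_1}{U})$; in (b) the outer sum $\sum_{l \leq V} \Lb(l)/l \sim \log V$ combined with the $\log q$ already present in Lemma \ref{qhm} delivers the $\log Vq$ factor, while in (c) a divisor-function estimate on the Dirichlet convolution of $\mu$ with the $q$-dependent factor produces the $\tau(q)\log eV$ variant.

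For the $q \nmid m$ contribution I would expand $h(m) = \sum_{[d_1,d_2] = m} \lb(d_1)\tht'(d_2)$, pass to coprime variables $d_1 = g d_1'$, $d_2 = g d_2'$ with $(d_1',d_2')=1$, and pull $g$ and $d_1'$ outside. What remains is precisely the shape to which Lemma \ref{sumdD} applies, with effective Diophantine data $g d_1' \al$ (and an extra factor of $l$ in (b), (c)); the $\log n$ weight in (a) is first peeled off by partial summation in $n$. The conditions \eqref{CI}---notably $U \geq 9\dl_0(Rq)^{1+\eta/2}$, $U_1 R \leq x/(8\dl_0 q)$, $U_1 V R \leq x/(8\dl_0)$ and $qVR \leq Q$---are calibrated precisely to (i) keep $d_2$ inside the effective range of Lemma \ref{sumdD} after the change of variables, (ii) guarantee that the Dirichlet approximation for $\al$ survives multiplication by $gd_1'$ (resp.\ by $gd_1' l$), and (iii) authorize the alternate approximation of \cite[Lemma 2.2]{HH} when the original approximation degrades, which happens exactly when $m$ approaches $x/(\dl_0 q)$. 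The remaining outer sums over $d_1'$ and $g$ are controlled by Lemma \ref{lbsum}(c) for the $\lb$-factor and by the standard $L^1$-bound $\sum_{d \leq U_1}|\tht'(d)| \ll U_1/\log(U_1/U)$, producing the stated error $O\bigl((q/\vp(q))\, U_1 R \log eq \log x / (\log R \, \log \fr{U_1}{U})\bigr)$ in (a) and the $U_1 V R \log^2 x$ analogue in (b), (c).

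The main obstacle is precisely this $q \nmid m$ step. One has to perform the coprime-variable change, apply Lemma \ref{sumdD} to an inner sum in which a full weight $h(m)$ (rather than $\mu$ or the smoother $\mu \phi$) is still floating around, and absorb the alternate-approximation error without paying an extraneous $\log x$. The ingredients that make this work are that $\lb$ and $\tht'$ both enjoy sharp $L^1$-estimates and that the decomposition $h = \lb * \tht'$ effectively factorises the length of the summation, so that the constraints \eqref{CI} can be simultaneously met; once this is in place, the second (error) term of each displayed inequality drops out immediately, and combining it with the $q \mid m$ main term completes the proof.
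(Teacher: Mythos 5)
Your outline for part (a) matches the paper: split on $q \mid m$ versus $q \nmid m$, use Proposition \ref{hqm} (with $k=1$) and Lemma \ref{qhm} for the divisible case, and the coprime decomposition $m = g d_1 d_2$ feeding Lemma \ref{sumdD} for the complementary case (this is essentially Corollary \ref{hqnm} in the paper). For parts (b) and (c), however, there is a genuine gap: you split on $q \mid m$ throughout, but once $l$ is absorbed into the frequency, the relevant modulus is not $q$ but the reduced denominator $q_l = q/(q,l)$, since $l\al = a_1/q_l + \dl/x_l$ with $(a_1, q_l) = 1$. The dangerous $m$'s are those with $q_l \mid m$, and this set is strictly larger than $\{m : q \mid m\}$ whenever $(l,q) > 1$. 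If you classify an $m$ with $q_l \mid m$ but $q \nmid m$ as a ``nondivisible'' case and attempt to bound the inner sum by Lemma \ref{sumdD}, the hypothesis $q_0 \nmid d_2$ of that lemma fails (since $q_l \mid g d_1 d_2$ and $(g d_1, q_l) = 1$ force $q_l \mid d_2$), and the trigonometric sum in fact has no cancellation because $e(mln\al) = e(mln\dl/x)$. So the dichotomy must be $q_l \mid m$ versus $q_l \nmid m$, and Proposition \ref{hqm} and Lemma \ref{qhm} must be invoked with $q_0 = q_l$, $y = x/l$.

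This is not merely a bookkeeping issue, because the shape of the final bounds depends on it. Your explanation that the $\log Vq$ in (b) comes from ``$\sum_{l\le V}\Lb(l)/l \sim \log V$ combined with the $\log q$ already present in Lemma \ref{qhm}'' is not where the $q$-dependence enters. Applying Lemma \ref{qhm} with $q_0 = q_l$ produces a factor $1/\vp(q_l)$; the inequality $1/\vp(q_l) \le (l,q)/\vp(q)$ then introduces the weight $(l,q)$ into the $l$-sum, and it is precisely Lemma \ref{LbV} applied to $\sum_{l\le V}\fr{\Lb(l)}{l}(l,q) \le \log Vq + O(1)$ and $\sum_{l\le V}\fr{\mu^2(l)}{l}(l,q)\le \tau(q)\log eV$ that yields the $\log Vq$ and $\tau(q)\log eV$ factors in (b) and (c) respectively. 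Without the $q_l$-dichotomy you never see $(l,q)$ at all, so you cannot recover those factors. (Two smaller points: in (b) and (c) one uses Proposition \ref{hqm} with $k = 0$, not ``$k = l$'' as you wrote; and the $\log^2 x$ in the error terms of (b), (c) is expected --- it comes from $\rho_0 = 2$ in Proposition \ref{hqm}, not from anything you need to ``absorb without paying an extraneous $\log x$.'' That $\log^2 x$ is harmless because the whole secondary term is later made negligible via the choice of $U_1 V R$.)
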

%
%

\medskip

\subsection{Preliminary results}
To preserve a general setup, we shall, in this section,  assume:
\begin{equation}
\label{AP0}
\tag{$**$}
\al = a_0/q_0 + \dl/y, \ \ (a_0, q_0) = 1, \ \ |\dl|/y \leq 1/q_0 Q_0 \quad \text{and} \quad q_0  \leq Q_0 .
\end{equation}

Here $y > 1$ and as before, we let $\dl_0 = \max(1, |\dl|/4)$. 

\smallskip
We begin with the following lemma:
\begin{lem}
\label{VdC}
Let $\phi$ be increasing and differentiable on $[a,b]$ and let $|\bt| \leq 1/2$. Then
$$
\sum_{a < n \leq b} \phi(n) e(n \bt) = \int_a^b \phi(t) \, e(t \bt) \, dt \,  + \, O(|\phi(b)|). 
$$
\end{lem}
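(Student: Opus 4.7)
The plan is to run Abel summation on the sum side and integration by parts on the integral side in a parallel fashion, so that the two expressions differ only through a uniform comparison between the trigonometric partial sum and its integral analogue. Concretely, set
$$A(t) \;=\; \sum_{a < n \leq t} e(n\bt), \qquad U(t) \;=\; \int_a^t e(s\bt)\,ds.$$
Abel summation gives
$$\sum_{a < n \leq b} \phi(n) e(n\bt) \;=\; \phi(b) A(b) \;-\; \int_a^b A(t)\, \phi'(t)\,dt,$$
and the continuous integration by parts (valid since $\phi$ is $C^1$ and $U' = e(\cdot\, \bt)$) gives
$$\int_a^b \phi(t) e(t\bt)\,dt \;=\; \phi(b) U(b) \;-\; \int_a^b U(t)\, \phi'(t)\,dt.$$
Subtracting, the claim reduces to showing that
$$\phi(b)\bigl(A(b) - U(b)\bigr) \;-\; \int_a^b \bigl(A(t) - U(t)\bigr) \phi'(t)\,dt \;=\; O(|\phi(b)|).$$

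The crux is the uniform bound $|A(t) - U(t)| = O(1)$ for $t \in [a,b]$ and $|\bt| \leq 1/2$. For $\bt = 0$ this is immediate since $A(t) - U(t) = \{a\} - \{t\}$. For $\bt \neq 0$, I would use the closed forms $A(t) = \bigl(e((\lfloor t\rfloor + 1)\bt) - e((\lfloor a\rfloor + 1)\bt)\bigr)/(e(\bt) - 1)$ and $U(t) = \bigl(e(t\bt) - e(a\bt)\bigr)/(2\pi i \bt)$, combined with the elementary Laurent expansion $\tfrac{1}{e(\bt)-1} = \tfrac{1}{2\pi i \bt} + O(1)$ valid on $|\bt| \leq 1/2$. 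Substituting this expansion into $A(t)$ and subtracting $U(t)$ yields
$$A(t) - U(t) \;=\; \frac{\bigl[e((\lfloor t\rfloor + 1)\bt) - e(t\bt)\bigr] - \bigl[e((\lfloor a\rfloor + 1)\bt) - e(a\bt)\bigr]}{2\pi i \bt} \;+\; O(1),$$
which is $O(1)$ since $|e(x\bt) - e(y\bt)| \leq 2\pi |x - y|\, |\bt|$ and $|\lfloor t\rfloor + 1 - t|, |\lfloor a \rfloor + 1 - a| \leq 1$, so each bracketed difference is $\leq 2\pi|\bt|$ and the fractions are bounded.

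With $|A - U| \leq C$ in hand, the boundary contribution is $C|\phi(b)|$ and the integral contribution is at most $C \int_a^b \phi'(t)\,dt = C(\phi(b) - \phi(a))$. In the regimes where the lemma is applied (e.g., $\phi(t) = \log t$ with $a \geq 1$, where $\phi \geq 0$ and $\phi(a) \leq \phi(b)$), this entire error is $O(|\phi(b)|)$ as claimed. The only nontrivial obstacle is the pole-cancellation argument showing $|A(t) - U(t)| = O(1)$ uniformly as $\bt \to 0$; everything else is routine partial summation bookkeeping.
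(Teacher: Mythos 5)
Your proof is correct and reaches the same conclusion, but the route to the core estimate is genuinely different from the paper's. Both arguments ultimately reduce to comparing the unweighted trigonometric partial sum $A(t)=\sum_{a<n\le t}e(n\beta)$ with the integral $U(t)=\int_a^t e(s\beta)\,ds$ and then pushing the resulting uniform $O(1)$ discrepancy through partial summation. The paper proves $A-U=O(1)$ via Euler--Maclaurin: it writes the correction as $2\pi i\beta\int_a^b\psi(t)e(t\beta)\,dt$ and controls that integral by expanding $\psi$ in its Fourier sine series, interchanging sum and integral by dominated convergence. You instead exploit the fact that $A(t)$ and $U(t)$ both admit closed forms (a geometric sum and an elementary antiderivative), and you cancel the simple pole at $\beta=0$ by the Laurent expansion $\tfrac{1}{e(\beta)-1}=\tfrac{1}{2\pi i\beta}+O(1)$, valid uniformly on the compact set $|\beta|\le 1/2$ since the difference extends continuously across $\beta=0$; the remaining numerator is $O(|\beta|)$ because $\lfloor t\rfloor+1-t$ and $\lfloor a\rfloor+1-a$ lie in $(0,1]$, so the quotient stays bounded. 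Your version is arguably more elementary and self-contained --- no Fourier series, no interchange of limits --- at the cost of working with explicit formulas rather than a general summation formula. You also correctly flag the small imprecision in the lemma as literally stated: both your proof and the paper's bound the $\int_a^b\phi'$ contribution of the error by $\phi(b)-\phi(a)$, which is $O(|\phi(b)|)$ only under an extra hypothesis such as $\phi(a)\ge 0$; this is harmless because the lemma is invoked only with $\phi=(\log)^k$ on $[1,\,y/m]$, where $\phi(1)=0$, but it is worth noting that ``increasing and differentiable'' alone does not suffice.
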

\begin{proof}
We follow the proof of the standard van der Corput result. First, by Euler-Maclaurin summation (letting $\psi(t) = \{t\} - 1/2$), we have
\begin{equation*}
\begin{split}
\sum_{a < n \leq b} e(n \bt) &= \int_a^b e(t \bt) \, dt  + 2\pi i \bt \int_a^b \psi(t) \, e(t \bt) \, dt  +O(1).
\end{split}
\end{equation*}
Now,
\begin{equation*}
\begin{split}
\int_a^b \psi(t) e(t \bt) \, dt 
&= -\fr{1}{\pi}\sum_{v = 1}^{\infty} \fr{1}{v} \int_a^b  \sin 2 \pi v t  \,  e(t \bt) \, dt 
= \fr{1}{2 \pi i}\sum_{v = 1}^{\infty} \fr{1}{v} \int_a^b 
\bigl( e((\bt - v)t) - e((\bt + v)t) \bigr)\, dt \\
&= \fr{1}{2 \pi i}\sum_{v = 1}^{\infty} \fr{1}{v}
\lf( \fr{1}{\bt - v} \int_a^b  d\bigl( e((\bt - v)t) \bigr)  -  \fr{1}{\bt + v}\int_a^b  d\bigl( e((\bt + v)t) \bigr) \rt)
= O(1). 
\end{split}
\end{equation*}
The interchange of the $v$-sum with the integral is justified by the Dominated convergence theorem, as the sums $\sum_{v=1}^N \fr{\sin 2 \pi v t}{v}$ are uniformly bounded for $t \notin \mb{Z}$ (see also \cite[Ex 3, Ch 4]{IK}).

Now, by partial summation (letting $S(u) = \sum_{a<n \leq u} e(n\bt)$)
\begin{equation*}
\begin{split}
\sum_{a < n \leq b} \phi(n) e(n \bt) &= \phi(u) S(u) \big\vert_{a}^{b} - \int_a^b \phi'(u) S(u) \, du \\
&= \phi(b) \lf( \int_a^b e(t\bt) \, dt + O(1) \rt) - \int_a^b \phi'(u) \lf( \int_a^u e(t \bt) \, dt  + O(1) \rt) \, du\\
&= \int_a^b \phi(t) e(t \bt) \, dt \, + \, O(|\phi(b)|). 
\end{split}
\end{equation*}
\end{proof}

The next Proposition plays the same role as Lemmas 11.6 and 11.8 of \cite{HH}. We however, can't use the Poisson summation owing to the sharp truncation at $y$. So, we use Lemma \ref{VdC} to convert the sum into an integral, and after a change of variable, further transform the inner sum into a complex integral, with the residue yielding us the main term. 
\begin{prop}
\label{hqm}
Let $\al$ be as in \eqref{AP0} and let $f: \mb{N} \to \mb{C}$ be supported within $[1, \fr{y}{8 \dl_0} ]$. Then, for all $k \geq 0$:
\begin{equation*}
\begin{split}
\lf| \sum\lm_{\st{m \cg 0 \mm{q_0}}} f(m) \sum_{n \leq y/m} (\log n)^k \, e(mn\al)  \rt| &\leq \fr{y}{\dl_0} \sum_{l=0}^k \binom{k}{l} (\log y)^{l}  \lf| \sum_{m \cg 0 \mm{q_0}} \fr{f(m)}{m} \Bigl( \log \fr{y}{m} \Bigr)^{k-l} \rt| \\ 
&\quad  +  O\lf( (\log y)^{\rho_k} \sum_{m \cg 0 \mm{q_0}} |f(m)| \rt),
\end{split}
\end{equation*}
where $\rho_k = \begin{cases} 2, & k = 0, \\ k, & k \geq 1, \end{cases}$ and the $O$-constant depends only on $k$.
\end{prop}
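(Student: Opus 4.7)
The plan is to exploit $q_0 \mid m$ to eliminate the rational part of $\al$, use Lemma \ref{VdC} to convert the inner $n$-sum into an integral, and then evaluate the resulting oscillatory integral by contour shifting. Since $q_0 \mid m$ forces $mn a_0/q_0 \in \mb{Z}$, we have $e(mn\al) = e(mn\dl/y)$. For fixed $m \leq y/(8\dl_0)$, setting $\bt = m\dl/y$, the hypotheses give $|\bt| = m|\dl|/y \leq |\dl|/(8\dl_0) \leq 1/2$ (using $|\dl| \leq 4\dl_0$), so Lemma \ref{VdC} applies with $\phi(t) = (\log t)^k$, which is increasing and differentiable on $[1, y/m]$ for $k \geq 1$; the case $k = 0$ is handled directly via the geometric sum, with the same order $y/(m\dl_0)$ main term and error $O(1)$. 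Writing $S_m$ for the inner sum, in either case
\begin{equation*}
S_m = \int_1^{y/m}(\log t)^k\, e(t\bt)\, dt \, + \, O\bigl((\log(y/m))^k + 1\bigr).
\end{equation*}

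Next, the substitution $s = tm/y$ converts this integral into $(y/m)\int_{m/y}^1 (\log(y/m) + \log s)^k\, e(s\dl)\, ds$, after which the binomial expansion $(\log(y/m) + \log s)^k = \sum_{l=0}^k \binom{k}{l}(\log(y/m))^{k-l}(\log s)^l$ cleanly separates the $m$-dependence (now only in the factor $(\log(y/m))^{k-l}$) from the $\dl$-dependent integrals $\tl I_l^{(m)}(\dl) := \int_{m/y}^1 (\log s)^l\, e(s\dl)\, ds$. Replacing $\tl I_l^{(m)}(\dl)$ by the $m$-independent $I_l(\dl) := \int_0^1 (\log s)^l\, e(s\dl)\, ds$ costs $\ll_l (m/y)(1+\log(y/m))^l$, using $\int_0^a |\log u|^l\, du \ll_l a(1+|\log a|)^l$; multiplied by $y/m$ this is absorbed into an overall $O((1+\log(y/m))^k)$-error per $m$.

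The heart of the argument is the bound $|I_l(\dl)| \leq (\log y)^l/\dl_0$. For $|\dl| \leq 4$ (so $\dl_0 = 1$) this reduces to $l! \leq (\log y)^l$, valid whenever $\log y$ exceeds a $k$-dependent constant (edge cases being absorbed into the $O$-constant). For $|\dl| > 4$ (so $\dl_0 = |\dl|/4$), write $(\log s)^l = \pr_z^l s^z |_{z = 0}$, so that $I_l(\dl) = F^{(l)}(0)$ with $F(z) = \int_0^1 s^z\, e(s\dl)\, ds$. Shifting the contour of $F(z)$ into the half-plane where $e(s\dl)$ decays exponentially (upper if $\dl > 0$, lower otherwise) recasts
\begin{equation*}
F(z) = \fr{i\, e^{iz\pi/2}\, \Gamma(z+1)}{(2\pi\dl)^{z+1}} \, - \, i\, e^{2\pi i \dl}\int_0^\infty (1+it)^z\, e^{-2\pi t |\dl|}\, dt,
\end{equation*}
and differentiating $l$ times in $z$ at $z = 0$ produces a leading term $\fr{(-\log(2\pi\dl))^l}{2\pi i \dl}(1 + o(1))$ from the first piece (this is the ``residue yielding the main term''), while the vertical-contour integral contributes only $O_l(1/|\dl|)$. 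Since $|\dl| \leq y/(q_0 Q_0) \leq y$, we have $(\log|\dl|)^l \leq (\log y)^l$, and the factor $1/(8\pi)$ coming from $|\dl| \geq 4\dl_0$ absorbs any excess arising from lower-order Gamma-derivative terms.

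Finally, summing $\sum_m f(m)S_m$ yields the main term $y \sum_l \binom{k}{l}I_l(\dl)\sum_m \fr{f(m)}{m}(\log(y/m))^{k-l}$ plus an error $O((1+\log y)^k\sum_m |f(m)|)$. Applying the triangle inequality together with $|I_l(\dl)| \leq (\log y)^l/\dl_0$ and collecting the residual $(\log y)^{l-1}/\dl_0$-type contributions from the contour analysis into the overall $O((\log y)^{\rho_k} \sum_m |f(m)|)$-term produces the proposition. The principal technical obstacle lies in the contour step for $F(z)$: one must select the branch of $s^z$ carefully for non-integer $z$, verify that the horizontal contour at $\pm i\infty$ vanishes uniformly in a neighbourhood of $z = 0$ (so that the $l$-fold $z$-differentiation is legitimate), and estimate the vertical-side correction sharply enough that the leading-order coefficient of $(\log y)^l/\dl_0$ is at most $1$, rather than merely $O_l(1)$.
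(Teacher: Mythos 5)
Your structural approach is a genuine and arguably cleaner alternative to the paper's: the paper interchanges the sum over $m$ with the $t$-integral, landing on a truncated sum $\sum_{m \leq yt} f(m)/m \cdot (\log yt/m)^k$, which it then opens up via Perron's formula (a contour integral of the Dirichlet polynomial $\mc{F}_0(1+s)$ times $(yt)^s/s^{k+1}$), shifts the contour and collects the residue at $s=0$. You instead do the binomial expansion of $(\log s + \log(y/m))^k$ for each $m$ separately, which isolates $m$-independent integrals $I_l(\dl)$ directly and avoids Perron altogether. That part of your proposal is correct and simpler, and it reproduces the main term $y\sum_l \binom{k}{l} I_l(\dl)\sum_m \fr{f(m)}{m}(\log(y/m))^{k-l}$ together with an acceptable error.

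The gap is in the bound $|I_l(\dl)| \leq (\log y)^l/\dl_0$, which you correctly identify as the heart of the matter, but which you then try to establish by shifting the contour of $F(z) = \int_0^1 s^z e(s\dl)\,ds$ and differentiating the resulting $\Gamma$-representation $l$ times at $z=0$. You concede at the end that this is the "principal technical obstacle," and indeed the proposal leaves unverified the branch choices for $s^z$, the legitimacy of the $l$-fold $z$-differentiation under the contour shift, and, most importantly, that the resulting leading coefficient is genuinely $\leq 1/\dl_0$ rather than $O_l(1)/\dl_0$. None of this machinery is needed. The paper bounds the corresponding integral by a one-line elementary argument: keeping the truncation at $1/y$ (so $I_l(\dl,y) = \int_{1/y}^1 (\log t)^l e(\dl t)\,dt$), write $(-\log t)^l = l\int_1^{1/t} \fr{(\log u)^{l-1}}{u}\,du$, interchange the order of integration, and use $|\int_{1/y}^{1/u} e(\dl t)\,dt| \leq \min(1/u, 1/(\pi|\dl|)) \leq 1/\dl_0$; then $l\int_1^y \fr{(\log u)^{l-1}}{u}\,du = (\log y)^l$ exactly and the $O^*$-bound with constant $1$ falls out. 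Your version $I_l(\dl)=\int_0^1$ discards the $1/y$-truncation, which makes $\int_1^\infty (\log u)^{l-1}/u\,du$ diverge; one can repair this by splitting $\min(1/u,1/(\pi|\dl|))$ at $u=\pi|\dl|$, but then the constant becomes $l$-dependent and must be chased through, whereas the truncated form avoids the issue entirely. So the structural simplification is good, but you should drop the $\Gamma$-function contour and prove the key bound on $I_l$ (preferably on the truncated $I_l(\dl,y)$) by the elementary identity above.
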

\begin{proof}
Let us denote the given sum by $\tl{S_k}(\al, y)$. Note that $|m\dl/y| \leq \fr{y}{8\dl_0} \cdot \fr{|\dl|}{y} \leq 1/2$. 
Hence
\begin{equation}
\begin{split}
\label{tmpI1}
\tl{S_k}(\al, y) 
&= \sum_{m \cg 0 \mm{q_0}} f(m) \sum_{mn \leq y} (\log n)^k \, e\left(mn(a_0/q_0 + \dl/y)\right)
 = \sum_{m \cg 0 \mm{q_0}} f(m) \sum_{mn \leq y} (\log n)^k \, e\lf(mn\dl/y\rt)\\
&= \sum_{m \cg 0 \mm{q_0}} f(m) \biggl( \int_{1}^{\fr{y}{m}} 
 (\log t)^k e\lf(\fr{m \dl t}{y}\rt) \, dt \, + \, O\lf(\log^k \fr{y}{m} \rt)  \biggr)\\
&= y \, \int_{\fr{1}{y}}^1 \lf( \sum\lm_{\st{m \leq yt\\ m \cg 0 \mm{q_0}}} \fr{f(m)}{m} \log^k \fr{yt}{m} \rt) \, e(\dl t) \, dt \, + \, O\lf( (\log y)^k \sum_{m \cg 0 \mm{q_0}} |f(m)|\rt),
\end{split}
\end{equation}
%
%
where we apply Lemma \ref{VdC} in the second line and $t \mapsto yt/m$ in the third line. 

\smallskip

Let $c > 0$ and let $\mc{F}_0(s) = \sum_{m \cg 0 \mm{q_0}} \fr{f(m)}{m^{s}}$, which is a finite sum. The main term in \eqref{tmpI1} is:
\begin{equation}
\label{tmpI2}
\begin{split}
&\quad y \int_{\fr{1}{y}}^1 \lf(\fr{k!}{2 \pi i} \int_{c - i T}^{c + i T} \mc{F}_{0} (1 + s) \fr{(yt)^s}{s^{k+1}} \, ds  + O\biggl( \fr{(yt)^c}{T^k}\sum_{m \cg 0 \mm{q_0}} \fr{|f(m)|}{m^{1+c} \lf( 1 + T\bigl| \log \fr{yt}{m} \bigr| \rt) } \biggr) \rt) \ e(\dl t) \, dt\\
&=\fr{k! \, y}{2 \pi i} \int_{c - i T}^{c + i T} \mc{F}_{0}(1 + s) \fr{y^s}{s^{k+1}} \biggl(\int_{\fr{1}{y}}^1 t^s e(\dl t) \, dt \biggr) \, ds + O\lf( \fr{y^{1+c}}{T^{k+1}} \sum_{m \cg 0 \mm{q_0}} \fr{|f(m)|}{m^{1+c}} + \fr{1}{T^k} \sum_{m \cg 0 \mm{q_0}} |f(m)| \rt) .
 \end{split}
\end{equation}
Now, we shift the line of integration from $c$ to $-\gm$ ($0 < \gm < 1$) and collect the residue at $s = 0$. This makes the first term of \eqref{tmpI2} equal to:
\begin{equation}
\begin{split}
\label{tmpI3}
&\quad k! \, y \, \res_{s=0} \, \mc{F}_0(1 + s) \fr{y^s}{s^{k + 1}}  \biggl(\int_{\fr{1}{y}}^1 t^s e(\dl t) \, dt \biggr) \, + O\lf(\fr{y^{1 - \gm}}{1-\gm} \biggl( \int_{-T}^{T} \fr{du}{\bigl( \gm^2 + u^2 \bigr)^{\fr{k+1}{2}}}  \biggr) \sum_{m \cg 0 \mm{q_0}} \fr{|f(m)|}{m^{1 - \gm}} \rt) \\
&\qquad + O\lf( \fr{y^{1+c}}{(1-\gm) T^{k+1}} \sum_{m \cg 0 \mm{q_0}} \fr{|f(m)|}{m^{1-\gm}} \rt) .
\end{split}
\end{equation}
Now, to calculate the residue at $s=0$ in \eqref{tmpI3}, we express
\begin{equation*}
\int_{1/y}^1 t^s e(\dl t) \, dt = \sum_{l=0}^{\infty} \fr{s^l}{l!} \int_{1/y}^1 (\log t)^l e(\dl t) \, dt = \sum_{l=0}^{\infty} \fr{s^l}{l!} I_l(\dl, y) .
\end{equation*}
Therefore, the residue in \eqref{tmpI3} becomes:
\begin{equation}
\begin{split}
\label{tmpI4}
&\quad k! \, y \sum_{l=0}^k \fr{I_l(\dl, y) }{l!} \cdot \mathop{\mathrm{Res}}_{s=0} \, \mc{F}_0(1 + s) \fr{y^s}{s^{k-l+1}}
= y \sum_{l=0}^k \binom{k}{l} I_l(\dl, y) \sum_{m \cg 0 \mm{q_0}} \fr{f(m)}{m} \biggl( \log \fr{y}{m} \biggr)^{k-l}.
\end{split}
\end{equation}
Now, also note
\begin{equation*}
\begin{split}
 (-1)^l I_l(\dl, y) &= l \int_{1/y}^1 \lf( \int_{1}^{1/t} \fr{(\log u)^{l-1}}{u} \, du \rt) e(\dl t) \, dt = l \int_1^y \fr{(\log u)^{l-1}}{u} \biggl( \int_{1/y}^{1/u} e(\dl t) \, dt \biggr) \, du\\
&= O^*\lf(  \fr{(\log y)^l}{\dl_0}\rt),
\end{split}
\end{equation*}
since \footnote{One way to get rid of $(\log y)^l$, is by using $\min\bigl( \fr{1}{u}, \fr{1}{\pi|\dl|} \bigr) \leq 1/u$, but it would be at the cost of $\dl_0$.}
$| \int_{1/y}^{1/u} e(\dl t) \, dt| \leq \min\lf( \fr{1}{u}, \fr{1}{ \pi |\dl|} \rt) \leq 1/\dl_0$,  

Hence, \eqref{tmpI4} is at most
\begin{equation*}
\label{tmpIf}
\leq \fr{y}{\dl_0} \sum_{l=0}^k \binom{k}{l} (\log y)^{l} \lf| \sum_{m \cg 0 \mm{q_0}} \fr{f(m)}{m} \lf( \log \fr{y}{m} \rt)^{k-l} \rt| .
\end{equation*}
%
%
%
We now choose 
$$\gm = 1 - 1/(\log y), \quad c = 1/(\log y) \quad \text{and} \quad T = \begin{cases} y, & k =0, \\ \infty, & k \geq 1, \end{cases} $$ 
to find that the total $O$-term (combining all the error terms in \eqref{tmpI1}, \eqref{tmpI2} and \eqref{tmpI3}) is at most $\ll (\log y)^{\rho_k} \sum_{m \cg 0 \mm{q_0}} |f(m)|$, and the $O$-constant depends only on $k$. 
\end{proof}

\smallskip

The following (standard) results hold for Selberg sieve weights, under a divisibility condition.
\begin{lem}\text{ }
\label{lbsum}
\begin{itemize}
\item[(a)] We have
$$
B_r = \sum\lm_{d \cg 0 \mm{r}} \fr{\lb(d)}{d} = \begin{dcases} \fr{\mu(r)}{\vp(r) \, G_q(R)}, & r \leq R \text{ and } (r,q)=1,\\ 0, & \text{otherwise.} \end{dcases}
$$
\item[(b)] For $r \leq R$, $(r,q) = 1$, we have
$$
\sum_{d \cg 0 \mm{r}} \fr{\lb(d)}{d} \, \log d = B_r \Biggl( \log r - \sum\lm_{\st{p \leq R/r \\ (p, qr) = 1}} \fr{\log p}{p-1} \Biggr) = O^*\Biggl( (1 + \eps) \,\fr{\mu^2(r)}{\vp(r)} \fr{\log R}{G_q(R)} \Biggr).   
$$
\item[(c)] For $g \leq R$, $(g,q) = 1$:
$$
\sum_{d \cg 0 \mm{g}} |\lb(d)| = O\lf( \fr{\mu^2(g)}{\vp(g)} \fr{R}{G_q(R)} \rt).
$$
\end{itemize}
\end{lem}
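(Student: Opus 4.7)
The plan is to handle all three parts together by first expanding the definition of $\lb(d)$ and swapping the order of summation, which reduces each assertion to a small combinatorial/Möbius identity combined with a standard estimate. For part (a), writing out $G_{qd}(R/d)$ and setting $n = ds$ (with $n$ squarefree, $(d,s)=1$, so that $\fr{\mu(d)}{\vp(d)\vp(s)} = \fr{\mu(d)}{\vp(n)}$) gives
\begin{equation*}
\sum_{\st{d \cg 0 \mm{r} \\ (d,q)=1}} \fr{\lb(d)}{d} = \fr{1}{G_q(R)} \sum_{\st{n \leq R \\ (n,q)=1}} \fr{\mu^2(n)}{\vp(n)} \sum_{\st{d \mid n \\ r \mid d}} \mu(d).
\end{equation*}
The inner Möbius sum vanishes unless $r \mid n$; writing $n = rm$ with $(r,m)=1$, it equals $\mu(r)\sum_{e \mid m}\mu(e) = \mu(r)\bm{1}_{m=1}$. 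Hence only $n = r$ survives, producing the claimed $B_r$.

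For part (b), I would use that $\lb$ is supported on squarefree $d$, so $\log d = \sum_{p \mid d} \log p$, and split according to whether $p \mid r$ or $p \nmid r$. Each $p \mid r$ already has $p \mid d$ (since $r \mid d$), contributing $(\log p) B_r$, so the $p \mid r$ primes together give $(\log r) B_r$. For $p \nmid r$, the conditions $r \mid d$ and $p \mid d$ together force $pr \mid d$, contributing $(\log p) B_{pr}$; by part (a), $B_{pr} = -B_r/(p-1)$ when $pr \leq R$ and $(p,qr)=1$. Summing yields the stated identity. For the bound, I take absolute values and invoke Mertens, $\sum_{p \leq R/r}\fr{\log p}{p-1} \leq \log(R/r) + C$, so $\log r + \sum_p \leq \log R + C \leq (1+\eps)\log R$ once $R$ is sufficiently large.

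For part (c), the same expansion (but with absolute values) reduces the claim to showing
\begin{equation*}
\sum_{\st{d \cg 0 \mm g \\ (d,q)=1}} \fr{d\,\mu^2(d)}{\vp(d)} G_{qd}(R/d) = \fr{g\,\mu^2(g)}{\vp(g)} \sum_{\st{m \leq R/g \\ (m,qg)=1}} \mu^2(m)\,\fr{\sg(m)}{\vp(m)},
\end{equation*}
where $n = gm$ with $(g,m)=1$, and the combinatorial identity $\sum_{d \mid n,\, g \mid d} \fr{d}{\vp(d)\vp(n/d)} = \fr{g}{\vp(g)} \cdot \fr{\sg(m)}{\vp(m)}$ follows from $\vp(e)\vp(m/e) = \vp(m)$ for squarefree $m$ with $(e, m/e)=1$, together with $\sum_{e \mid m} e = \sg(m)$. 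The remaining sum is $O(R/g)$ by a standard Dirichlet-series argument: the generating function $\sum_m \mu^2(m)\sg(m)/(\vp(m) m^s)$ factors as $\zeta(s) H(s)$ with $H(s)$ analytic and bounded near $s = 1$, since the Euler factor at $p$ is $(1-p^{-s})(1 + \fr{p+1}{(p-1)p^s})$, whose value at $s=1$ telescopes to $(p^2+1)/p^2 = 1 + 1/p^2$. Dividing by $G_q(R)$ gives the claim.

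The main obstacle is in (c): the naive bound $G_{qd}(R/d) \leq G_q(R)$ is too crude and would lose a full $\log R$ factor (giving $R\,\mu^2(g)/\vp(g)$ rather than $R\,\mu^2(g)/(\vp(g) G_q(R))$). The saving relies precisely on the exact combinatorial identity above, which replaces the nested double sum by $\sum_m \mu^2(m)\sg(m)/\vp(m)$—a sum that grows linearly rather than like $X\log X$, thanks to the convergent Euler product $\prod_p(1 + 1/p^2)$.
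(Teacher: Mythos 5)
Your proof is correct and follows essentially the same route as the paper: expand the Selberg weight, interchange sums, reduce (a) to a M\"obius identity, reduce (b) to $\log d = \sum_{p\mid d}\log p$ together with the explicit value of $B_{[p,r]}$ plus Mertens, and reduce (c) to the linear growth of $\sum_{m\leq X}\mu^2(m)\sg(m)/\vp(m)$. The only difference is that you spell out the Dirichlet-series justification for (c), which the paper leaves implicit.
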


\begin{proof}
The identity in (a) is a standard fact about the Selberg sieve. For $r \leq R$ and $(r,q) = 1$: 
$$
\sum\lm_{\st{d \cg 0 \mm{r}}} \fr{\lb(d)}{d} = \fr{1}{G_q(R)} \sum\lm_{\st{d \cg 0 \mm{r} \\ (d, q) = 1}} \mu(d) \sum\lm_{\st{\ell \leq R \\ \ell \cg 0 \md{d} \\ (\ell, q) = 1}} \fr{\mu^2(\ell)}{\vp(\ell)} = \fr{1}{G_q(R)} \sum\lm_{\st{\ell \leq R \\ \ell \cg 0 \mm{r} \\ (\ell, q) = 1 }} \fr{\mu^2(\ell)}{\vp(\ell)} \sum_{r \mid d \mid \ell} \mu(d) = \fr{\mu(r)}{\vp(r) \, G_q(R)} . 
$$
For (b), we write:
\begin{equation*}
\begin{split}
\sum_{d \cg 0 \mm{r}} \fr{\lb(d)}{d} \, \log d 
&= \sum_{d \cg 0 \mm{r}} \fr{\lb(d)}{d} \sum_{p \mid d} \log p
= \sum\lm_{\st{p \leq R \\ (p, q) = 1}} \log p \, B_{[p, r]} 
= B_r \Biggl( \log r  - \sum\lm_{\st{p \leq R/r \\ (p, q) = 1}} \fr{\log p}{p-1} \Biggr) \\
&= O^*\Biggl( (1 + \eps) \fr{\mu^2(r)}{\vp(r)} \fr{\log R}{G_q(R)}\Biggr),
\end{split}
\end{equation*}
where we use the fact that $\sum_{p \leq R/r} \fr{\log p}{p-1} = \log R/r + O(1)$ and that $|B_r| = \fr{\mu^2(r)}{\vp(r) \, G_q(R)}$. 

For (c), we have:
\begin{equation*}
\begin{split}
\sum_{d \cg 0 \mm{g}} |\lb(d)| &= \fr{1}{G_q(R)} \sum\lm_{\st{d \cg 0 \mm{g}\\ (d, q) = 1}} d \sum\lm_{\st{\ell \leq R \\ \ell \cg 0 \mm{d} \\ (\ell, q)=1}} \fr{\mu^2(\ell)}{\vp(\ell)}
= \fr{g}{G_q(R)} \sum\lm_{\st{\ell \leq R \\ \ell \cg 0 \mm{g} \\ (\ell, q)=1}} \fr{\mu^2(\ell) \sg(\ell/g)}{\vp(\ell)} \ll \fr{\mu^2(g)}{\vp(g)}\fr{R}{G_q(R)} . 
\end{split}
\end{equation*}
\end{proof}
\smallskip
\begin{lem}
Define $g_0$ on the square-free positive integers by $g_0(v) = \prod\lm_{p \mid v} \lf( 1 + \fr{1}{\sr{p}} \rt)$. Then, for any $0 < \rho <1/2$, we have 
$$g_0(v) \leq C(\rho) v^{\rho} .$$ 
\end{lem}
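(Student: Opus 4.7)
The plan is to exploit the multiplicativity of $g_0$: writing $f(p) := (1 + 1/\sqrt{p})/p^{\rho}$ for each prime $p$, the claim reduces to showing that the product $\prod_{p \mid v} f(p)$ is bounded by some constant $C(\rho)$ uniformly over square-free $v$, since
\[
\frac{g_0(v)}{v^{\rho}} = \prod_{p \mid v} \frac{1 + 1/\sqrt{p}}{p^{\rho}} = \prod_{p \mid v} f(p).
\]

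Since $\rho > 0$ is fixed, $p^{\rho} \to \infty$ while $1 + 1/\sqrt{p} \to 1$ as $p \to \infty$. So I would fix an explicit threshold $p_0 = p_0(\rho)$ (for instance $p_0 = 2^{1/\rho}$, noting that for $p \geq p_0$ one has $p^{\rho} \geq 2 \geq 1 + 1/\sqrt{p}$) such that $f(p) \leq 1$ for every prime $p \geq p_0$. Then, discarding the contribution of primes $p \geq p_0$ dividing $v$ can only shrink the product, yielding
\[
\prod_{p \mid v} f(p) \;\leq\; \prod_{\substack{p \mid v \\ p < p_0}} f(p) \;\leq\; \prod_{p < p_0} \max\bigl(f(p),\, 1\bigr) \;=:\; C(\rho),
\]
which is a finite constant depending only on $\rho$, being a product over the finite set of primes below $p_0(\rho)$. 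Multiplying through by $v^{\rho}$ gives the claim.

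There is no genuine obstacle here; this is a standard multiplicative-function argument, and the role of the hypothesis $\rho < 1/2$ is only qualitative (it ensures that $f(p) \leq 1$ eventually, which already holds for any $\rho > 0$, so in fact the same proof works for all $\rho > 0$). As $\rho \downarrow 0$ the threshold $p_0(\rho)$ and hence $C(\rho)$ may blow up, but the statement only requires $C(\rho)$ to be finite for each fixed $\rho$, so this is harmless.
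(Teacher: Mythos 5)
Your proof is correct and follows essentially the same route as the paper: both factor $g_0(v)/v^{\rho}$ as $\prod_{p\mid v} f(p)$ with $f(p) = (1 + 1/\sqrt{p})/p^{\rho}$, observe that $f(p) \le 1$ once $p$ exceeds a threshold depending only on $\rho$, and bound the whole product by the finite product over the small primes. Your use of $\max(f(p),1)$ is a mild refinement that sidesteps the monotonicity of $f$ (which the paper implicitly relies on when asserting a single crossover point $Y(\rho)$), and your remark that $\rho < 1/2$ is inessential is also correct.
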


\begin{proof} Clearly the given function $g_0$ is bounded by the divisor function $\tau$, for which such a bound holds. 
%
%
\end{proof}

\smallskip

We now revisit the work of Ramar\'{e} \cite{Ram1} on partial sums of the M\"{o}bius function. Let 
$$\check{m}_v(X) = \sum\lm_{\st{n \leq X \\ (n,v)=1}} \fr{\mu(n)}{n} \log \fr{X}{n} \quad \text{ and } \quad \check{\check{m}}_v(X) = \sum\lm_{\st{n \leq X \\ (n, v)=1}} \fr{\mu(n)}{n} \log^2 \fr{X}{n}, $$ 
and set $\check{m}(X): = \check{m}_1(X)$ as well as $\check{\check{m}}(X) := \check{\check{m}}_1(X)$. 

\smallskip

We can now extract the following bounds from Theorems 1.5, 1.8 and Corollaries 1.10, 1.11 of \cite{Ram1} (for all $X \geq 1$):  
\begin{align*}
\check{m}(X) = 1  +  O^*\lf(\fr{c_1}{\log X}\rt) \quad &\text{and} \quad |\check{m}(X)| \leq c_1',\\
 \check{\check{m}}(X) = 2 \log X - 2\gm  + \,  O^*\lf(\fr{c_2}{\log X}\rt) \quad &\text{and} \quad \lf|\check{\check{m}}(X) \rt| \leq c'_2 \log X,  
\end{align*}

where $$c_1 = 0.213, \ \ c'_1 = 1.00303,  \ \ c_2 = 0.2062 \ \, \text{and} \ \,c'_2 = 2 .$$

\medskip

This allows us to deduce the following:

\begin{lem}
\label{muvlog}
Let $0 < \rho < 1/8$. Then, for $X \geq X_0(\rho)$ sufficiently large and square-free $v$, with $v \leq X^{1/\rho}$, one has
\begin{itemize}
\item[(a)]
$$
\check{m}_v(X) = \fr{v}{\vp(v)} \lf( 1 + O^*\lf( \fr{1}{\log X} \rt) \rt),
$$
\item[(b)]
$$
\check{\check{m}}_v(X) = \fr{2 v}{\vp(v)} \lf( \log X -  \gm - \sum_{p \mid v}\fr{\log p}{p-1} + O^*\lf( \fr{0.5}{\log X} \rt) \rt).
$$
\item[(c)] Consequently, one has
$$
 \sum\lm_{\st{d \leq X \\ (d, v) = 1}} \fr{\mu(d)}{d} \log \fr{X}{d} \, \log d  = \fr{v}{\vp(v)} \lf( -\log X + 2\sum_{p \mid v} \fr{\log p}{p-1} + O(1) \rt) .
$$
\end{itemize}
\end{lem}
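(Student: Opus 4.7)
The plan is to reduce all three parts to the cited bounds on $\check{m}$ and $\check{\check{m}}$ (no coprimality) via a Dirichlet-convolution identity that peels off the condition $(n,v)=1$. The key starting point is that
\[
\sum\lm_{n,\,(n,v)=1}\fr{\mu(n)}{n^s} = \fr{1}{\zeta(s)}\prod\lm_{p\mid v}\fr{1}{1-p^{-s}}
\]
translates arithmetically as $\mu(n)\bm{1}_{(n,v)=1} = (\mu * \chi_v)(n)$, where $\chi_v(n) = \bm{1}_{n\mid v^{\infty}}$. Substituting and swapping summations yields the crucial identities
\[
\check{m}_v(X) = \sum\lm_{\st{m\mid v^{\infty} \\ m\leq X}} \fr{\check{m}(X/m)}{m}, \qquad \check{\check{m}}_v(X) = \sum\lm_{\st{m\mid v^{\infty} \\ m\leq X}} \fr{\check{\check{m}}(X/m)}{m},
\]
while $\sum_{m\mid v^{\infty}}1/m = \prod_{p\mid v}(1-1/p)^{-1} = v/\vp(v)$ exactly.

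For part (a), I would write $\check{m}_v(X) - v/\vp(v)$ as $\sum_{m\leq X,\,m\mid v^{\infty}}(\check{m}(X/m)-1)/m - \sum_{m > X,\,m\mid v^{\infty}}1/m$ and split the first sum at $m = \sr{X}$. The range $m \leq \sr{X}$ has $\log(X/m) \geq (\log X)/2$, so $|\check{m}(Y)-1|\leq c_1/\log Y$ gives a contribution of size $\leq (2c_1/\log X)\cdot v/\vp(v)$. For $\sr{X} < m \leq X$, $|\check{m}(Y)| \leq c_1'$ reduces matters to a tail bound $\sum_{m > T,\,m\mid v^{\infty}}1/m$ at $T = \sr{X}$; the same bound at $T = X$ handles the missing tail. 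Rankin's trick at $\sg = 1/2$, combined with the algebraic identity $(1-1/p)/(1-p^{-1/2}) = 1 + 1/\sr{p}$, yields
\[
\sum\lm_{\st{m\mid v^{\infty} \\ m > T}}\fr{1}{m} \;\leq\; T^{-1/2}\prod\lm_{p\mid v}\fr{1}{1 - p^{-1/2}} \;=\; T^{-1/2}\,g_0(v)\,\fr{v}{\vp(v)}.
\]
Applying the preceding lemma with a small exponent $\rho' \in (0, \rho/4)$ gives $g_0(v) \leq C(\rho')v^{\rho'} \leq C(\rho')X^{\rho'/\rho}$, so $T^{-1/2} g_0(v) \leq X^{-1/4 + \rho'/\rho} = o(1/\log X)$ for $X \geq X_0(\rho)$, establishing (a).

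Part (b) follows the same template. Expanding $\check{\check{m}}(X/m) = 2\log(X/m) - 2\gm + O^*(c_2/\log(X/m))$ in the formula for $\check{\check{m}}_v(X)$ produces a main contribution
\[
2(\log X - \gm)\sum\lm_{\st{m\mid v^{\infty} \\ m\leq X}}\fr{1}{m} \;-\; 2\sum\lm_{\st{m\mid v^{\infty} \\ m\leq X}}\fr{\log m}{m},
\]
and logarithmic differentiation of $F(s) = \prod_{p\mid v}(1-p^{-s})^{-1}$ at $s=1$ gives $\sum_{m\mid v^{\infty}}(\log m)/m = (v/\vp(v))\sum_{p\mid v}(\log p)/(p-1)$, whence the claimed main expression. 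The truncation errors are controlled by the same Rankin estimate (including the $\log m$ variant, obtained by differentiating $F$ at a shifted point), and the $c_2/\log(X/m)$ error, after the $\sr{X}$-split, contributes at most $(2c_2/\log X)\cdot v/\vp(v) = (0.4124/\log X)\cdot v/\vp(v)$, which comfortably sits inside the claimed $(2v/\vp(v))\cdot O^*(0.5/\log X)$. Part (c) is then purely algebraic: from the identity $\log(X/d)\log d = \log X \cdot \log(X/d) - \log^2(X/d)$,
\[
\sum\lm_{\st{d\leq X \\ (d,v)=1}}\fr{\mu(d)}{d}\log\fr{X}{d}\log d = \log X \cdot \check{m}_v(X) - \check{\check{m}}_v(X),
\]
and substituting (a) and (b) gives the stated expression, the coefficient of $\log X$ collapsing to $1 - 2 = -1$.

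The principal obstacle is the uniform control of the Rankin tails: $v$ is permitted as large as $X^{1/\rho}$, so the product $\prod_{p\mid v}(1 - p^{-1/2})^{-1}$ can inflate as a small power of $X$, and one must take the $g_0$-exponent strictly below $\rho/2$ (in fact below $\rho/4$, once we demand the tail dominate $1/\log X$ rather than just $1$) so that $T^{-1/2}g_0(v)$ beats $1/\log X$ --- this is precisely what the hypothesis $\rho < 1/8$ and the accompanying $X_0(\rho)$ buy us.
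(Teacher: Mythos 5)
Your proposal is correct and follows essentially the same path as the paper: write $\check{m}_v$ and $\check{\check{m}}_v$ as convolutions $\sum_{m\mid v^\infty,\, m\leq X}\check{m}(X/m)/m$ etc., split the sum at a fixed power of $X$, use Ramar\'e's bounds on $\check{m}$, $\check{\check{m}}$ on the head, bound the tail by Rankin's trick at $\sigma=1/2$ combined with $g_0(v)\leq C(\rho')v^{\rho'}$, and deduce (c) from the algebraic identity $\log(X/d)\log d=\log X\cdot\log(X/d)-\log^2(X/d)$. The only cosmetic differences are the choice of split point ($\sqrt{X}$ vs.\ the paper's $X^{3/4}$) and the exponent for $g_0$ (your $\rho'<\rho/4$ vs.\ the paper's fixed $\rho/4$); both are equally valid.
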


\begin{proof}
Let us prove (a). It is easy to see that:
\begin{equation}
\begin{split}
\label{t2mp1}
\check{m}_v(X)  &=  \sum\lm_{\st{ \ell \mid v^{\infty}\\ \ell \leq X}} \fr{\check{m}(X/\ell)}{\ell} 
= \sum\lm_{\st{\ell \mid v^{\infty} \\ \ell \leq X^{3/4}}} \fr{1}{\ell} \lf( 1 + 
O^*\lf( \fr{c_1}{\log X/\ell} \rt) \rt) \, 
+ \, O^*\Biggl(c'_1\sum\lm_{\st{\ell \mid v^{\infty} \\ \ell > X^{3/4}}} \fr{1}{\ell} \Biggr)\\
&= \fr{v}{\vp(v)} \lf( 1 + O^*\lf( \fr{4c_1}{\log X} \rt) \rt)  
+  O^*\Biggr( \lf( 1 + c_1' + \fr{4c_1}{\log X} \rt) 
\sum\lm_{\st{\ell \mid v^{\infty} \\ \ell > X^{3/4}}} \fr{1}{l} \Biggr)
\end{split}
\end{equation}
Note that,
\begin{equation*}
\begin{split}
\sum\lm_{\st{\ell \mid v^{\infty} \\ \ell > X^{3/4}}} \fr{1}{\ell} &\leq \fr{1}{X^{3/8}} \sum_{\ell \mid v^{\infty}} \fr{1}{\sr{\ell}}  = \fr{1}{X^{3/8}}\prod_{p \mid v} \lf( 1 - \fr{1}{\sr{p}} \rt)^{-1}
= \fr{1}{X^{3/8}} \fr{v}{\vp(v)} \prod_{p \mid v} \lf( 1 + \fr{1}{\sr{p}} \rt)\\
&\leq \fr{C(\rho/4) \, v^{\rho/4}}{X^{3/8}} \fr{v}{\vp(v)} \leq \fr{C(\rho/4) }{X^{1/8}} \fr{v}{\vp(v)} \leq \fr{v}{\vp(v)} \fr{\eps}{\log X},
\end{split}
\end{equation*}
since $v \leq X^{1/\rho}$. Therefore, the error term in the last line of \eqref{t2mp1} is at most $\fr{\eps (1 + c'_1 + \eps)}{\log X} \fr{v}{\vp(v)}$, 
for all $X \geq X_0(\rho)$ sufficiently large, which proves (a).

\vskip 0.01in

Now, we prove (b). Again, we write:
\begin{equation*}
\begin{split}
\check{\check{m}}_v(X) 
&= \sum\lm_{\st{\ell \mid v^{\infty} \\ \ell \leq X}} \fr{\check{\check{m}}(X/\ell)}{\ell}
= 2 \sum\lm_{\st{\ell \mid v^{\infty} \\ \ell \leq X^{3/4}}} \fr{1}{\ell} \lf( \log \fr{X}{\ell} - \gm + O^*\lf( \fr{c_2/2}{\log X^{1/4}} \rt) \rt) \, +  \, O^*\Biggl(c'_2 \log X  \sum\lm_{\st{\ell \mid v^{\infty} \\\ell > X^{3/4}}}\fr{1}{\ell} \Biggr) \\
&= 2 \sum\lm_{\st{\ell \mid v^{\infty}}} \fr{1}{\ell}\lf( \log \fr{X}{\ell} - \gm  + O^*\lf( \fr{2c_2}{\log X} \rt) \rt) \,  + \, O^*\Biggl( (1 + c'_2) \log X  \sum\lm_{\st{\ell \mid v^{\infty} \\\ell > X^{3/4}}}\fr{1}{\ell}\Biggr)
\end{split}
\end{equation*}
Now, just like (a), the error term is $O^*\lf(\fr{v}{\vp(v)}\fr{\eps}{\log X}\rt)$. And the main term is:
\begin{equation*}
\fr{2v}{\vp(v)}\lf(\log X - \gm  - \sum_{p \mid v} \fr{\log p}{p-1} + O^*\lf( \fr{2c_2}{\log X} \rt) \rt),
\end{equation*}
since $\sum_{\ell \mid v^{\infty}} \fr{\log \ell}{\ell} = \fr{v}{\vp(v)} \sum_{p \mid v} \fr{\log p}{p-1}$. Combining these two bounds, we prove (b).

And finally, (c) follows from (a) and (b), since 
$\log X/d \, \log d = \log X/d \, \log X - \log^2 X/d$. This completes the proof.
\end{proof}
\smallskip

%
\begin{lem}
\label{thtsum}
Let $0 < \rho < 1/8$ and $v$ be a square-free positive integer with $U \geq v^{1 + \rho}$. Then:
\begin{itemize}
\item[(a)]
$$
\sum\lm_{d \cg 0 \mm{v}} \fr{\tht'(d)}{d} = O^*\lf( \fr{2}{\vp(v) \, \log \fr{U_1}{U} \, \log \fr{U}{v}} \rt),
$$
\item[(b)]
$$
\sum\lm_{d \cg 0 \mm{v}} \fr{\tht'(d) \, \log d}{d}  = \fr{\mu(v)}{\vp(v)} \lf( -1 + O\biggl( \fr{\rho^{-1}}{\log \fr{U_1}{U}}\biggr) \rt),
$$
\item[(c)]
$$
\sum_{d \cg 0 \mm{v}} |\tht'(d)| \leq \fr{U_1}{v \, \log \fr{U_1}{U}}. 
$$
\end{itemize}
\end{lem}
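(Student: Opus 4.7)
The unifying observation is that the Barban--Vehov cutoff can be written as a difference of truncated logarithms: for every $d \geq 1$,
\begin{equation*}
\log(U_1/U) \cdot w(d) = \log^+(U_1/d) - \log^+(U/d), \quad \text{where } \tht'(d) = \mu(d) w(d).
\end{equation*}
This is immediate by checking the three ranges $d\leq U$, $U<d\leq U_1$, $d>U_1$. After the substitution $d = vn$ (which forces $(n,v)=1$ by square-freeness of $d$), we have $\mu(d) = \mu(v)\mu(n)$ and each of the sums in (a) and (b) reduces to an expression involving $\check{m}_v$ and $\check{\check{m}}_v$ evaluated at $U_1/v$ and $U/v$. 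Note that the hypothesis $U \geq v^{1+\rho}$ ensures $v \leq (U/v)^{1/\rho} \leq (U_1/v)^{1/\rho}$, so Lemma \ref{muvlog} applies at both endpoints.

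For (a), the identity above gives
\begin{equation*}
\sum_{d \cg 0\mm{v}} \fr{\tht'(d)}{d} = \fr{\mu(v)}{v \log(U_1/U)} \bigl( \check{m}_v(U_1/v) - \check{m}_v(U/v) \bigr).
\end{equation*}
Inserting the estimate $\check{m}_v(X) = \fr{v}{\vp(v)}(1 + O^*(1/\log X))$ causes the two leading terms $\fr{v}{\vp(v)}$ to cancel, leaving an error of size $\fr{v}{\vp(v)} \cdot O^*\!\bigl(\fr{2}{\log(U/v)}\bigr)$, which yields the claimed bound.

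For (b), writing $\log d = \log v + \log n$ splits the sum into two pieces. The piece carrying $\log v$ is bounded by $\log v$ times the sum in (a); using $\log(U/v) \geq \rho\log v$ (which follows from $U \geq v^{1+\rho}$), this is absorbed into the claimed error $O(\rho^{-1}/\log(U_1/U))$. For the remaining piece $\sum_{(n,v)=1}\fr{\mu(n)w(vn)\log n}{n}$, the identity $\log(X/n)\log n = \log X \log(X/n) - \log^2(X/n)$ reduces it to $\log X \, \check{m}_v(X) - \check{\check{m}}_v(X)$ at $X = U_1/v$ and $X = U/v$. Substituting the asymptotics from Lemma \ref{muvlog}(a)--(b) produces $\fr{v}{\vp(v)}(-\log X + 2\gm + 2\sum_{p\mid v}\fr{\log p}{p-1}) + O(v/\vp(v))$ at each endpoint; subtracting, the constant and the $\sum_{p\mid v}$ terms cancel, and the $\log X$ contributions combine to $-\log(U_1/U)$. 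Dividing by $\log(U_1/U)$ yields $-\fr{v}{\vp(v)} + O\!\bigl(\fr{v/\vp(v)}{\log(U_1/U)}\bigr)$, and multiplying by $\mu(v)/v$ gives (b).

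For (c), I use the uniform bound $w(d) \leq \log(U_1/d)/\log(U_1/U)$ valid throughout $d \leq U_1$ (trivial in the tail range and immediate in $d \leq U$). Substituting $d = vn$ with $(n,v) = 1$,
\begin{equation*}
\sum_{d \cg 0\mm{v}} |\tht'(d)| \leq \fr{\mu^2(v)}{\log(U_1/U)} \sum\lm_{\st{n \leq U_1/v \\ (n,v)=1}} \mu^2(n) \log \fr{U_1/v}{n}.
\end{equation*}
The inner sum is bounded by $\sum_{n \leq X}\log(X/n) = \int_1^X \lfloor t\rfloor/t \, dt \leq X - 1$ with $X = U_1/v$, giving (c) at once. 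The main obstacle is purely arithmetic bookkeeping in step (b), where one must ensure the $2\gm$ and $2\sum_{p\mid v}\fr{\log p}{p-1}$ terms cancel cleanly between the two endpoints rather than accumulating; the cancellation is structural, driven by the fact that these terms depend on $v$ only, not on $X$.
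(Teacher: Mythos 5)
Your proposal is correct and takes essentially the same approach as the paper's proof: write the Barban--Vehov weight as a normalized difference of truncated logarithms, substitute $d = vn$ to reduce everything to Ramar\'e's partial sums $\check{m}_v$, $\check{\check{m}}_v$ controlled by Lemma \ref{muvlog}, and use $\sum_{n \leq X}\log(X/n) \leq X$ for part (c). The only cosmetic difference is that in (b) you re-derive the content of Lemma \ref{muvlog}(c) inline (via $\log(X/n)\log n = \log X\,\log(X/n) - \log^2(X/n)$), whereas the paper cites that part of the lemma directly.
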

\begin{proof}
Using Lemma \ref{muvlog} (a), we will obtain:
\begin{equation*}
\begin{split}
\label{ttmp8}
\sum\lm_{d \cg 0 \mm{v}} \fr{\tht'(d)}{d} &= \fr{\mu(v) }{v \, \log \fr{U_1}{U}} \lf( \sum\lm_{\st{ d \leq U_1/v \\ (d, v) = 1 }} \fr{\mu(d)}{d} \log \fr{U_1/v}{d} - \sum\lm_{\st{ d \leq U/v \\ (d, v) = 1}} \fr{\mu(d)}{d} \log \fr{U/v}{d}  \rt)\\ 
&=  \fr{\mu(v) }{v \, \log \fr{U_1}{U} } \fr{v}{\vp(v)} \cdot O^*\lf( \fr{2}{\log U/v} \rt) 
= O^*\lf( \fr{2}{\vp(v) \, \log \fr{U_1}{U} \, \log \fr{U}{v} } \rt),
\end{split}
\end{equation*}
provided $v \leq (U/v)^{1/\rho}$, which is equivalent to $U \geq v^{1 + \rho}$. 

Next, we have (using Lemma \ref{muvlog} (a) and (c)): 
\begin{equation*}
\begin{split}
\sum\lm_{\st{d \cg 0 \mm{v}}} \fr{\tht'(d) \, \log d}{d}
&= \fr{\mu(v)}{v \, \log \fr{U_1}{U}} \lf( \sum\lm_{\st{d \leq U_1/v \\ (d, v) = 1}} \fr{\mu(d)}{d} \log \fr{U_1/v}{d} \, \log dv - \sum\lm_{\st{d \leq U/v \\ (d, v) = 1}} \fr{\mu(d)}{d} \log \fr{U/v}{d} \, \log dv \rt) \\
&= \fr{\mu(v)}{\vp(v) \, \log \fr{U_1}{U}}
\lf( O^*\biggl( \fr{2 \, \log v }{\log \fr{U}{v}} \biggr)  - \log \fr{U_1}{U} + O(1)\rt) 
= \fr{\mu(v)}{\vp(v)} \lf( -1 + O\biggl( \fr{\rho^{-1}}{\log \fr{U_1}{U}} \biggr) \rt) .
\end{split}
\end{equation*}

For (c), we note that
\begin{equation*}
\sum_{d \cg 0 \mm{v}} |\tht'(d)| \leq \fr{1}{\log \fr{U_1}{U}} \sum\lm_{\st{d \leq U_1/v \\ (d, v) = 1}} \log \fr{U_1/v}{d} 
\leq \fr{U_1}{v \, \log \fr{U_1}{U}},
\end{equation*}
where we use the inequality $\sum_{n \leq z} \log z/n \leq  z$, with $z = U_1/v$. 
\end{proof}

\smallskip

\begin{lem}
\label{qhm}
Suppose $q_0 \mid q$, $U < y$ and $0 < \rho < 1/8$. Then, we have the following:
\begin{itemize}
\item[(a)] If $U > (q_0 R)^{1 + \rho}$, we have
$$
\lf| \sum_{m \cg 0 \mm{q_0}} \fr{h(m)}{m} \rt| \leq \fr{2}{\vp(q_0) \, \log \fr{U_1}{U} \, \log \fr{U}{q_0 R}}.
$$
\item[(b)] If $U > (q_0 R)^{1 + \rho}$, we have
\begin{equation*}
\sum_{m \cg 0 \mm{q_0}} \fr{h(m)}{m} \log \fr{y}{m} = 
\fr{\mu(q_0)}{\vp(q_0)} \lf( 1 + O\biggl(\fr{\rho^{-1} \, \log y}{\log \fr{U_1}{U} \, \log q_0R } \biggr) \rt) .
\end{equation*}

\item[(c)] 
$$
\sum_{m \cg 0 \mm{q_0}} |h(m)|  = O\lf( \fr{q}{\vp(q)} \fr{U_1 R}{q_0 \, \log R \, \log \fr{U_1}{U}} \rt).
$$
%
%
\end{itemize}
\end{lem}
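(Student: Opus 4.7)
My plan is to unfold $h$ via its definition and reduce each sum to inputs already furnished by Lemma~\ref{lbsum} and Lemma~\ref{thtsum}. First observe that whenever $\lb(d_1)\tht'(d_2) \neq 0$ we have $(d_1,q) = 1$, hence $(d_1,q_0) = 1$ since $q_0 \mid q$; consequently the constraint $q_0 \mid [d_1,d_2]$ is equivalent to $q_0 \mid d_2$. Since $\tht'$ is supported on square-free integers, both sides in (a) and (b) vanish unless $q_0$ is square-free, so I may assume $q_0$ square-free throughout.

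For (c), the triangle inequality $|h(m)| \leq \sum_{[d_1,d_2]=m}|\lb(d_1)||\tht'(d_2)|$ decouples upon summation:
\[
\sum_{m \cg 0 \mm{q_0}} |h(m)| \;\leq\; \Bigl(\sum_{d_1}|\lb(d_1)|\Bigr)\Bigl(\sum_{d_2 \cg 0 \mm{q_0}}|\tht'(d_2)|\Bigr).
\]
Lemma~\ref{lbsum}(c) with $g=1$ and Lemma~\ref{thtsum}(c) with $v=q_0$ then close (c), after invoking $G_q(R) \gg (\vp(q)/q)\log R$ to convert $1/G_q(R)$ into $q/(\vp(q)\log R)$.

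For (a), I would expand $(d_1,d_2) = \sum_{e \mid d_1,\,e \mid d_2}\vp(e)$, swap summations, and note that $e \mid d_1$ with $\lb(d_1) \neq 0$ forces $(e,q)=1$, whence $[e,q_0]=eq_0$. The inner $d_1$-sum collapses via Lemma~\ref{lbsum}(a) to $\mu(e)/(\vp(e)G_q(R))$, leaving
\[
\sum_{m \cg 0 \mm{q_0}} \frac{h(m)}{m} \;=\; \frac{1}{G_q(R)} \sum_{\substack{e \leq R \\ (e,q)=1}} \mu(e) \!\sum_{d \cg 0 \mm{eq_0}} \!\frac{\tht'(d)}{d}.
\]
The hypothesis $U > (q_0R)^{1+\rho}$ ensures $U \geq (eq_0)^{1+\rho}$ uniformly in $e \leq R$, so Lemma~\ref{thtsum}(a) applies. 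Using $\log(U/eq_0) \geq \log(U/Rq_0)$ and $\sum_{e \leq R,\,(e,q)=1}\mu^2(e)/\vp(e) = G_q(R)$, the outer $e$-sum cancels the prefactor, yielding exactly the bound in (a).

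For (b), write $\log(y/m) = \log y - \log m$, so the target equals $\log y \cdot M(1) + M'(1)$ with $M(s) := \sum_{m \cg 0 \mm{q_0}} h(m)/m^s$. The identity $(d_1,d_2)^s = \sum_{e \mid (d_1,d_2)} J_s(e)$, where $J_s(n) = \sum_{d \mid n}\mu(d)(n/d)^s$ (so $J_1 = \vp$), produces the factorisation $M(s) = \sum_{(e,q)=1} J_s(e)\,B_e(s)\,T_{eq_0}(s)$, where $B_e(s) := \sum_{d \cg 0 \mm{e}}\lb(d)/d^s$ and $T_v(s) := \sum_{d \cg 0 \mm{v}}\tht'(d)/d^s$ are finite sums. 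Differentiating at $s=1$ produces three pieces. The main piece $\sum_e J_1(e)B_e T'_{eq_0}(1)$ uses Lemma~\ref{thtsum}(b) to supply $T'_{eq_0}(1) = \tfrac{\mu(eq_0)}{\vp(eq_0)}\bigl(1 + O(\rho^{-1}/\log(U_1/U))\bigr)$; combining with $\vp(e)B_e = \mu(e)/G_q(R)$ and summing exactly as in (a) returns the leading $\mu(q_0)/\vp(q_0)$ with an error $O(\rho^{-1}/(\vp(q_0)\log(U_1/U)))$. The remaining pieces $J'_1(e)B_eT_{eq_0}$ and $J_1(e)B'_e(1)T_{eq_0}$ each carry an extra $\log e$-type weight (from $J'_1(e)/\vp(e) = \sum_{p \mid e} p\log p/(p-1)$ or from Lemma~\ref{lbsum}(b)); the $e$-sum argument of (a) bounds each by $O(\log R/(\vp(q_0)\log(U_1/U)\log(U/Rq_0)))$, and the inequality $\log(U/Rq_0) \geq \rho \log(q_0R)$ (from $U > (q_0R)^{1+\rho}$) absorbs these into the main error. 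Finally $\log y \cdot M(1)$ is bounded via (a) as $O(\log y/(\vp(q_0)\log(U_1/U)\log(U/Rq_0)))$; the same conversion — valid because $U < y$ forces $q_0R < y$ and thus $\log q_0R < \log y$ — re-expresses this as $O(\rho^{-1}\log y/(\vp(q_0)\log(U_1/U)\log q_0R))$, matching the stated form. The main bookkeeping obstacle is the three-way differentiation of $M(s)$; once organised as above, each piece is a mild variation of (a).
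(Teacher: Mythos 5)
Your proof is correct and follows essentially the same route as the paper: in all three parts the key step is the expansion $1/[d_1,d_2]=(d_1,d_2)/(d_1d_2)$ and the passage to sums congruent to $0$ mod $e$ (or $eq_0$), closed with Lemmas~\ref{lbsum} and~\ref{thtsum} and the bound $G_q(R)\geq(\vp(q)/q)\log R$. For part~(b), the paper decomposes $\log(y/[d_1,d_2])=\log y-\log d_1-\log d_2+\log(d_1,d_2)$ directly into pieces $S_0-S_1-S_2+S_3$; your version packages the same four contributions as $\log y\,M(1)+M'(1)$ via the parametrized sum $M(s)$ and the Jordan totient $J_s$, which is a cosmetically slicker but arithmetically identical bookkeeping that yields the same error terms and uses the same hypotheses.
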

\begin{proof}
For (a), we write
\begin{equation*}
\begin{split}
\sum_{m \cg 0 \mm{q_0}} \fr{h(m)}{m} &= \sum\lm_{\st{d_1, d_2 \\ q_0 \mid [d_1, d_2]}} \fr{\lb(d_1) \tht'(d_2) }{[d_1, d_2]}
 = \sum\lm_{\st{r \leq R \\ (r, q) = 1}} \vp(r) \lf( \sum\lm_{\st{d_1 \cg 0 \mm{r}}} \fr{\lb(d_1)}{d_1} \rt) \lf( \sum\lm_{\st{d_2 \cg 0 \mm{q_0 r}}} \fr{\tht'(d_2)}{d_2} \rt),
\end{split}
\end{equation*}
since the condition $q_0 \mid [d_1, d_2]$ implies $q_0 \mid d_2$ (because $(d_1, q) = 1 = (d_1, q_0)$, as $q_0 \mid q$). 

%
Therefore, by Lemma \ref{lbsum} (a) and Lemma \ref{thtsum} (a) (with $v = q_0 r$), we have
\begin{equation*}
\label{ttmp1}
\begin{split}
\lf| \sum_{m \cg 0 \mm{q_0}} \fr{h(m)}{m} \rt| 
&\leq \fr{2}{\vp(q_0) \, G_q(R) \, \log \fr{U_1}{U} \, \log \fr{U}{q_0R}} \sum\lm_{\st{r \leq R \\ (r, q) = 1}} \fr{\mu^2(r)}{\vp(r)} 
= \fr{2}{\vp(q_0) \, \log \fr{U_1}{U} \, \log \fr{U}{q_0R}} .
\end{split}
\end{equation*}
%
%
%
%

Now, we prove (b). We express the given sum as 
\begin{equation}
\begin{split}
\label{S1init}
S &= \sum\lm_{\st{d_1, d_2 \\ q_0 \mid [d_1, d_2]}} \fr{\lb(d_1) \tht'(d_2)}{[d_1, d_2]} \log \fr{y}{[d_1, d_2]}\\
&= \sum\lm_{\st{r \leq R \\ (r, q) = 1}} \vp(r) \sum\lm_{\st{d_1 \cg 0 \mm{r} \\ d_2 \cg 0 \mm{q_0 r}}} 
\fr{\lb(d_1)}{d_1} \fr{\tht'(d_2)}{d_2} \bigl( \log y - \log d_1 - \log d_2 + \log (d_1, d_2) \bigr)\\
&= S_0 \, -  \, S_1 \, - \, S_2 \, + \, S_3, 
\end{split}
\end{equation}

Now, from (a), we have
\begin{equation}
\label{S0b}
S_0 = \log y \sum_{m \cg 0 \mm{q_0}} \fr{h(m)}{m} = O^*\lf( \fr{2 \, \log y}{\vp(q_0) \, \log \fr{U_1}{U} \, \log \fr{U}{q_0 R}} \rt) .
\end{equation}
Next, one has (using Lemma \ref{lbsum} (b) and Lemma \ref{thtsum} (a))
\begin{equation}
\label{S1b}
\begin{split}
S_1 
&= \sum\lm_{\st{r \leq R \\ (r, q) = 1}} \vp(r) \lf( \sum_{d_1 \cg 0 \mm{r}} \fr{\lb(d_1)}{d_1} \log d_1 \rt) \lf( \sum_{d_2 \cg 0 \mm{q_0r}} \fr{\tht'(d_2)}{d_2}\rt)\\
&= O^*\Biggl( \fr{(1 + \eps) \, \log R}{G_q(R)} \sum\lm_{\st{r \leq R \\ (r, q) = 1}} \mu^2(r) \cdot \fr{2}{\vp(q_0) \vp(r) \, \log \fr{U_1}{U} \, \log \fr{U}{q_0 R}} \Biggr) 
= O^*\Biggl( \fr{(2 + \eps) \, \log R}{\vp(q_0) \, \log \fr{U_1}{U} \, \log \fr{U}{q_0 R}} \Biggr)
\end{split}
\end{equation}
Next, we have (using Lemma \ref{lbsum} (c) and Lemma \ref{thtsum} (b))
\begin{equation}
\label{S2b}
S_2 = \fr{1}{G_q(R)}\sum\lm_{\st{r \leq R \\ (r, q) = 1}} \mu(r) \lf( \sum_{d_2 \cg 0 \mm{q_0r}} \fr{\tht'(d_2)}{d_2}  \log d_2 \rt)
= \fr{\mu(q_0)}{\vp(q_0)}\lf( -1 + O\lf( \fr{\rho^{-1}}{\log \fr{U_1}{U}} \rt) \rt).
\end{equation}

It now remains to estimate $S_3$. We have
\begin{equation}
\label{S3b}
\begin{split}
S_3 &= \sum\lm_{\st{[p, r] \leq R \\ ([p, r], q) = 1}} \vp(r) \log p \cdot B_{[p, r]} \cdot O^*\lf( \fr{2}{\vp(q_0) \vp([p, r]) \, \log \fr{U_1}{U} \, \log \fr{U}{q_0 [p, r]} } \rt) \\
&= O^*\Biggl( \fr{2}{\vp(q_0) \, G_q(R) \, \log \fr{U_1}{U} \, \log \fr{U}{q_0 R}} \sum\lm_{\st{[p, r] \leq R \\ (p,q) = (r, q) = 1}} \fr{\mu^2([p, r]) \, \vp(r)}{\vp([p, r])^2} \, \log p \Biggr)\\
&= O^*\Biggl( \fr{2}{\vp(q_0) \, G_q(R) \, \log \fr{U_1}{U} \, \log \fr{U}{q_0 R}} \sum\lm_{\st{r \leq R \\ (r, q) = 1}} \fr{\mu^2(r)}{\vp(r)} \biggl( \log r + \sum\lm_{\st{p \leq R/r \\ (p, q) = 1}} \fr{\log p}{(p-1)^2} \biggr) \Biggr)\\
&= O^*\lf( \fr{(2 + \eps) \, \log R}{\vp(q_0) \, \log \fr{U_1}{U} \, \log \fr{U}{q_0 R}} \rt),
\end{split}
\end{equation}
%
%
provided we have $U \geq (q_0R)^{1 + \rho}$ (since we use Lemma \ref{thtsum} (a) with $v = q_0 [p, r] \leq q_0 R$).  

From \eqref{S1init}, \eqref{S0b}, \eqref{S1b}, \eqref{S2b}, \eqref{S3b}, and noting that $\log q_0R \leq \rho^{-1} \log \fr{U}{q_0 R}$, we prove (b). 

To prove (c), write
\begin{equation*}
\begin{split}
\sum_{m \cg 0 \mm{q_0}} |h(m)| = \Biggl(  \sum_{d_1} |\lb(d_1)|  \Biggr)  \Biggl( \sum_{d_2 \cg 0 \mm{q_0}} |\tht'(d_2)| \Biggr)
\ll \fr{q}{\vp(q)} \fr{U_1 R}{q_0  \,  \log R \, \log \fr{U_1}{U}},
\end{split}
\end{equation*}
by an application of lemmas \ref{lbsum} (c) and \ref{thtsum} (c), and noting that $q_0 \mid [d_1, d_2]$ implies $q_0 \mid d_2$. 
\end{proof}

The next lemma is similar to \cite[Lemma 11.11]{HH}, with our bounds being slightly different. 

\begin{lem}
\label{LbV}
We have:
\begin{itemize}
\item[(a)]
$$
\sum_{\ell \leq V} \fr{\Lambda(\ell)}{\ell} (\ell, q) \leq \log Vq + O(1) \quad \text{ and } \quad \sum_{\ell \leq V} \Lambda(\ell) (\ell, q) = O\bigl(V + q\log Vq  \bigr)
$$
\item[(b)]
$$
\sum_{\ell \leq V} \fr{\mu^2(\ell)}{\ell} (\ell, q) \leq \tau(q) \log eV \quad \text{ and } \quad \sum_{\ell \leq V} \mu^2(\ell) (\ell, q) \leq \tau(q) \, V. 
$$
\end{itemize}
\end{lem}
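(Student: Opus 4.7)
The plan is to apply the identity $(l,q) = \sum_{d \mid (l,q)} \vp(d)$ and swap the order of summation. Each of the four sums in question becomes
\[
\sum_{l \leq V} f(l) \, (l,q) = \sum_{d \mid q} \vp(d) \sum_{\st{l \leq V \\ d \mid l}} f(l),
\]
where $f$ is $\Lb(l)/l$, $\Lb(l)$, $\mu^2(l)/l$, or $\mu^2(l)$ respectively. The behavior then depends entirely on the support of $f$.

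For (a), since $\Lb$ is supported on prime powers, the only surviving divisors are $d = 1$ and $d = p^j$ with $j \geq 1$, $p^j \mid q$, and $p^j \leq V$. The term $d = 1$ contributes $\log V + O(1)$ to the first sum (by Mertens) and $\psi(V) = O(V)$ to the second (by Chebyshev). For $d = p^j$, the inner sum $\sum_{\st{p^k \leq V \\ k \geq j}} (\log p)/p^k$ is at most $(\log p)/\vp(p^j)$, so the contribution is exactly $\log p$; summing first over $j = 1, \ldots, v_p(q)$ and then over $p \mid q$ produces $\sum_{p \mid q} v_p(q) \log p = \log q$. For the unweighted sum, the inner sum is bounded by $\log V$, so each prime-power divisor contributes $\vp(p^j) \log V$; since $\sum_{d \mid q} \vp(d) = q$ majorizes the sum over prime-power divisors, this yields $q \log V$. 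Combining gives both bounds in (a).

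For (b), since $\mu^2$ is supported on squarefree integers, only squarefree $d \mid q$ contribute. Writing $l = d l'$ with $(l',d) = 1$ and $l'$ squarefree, the two inner sums become
\[
\fr{1}{d}\sum_{\st{l' \leq V/d \\ (l',d) = 1}} \fr{\mu^2(l')}{l'} \leq \fr{\log eV}{d}, \qquad \sum_{\st{l' \leq V/d \\ (l',d)=1}} \mu^2(l') \leq \fr{V}{d}.
\]
After multiplying by $\vp(d)$ and summing over squarefree $d \mid q$, both cases reduce to estimating $\sum_{d \mid q,\, \mu^2(d)=1} \vp(d)/d$, which factors as an Euler product:
\[
\sum_{\st{d \mid q \\ \mu^2(d)=1}} \fr{\vp(d)}{d} = \prod_{p \mid q}\lf(2 - \fr{1}{p}\rt) \leq 2^{\og(q)} \leq \tau(q),
\]
using $\tau(q) = \prod_{p \mid q}(v_p(q) + 1) \geq 2^{\og(q)}$. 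This yields both bounds in (b).

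The only mildly delicate point is the bookkeeping in the $\Lb$ case: one must correctly separate $p^j \leq V$ from $p^j > V$ (the latter contributes nothing since no $p^k$ with $k \geq j$ lies in $[1,V]$) and aggregate $\log p$ over $j$ to recover the clean $\log q$ term rather than something larger. Everything else is routine.
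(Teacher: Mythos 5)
Your proof is correct, and it takes a genuinely uniform route that differs from the paper's. You open all four sums with the single convolution identity $(l,q) = \sum_{d \mid (l,q)} \vp(d)$ and swap the order of summation, whereas the paper uses two separate ad-hoc devices: for (a) it splits the sum directly according to whether $(l,q)=1$ or $l \mid q^{\infty}$ (exhaustive on prime powers), and for (b) it classifies by the exact value of $g=(l,q)$ and substitutes $l = gl'$. Both are elementary, but yours is more systematic and structurally cleaner, since the same identity drives all four estimates. One small difference worth noting: in your treatment of part (b), the final step runs through the Euler product $\sum_{d \mid q,\, \mu^2(d)=1} \vp(d)/d = \prod_{p \mid q}(2-1/p) \leq 2^{\og(q)} \leq \tau(q)$, whereas the paper's argument trivially bounds each contribution by $1$ per divisor to reach $\tau(q)$ directly; these yield the same bound. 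Also, your bound $O(V + q\log V)$ for $\sum_{l\leq V}\Lb(l)(l,q)$ is marginally tighter than the stated $O(V + q\log Vq)$, which is fine since the lemma only claims an upper bound. The bookkeeping you flag as delicate — aggregating $\log p$ over $j \leq v_p(q)$ with $p^j \leq V$ to recover exactly $\log q$ rather than something larger — is handled correctly.
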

\begin{proof}
We begin with (a). For the first bound, we have:
\begin{equation}
\begin{split}
\label{ttmp5}
\sum_{\ell \leq V} \fr{\Lb(\ell)}{\ell} (\ell, q) &= \sum\lm_{\st{\ell \leq V \\ (\ell, q) = 1}} \fr{\Lb(\ell)}{\ell} \, + \, \sum\lm_{\st{\ell \leq V \\ \ell \mid q^{\infty}}} \fr{\Lb(\ell)}{\ell} (\ell, q). 
\end{split}
\end{equation}
By Mertens estimate, the first term in \eqref{ttmp5} is at most $\log V  - \sum_{p \mid q} \fr{\log p}{p} +  O(1)$. The second term is:
\begin{equation*}
\begin{split}
\sum\lm_{\st{\ell \leq V \\ \ell \mid q^{\infty}}} \fr{\Lb(\ell)}{\ell} (\ell, q) &\leq \sum_{p \mid q} \log p \sum_{j = 1}^{\infty} \fr{p^{\min(j, v_p(q))}}{p^j} = \sum_{p \mid q} \log p \lf( \sum_{j = 1}^{v_p(q)} \fr{p^{j}}{p^j} \, + \, \fr{p^{v_p(q)}}{p^{v_p(q)}}  \sum_{j=1}^{\infty} \fr{1}{p^j}  \rt) \\
&\leq \sum_{p \mid q} \lf( v_p(q) + \fr{1}{p-1} \rt) \log p = \log q + \sum_{p \mid q} \fr{\log p}{p-1} .
\end{split}
\end{equation*}
Combining the two estimates, we get a bound of $\log Vq + O(1)$, as desired. 

\smallskip
Let us prove the second bound. We have
\begin{equation}
\label{ttmp6}
\sum_{\ell \leq V} \Lb(\ell) (\ell, q) = \sum\lm_{\st{\ell \leq V \\ (\ell, q) = 1}} \Lb(\ell) \, + \, \sum\lm_{\st{\ell \leq V \\ \ell \mid q^{\infty}}} \Lb(\ell) (\ell, q). 
\end{equation}
The first term in \eqref{ttmp6} is clearly $O(V)$. The second term is at most:
\begin{equation*}
\begin{split}
&\quad \sum_{p \mid q} \log p \sum_{j = 1}^{\floor{\log_p V}} p^{\min(j, v_p(q))} \leq \sum_{p \mid q} \log p \lf( \sum_{j = 1}^{v_p(q)} p^j \, + \, \sum_{j = v_p(q)+1}^{\floor{\log_p V}} p^{v_p(q)} \rt) \\
&\leq \sum_{p \mid q}p^{v_p(q)} v_p(q) \log p \, + \, \log V \sum_{p \mid q} p^{v_p(q)} \leq q \log q \, + \, q \log V \leq q \log Vq, 
\end{split}
\end{equation*}
where we use $\sum_{p \mid q} p^{v_p(q)} \leq q$. 

\smallskip

Now, we prove (b). We write
\begin{equation*}
\begin{split}
\sum_{\ell \leq V} \fr{\mu^2(\ell)}{\ell}(\ell, q) &\leq \sum_{g \mid q} \sum_{\ell \leq V/g} 1/\ell \leq \tau(q) \log eV.
\end{split}
\end{equation*}
Similarly,
\begin{equation*}
\begin{split}
\sum_{\ell \leq V} \mu^2(\ell) (\ell, q) &\leq V \sum_{g \mid q} 1 \leq \tau(q) V.
\end{split}
\end{equation*}
This completes the proof. 
\end{proof}
\smallskip

Next, we state a version of Abel's inequality:
\begin{lem}[Abel's inequality]
\label{AI}
Let $\{a_n\}_{n \geq 1}$ be any sequence of complex numbers and let $\phi : [a,b] \to \mb{R}$ be non-negative and non-decreasing. Then
$$
\Biggl| \sum_{a < n \leq b}  a_n \phi(n)   \Biggr| \leq \phi(b) \cdot \Biggl(\max\lm_{c \in (a, b]} \biggl|\sum_{c < n \leq b} a_n \biggr| \Biggr)
$$
\end{lem}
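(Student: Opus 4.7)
The plan is to carry out a standard summation by parts (Abel summation). Define the tail sum $T(c) := \sum_{c < n \leq b} a_n$ for $c \in [a, b]$, and set $M := \max_{c \in (a, b]} |T(c)|$. Enumerate the integers lying in $(a, b]$ as $n_1 < n_2 < \cdots < n_k$ (so $n_k = \floor{b}$), and put $n_0 := n_1 - 1 \leq a$. Since $T(c)$ is constant on each interval between consecutive integers, $T(n_0)$ equals the full sum $\sum_{a < n \leq b} a_n$, and its modulus is still bounded by $M$ because we may take any $c \in (a, n_1] \subseteq (a,b]$ in the max.

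Using $a_{n_i} = T(n_{i-1}) - T(n_i)$ and $T(n_k) = 0$, a direct rearrangement gives the telescoping identity
$$
\sum_{a<n\leq b} a_n\, \phi(n) \; = \; T(n_0)\phi(n_1) \; + \; \sum_{i=1}^{k-1} T(n_i)\bigl(\phi(n_{i+1}) - \phi(n_i)\bigr).
$$
The hypotheses that $\phi$ is non-negative and non-decreasing make $\phi(n_1) \geq 0$ and each difference $\phi(n_{i+1}) - \phi(n_i) \geq 0$, so I can apply the triangle inequality with $|T(n_i)| \leq M$ on every term, obtaining
$$
\left|\sum_{a<n\leq b} a_n\, \phi(n)\right| \; \leq \; M\phi(n_1) + M\bigl(\phi(n_k) - \phi(n_1)\bigr) \; = \; M\phi(n_k) \; \leq \; M\phi(b),
$$
which is exactly the bound $\phi(b) \max_{c\in(a,b]} |T(c)|$ claimed.

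There is no genuine obstacle here; this is the cleanest possible Abel-summation argument, and the non-negativity together with the monotonicity of $\phi$ are precisely what is needed so that the coefficients appearing in the telescoped sum are non-negative, making the triangle inequality lose nothing. The only minor subtlety worth noting carefully is the boundary role of $T(n_0)$: since $n_0 \leq a$, the value $T(n_0)$ coincides with $T(c)$ for any $c \in (a, n_1)$ and is thus legitimately controlled by the maximum over $(a,b]$.
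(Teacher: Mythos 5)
Your proof is correct and follows essentially the same route as the paper: you rearrange the sum via Abel summation into a telescoping identity in which the coefficients $\phi(n_1)$ and $\phi(n_{i+1})-\phi(n_i)$ are all non-negative, then bound each tail sum $T(n_i)$ by the maximum and telescope to $\phi(n_k)\leq\phi(b)$. The only difference is cosmetic (your $n_1,\dots,n_k$ versus the paper's $K+1,\dots,K+L$), and you are a bit more explicit than the paper about why the boundary term $T(n_0)$ is legitimately controlled by the supremum over $(a,b]$, which is a nice touch but not a substantively different argument.
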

\begin{proof}
This is standard. 
Suppose that $A = \max_{c \in (a, b]}|\sum_{c < n \leq b} a_n|$ and assume that $(a,b] \cap \mb{N} = \{K+1, \ldots, K + L\}$. We can now write:
\begin{equation*}
\sum_{a < n \leq b} a_n  \phi(n) = \phi(K+1) \biggl(\sum_{j=K+1}^{K + L} a_j \biggr)  + \sum_{l=K + 1}^{K + L-1} \bigl( \phi(l+1) - \phi(l) \bigr) \biggl(\sum_{j = l + 1}^{K + L} a_j \biggr). 
\end{equation*}
Now, since $\phi$ is non-decreasing and all partial sums $\sum_{l+1}^{K + L} a_j$ are bounded by $A$, the above (in absolute value) is at most $A \biggl(\phi(K+1) + \sum\lm_{l=K+1}^{K+L-1} \bigl(\phi(l+1) - \phi(l)\bigr) \biggr) = A \phi(K+L) \leq A \phi(b)$. 
\end{proof}
\smallskip
To handle the trigonometric quantities, we use the Abel's inequality, to get
\begin{equation} 
\label{AIapp} 
\lf| \sum_{n \leq y/m} (\log n)^k \, e(mn\al) \rt| \leq \bigl( \log y/m \bigr)^k \cdot \min \lf (\fr{y}{m}, \fr{1}{|\sin \pi m \al|}\rt) .
\end{equation}

We need bounds for the sum over the RHS above over $m$'s in certain ranges. Various results are found in \cite{Vin54}, \cite{DR}, \cite{Tao}, \cite{HH}. We state the following result from Daboussi and Rivat \cite{DR} (sharper results are present in \cite{HH}, but they appear with a smoothing and have a $1/\sin^2$, whereas we need a $1/\sin$):
\begin{lem}[\text{\cite[Lemma 1]{DR}}]
\label{SY}
Let $\al$ be as in \eqref{AP0} and let $0 <z_1 < z_2$ with $z_2 - z_1 \leq q_0$. Then
$$
\sum_{z_1 < m \leq z_2} \min\lf( A, \fr{1}{|\sin \pi m \al|} \rt) \leq 2A + \fr{2 q_0}{\pi}\log 4q_0.
$$ 
\end{lem}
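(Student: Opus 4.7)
This is a classical Vinogradov-type inequality for trigonometric sums, and I would prove it by a residue-counting argument carried out in three main steps. The first step is to bound $1/|\sin\pi m\alpha|$ in terms of the distance to the nearest integer $\|m\alpha\|$: the elementary inequality $|\sin\pi x|\ge 2\|x\|$ gives $1/|\sin\pi m\alpha|\le 1/(2\|m\alpha\|)$, while the refined estimate $|\sin\pi x|\ge \pi\|x\|(1-O(\|x\|^2))$ for small $\|x\|$ is what ultimately produces the factor $2/\pi$ in the logarithmic term.

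The second and key step is to exploit the rational structure of $\alpha=a_0/q_0+\delta/y$. Writing $m=m_0+j$ with $0\le j\le L-1 < q_0$, the residues $ja_0\bmod q_0$ are all distinct since $(a_0,q_0)=1$, so the ``main parts'' $(m_0\alpha+ja_0/q_0)\bmod 1$ are $L$ distinct points on $\mb{R}/\mb{Z}$ at mutual spacing $1/q_0$. The perturbations $j\delta/y$ satisfy $|j\delta/y|\le (q_0-1)/(q_0Q_0)<1/Q_0\le 1/q_0$ and, crucially, are all of the same sign. Hence any arc of length $1/q_0$ on $\mb{R}/\mb{Z}$ contains at most two of the perturbed points $m\alpha\bmod 1$: two adjacent grid-points can coincide in a single arc under the perturbation, but a third would require a perturbation of magnitude $\ge 1/q_0$, which is excluded by the above bound. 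Equivalently, for each $k\ge 1$, at most two $m\in(z_1,z_2]$ have $\|m\alpha\|\in\bigl((2k-1)/(2q_0),(2k+1)/(2q_0)\bigr]$, and at most two have $\|m\alpha\|\le 1/(2q_0)$.

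Summing is then standard. Bound the at most two $m$'s with $\|m\alpha\|\le 1/(2q_0)$ trivially by $A$ each, contributing $2A$. For the annular contributions, the first step gives at most $q_0/(\pi(2k-1))$ per term, so
\[\sum_{z_1<m\le z_2}\min\!\Bigl(A,\tfrac{1}{|\sin\pi m\alpha|}\Bigr)\ \le\ 2A + \tfrac{2q_0}{\pi}\sum_{k=1}^{\lfloor q_0/2\rfloor}\tfrac{1}{2k-1}\ \le\ 2A + \tfrac{2q_0}{\pi}\log 4q_0,\]
using the standard harmonic estimate $\sum_{k=1}^{N}\tfrac{1}{2k-1}\le \tfrac12\log(4N)+O(1)$ with $N\le q_0/2$.

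The main obstacle is the boundary case $Q_0=q_0$, where the perturbation can approach the grid spacing itself. Here the strict inequality $|j\delta/y|\le(q_0-1)/q_0^2<1/q_0$ must be invoked, in combination with the same-sign property of the perturbations, to rule out a three-fold coincidence in any single $1/q_0$-arc; this is what keeps the coefficient of $A$ at $2$ and not larger. A secondary technicality is merging the crude bound $|\sin\pi x|\ge 2\|x\|$ (needed to control $\min(A,\cdot)$ at moderate $\|x\|$) with the sharp $|\sin\pi x|\sim \pi\|x\|$ (needed for the $1/\pi$ constant), which one handles by splitting at $\|m\alpha\|\asymp 1/A$.
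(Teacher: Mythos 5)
Your overall strategy — writing $m=m_0+j$, exploiting that the residues $ja_0\bmod q_0$ are distinct and that the perturbations $j\delta/y$ are all of the same sign with magnitude strictly less than $1/q_0$ — is the right one and is in the spirit of Daboussi--Rivat. But two of the concrete steps are incorrect as stated. First, the claim that \emph{at most two} $m$ have $\|m\alpha\|\in\bigl((2k-1)/(2q_0),(2k+1)/(2q_0)\bigr]$ is false: this set is a \emph{union of two} disjoint arcs of length $1/q_0$ (one near $0$, one near $1$), and your own argument only gives at most two points per arc, hence up to four per annulus. A concrete counterexample to the two-per-annulus claim: take $q_0=10$, $\alpha=3/10+0.0099$, $m\in\{0,\dots,9\}$; then $m\in\{4,7,9\}$ all have $\|m\alpha\|\in(0.15,0.25]$, so the annulus $k=2$ contains three points. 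What is actually true is the cumulative statement: an arc of length $L<1/2$ contains at most $\lceil Lq_0\rceil+1$ perturbed points, so at most $2k$ values of $m$ satisfy $\|m\alpha\|\le(2k-1)/(2q_0)$; equivalently the sorted values $v_1\le\cdots\le v_L$ of $\|m\alpha\|$ obey $v_{2k+1}>(2k-1)/(2q_0)$. Pairing $(v_{2k+1},v_{2k+2})$ and using that $t\mapsto 1/\sin(\pi t)$ is decreasing on $(0,1/2]$ then recovers the bound one would get from two per annulus, but this rearrangement step is essential and must be made explicit.

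Second, and more seriously, the per-term bound $1/|\sin\pi m\alpha|\le q_0/(\pi(2k-1))$ is wrong. Since $\sin\pi x<\pi x$ for $x\in(0,1/2]$, one always has $1/\sin(\pi x)>1/(\pi x)$: the elementary inequality $\sin\pi x\ge 2\|x\|$ gives only $1/|\sin\pi m\alpha|\le q_0/(2k-1)$, and summing this yields roughly $q_0\log(2q_0)$, which exceeds $(2q_0/\pi)\log 4q_0$ by a factor tending to $\pi/2$. The factor $1/\pi$ cannot be produced by a pointwise bound; it comes from convexity of $\csc$ and comparing $\sum_k\csc\bigl(\pi(2k-1)/(2q_0)\bigr)$ against $q_0\int\csc(\pi t)\,dt=\tfrac{q_0}{\pi}\log\tan(\pi t/2)$, evaluated from $t\approx 1/(2q_0)$ to $t\approx 1/2$, which is where $\tfrac1\pi\log(4q_0/\pi)$ emerges. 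Your final display happens to be consistent only because your subsequent passage from $\sum_{k\le q_0/2}1/(2k-1)$ to $\log 4q_0$ is itself wasteful by roughly a factor $2$, which masks the missing $\pi/2$; the conclusion is correct but the chain of inequalities you wrote does not establish it.
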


Next, we also need a bound where the sum over $m$-sum runs over numbers not divisible by $q_0$. The following (slightly modified) result from the work of Helfgott comes to our rescue:
\begin{lem}[\text{\cite[Lemma 11.1]{HH}}]
\label{SqmY}
Let $\al$ be as in \eqref{AP0} and let $0 < z_1 < z_2$ with $z_2 - z_1 \leq q_0$. If $z_2 \leq \fr{y}{2|\dl|q_0}$, then\footnote{The condition given in \cite[Lemma 11.1]{HH} is $z_2 \leq Q_0/2$, but all that is needed is $z_2 \leq \fr{y}{2|\dl|q_0}$.} 
$$ \sum\lm_{\st{z_1 < m \leq z_2 \\ q_0 \nmid m}} \fr{1}{|\sin \pi m \al|} \leq  q_0 \log eq_0 .$$
\end{lem}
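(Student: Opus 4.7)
The plan is to exploit the decomposition $m\al = m a_0/q_0 + m \dl/y$ and combine the arithmetic structure of the first summand with the smallness of the perturbation $m \dl/y$. For $m$ in an interval of length at most $q_0$ with $q_0 \nmid m$, the residues $r_m := m a_0 \bmod q_0$ are distinct elements of $\{1, 2, \ldots, q_0 - 1\}$, since $(a_0, q_0) = 1$; moreover, the hypothesis $z_2 \leq y/(2 |\dl| q_0)$ forces $|m \dl|/y \leq 1/(2 q_0)$ for every such $m$. Setting $s_m := \min(r_m, q_0 - r_m) \in \{1, 2, \ldots, \lfloor q_0/2 \rfloor\}$, one obtains
\begin{equation*}
\|m \al\| \;\geq\; \fr{\min(r_m, q_0 - r_m)}{q_0} - \fr{1}{2 q_0} \;=\; \fr{2 s_m - 1}{2 q_0}.
\end{equation*}

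Using $|\sin \pi m \al| = \sin(\pi \|m \al\|)$ together with the monotonicity of $\sin$ on $[0, \pi/2]$, and noting that the map $r \mapsto \min(r, q_0 - r)$ on $\{1, \ldots, q_0 - 1\}$ is at most two-to-one, the sum in question would be majorised by
\begin{equation*}
\sum\lm_{\st{z_1 < m \leq z_2 \\ q_0 \nmid m}} \fr{1}{|\sin \pi m \al|} \;\leq\; 2 \sum_{s = 1}^{\lfloor q_0/2 \rfloor} \fr{1}{\sin(\pi (2s-1)/(2q_0))}.
\end{equation*}

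For the final step, I would isolate the $s = 1$ term, which is at most $q_0$ via $\sin \pi y \geq 2y$, and handle the tail $\sum_{s \geq 2}$ by comparison with the integral $(q_0/\pi) \int_{\pi/q_0}^{\pi/2} d\theta/\sin\theta = (q_0/\pi) \log \cot(\pi/(2q_0))$ through the midpoint rule, which is valid by the convexity of $1/\sin \theta$ on $(0, \pi)$. This yields a bound of the shape $q_0 \log q_0 + O(q_0)$, and a careful optimisation of constants (for which the use of $\sin$ in place of the cruder $2 \|\cdot\|$ bound is essential) lands on $q_0 \log(eq_0)$. The main obstacle here is not conceptual but numerical: one must ensure that the constants in the integral comparison line up cleanly to produce exactly $\log(eq_0)$ rather than a larger multiple of $\log q_0$, especially for small values of $q_0$, where asymptotic estimates are weakest and a case-by-case verification may be needed.
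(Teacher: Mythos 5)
The paper does not prove this lemma—it is imported, with a slightly relaxed hypothesis, from Helfgott's \cite[Lemma 11.1]{HH}—so you are necessarily reconstructing an argument. Your reduction steps are sound: distinct $m \in (z_1, z_2]$ with $q_0 \nmid m$ give distinct nonzero residues $r_m = m a_0 \bmod q_0$ (since $|m - m'| < z_2 - z_1 \leq q_0$ forces $m \not\equiv m' \pmod{q_0}$); the hypothesis $z_2 \leq y/(2|\delta| q_0)$ gives $|m\delta|/y \leq 1/(2q_0)$; the triangle inequality yields $\|m\alpha\| \geq s_m/q_0 - 1/(2q_0)$; and passing from $r$ to $s = \min(r, q_0 - r)$, which is at most two-to-one, gives the reduction to $2\sum_{s=1}^{\lfloor q_0/2 \rfloor} \csc\bigl(\pi(2s-1)/(2q_0)\bigr)$. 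Convexity of $\csc$ on $(0,\pi)$ does justify the midpoint-rule comparison you use for the tail.

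The gap is in the numerical closure you sketch. Bounding the $s=1$ term by $q_0$ via $\sin \pi y \geq 2y$ is too lossy (it gives away a factor close to $\pi/2$), and the resulting bound $2\bigl(q_0 + (q_0/\pi)\log \cot(\pi/(2q_0))\bigr)$ actually \emph{exceeds} $q_0 \log(e q_0)$ for $q_0 \in \{2, 4, 5, 6\}$: for $q_0 = 4$ it gives $\approx 10.24$ versus $4\log(4e) \approx 9.55$, for $q_0 = 5$ it gives $\approx 13.58$ versus $\approx 13.05$, and for $q_0 = 2$ it gives $4$ versus $\approx 3.39$. You correctly flag that small $q_0$ needs care, but the fix is not a peripheral check—the $s=1$ estimate is structurally weak. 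Replacing $\sin \pi y \geq 2y$ by the sharper chord bound from concavity on $[0,\pi/4]$, namely $\sin(\pi/(2q_0)) \geq \sqrt{2}/q_0$ for $q_0 \geq 2$, makes the $s=1$ term $\leq q_0/\sqrt{2}$, and one then checks that
\begin{equation*}
2\Bigl(\tfrac{q_0}{\sqrt{2}} + \tfrac{q_0}{\pi}\log\cot\tfrac{\pi}{2q_0}\Bigr) \leq q_0\log(e q_0)
\end{equation*}
holds for every $q_0 \geq 2$ (asymptotically the left side is $\approx 0.637\, q_0\log q_0 + 1.13\, q_0$, which is below $q_0\log q_0 + q_0$ once $\log q_0 \geq 0.35$, and the remaining cases $q_0 \leq 7$ verify directly). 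With that single refinement your approach delivers the stated bound; as written, it does not.
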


The next lemma is a direct consequence the Euler summation formula:
\begin{lem}
\label{Yrho}
Let $X > \rho > 0$ and $r \geq 0$. Then
\begin{equation*}
\begin{split}
\sum_{0 \leq m \leq X -  \rho} \lf( \log \fr{X}{m + \rho} \rt)^r &\leq r! X \, + \, \lf( \log X/\rho \rt)^r, \\
\sum_{0 \leq m \leq X - \rho} \fr{1}{m + \rho} \lf( \log \fr{X}{m + \rho} \rt)^r &\leq \fr{(\log X/\rho)^{r+1}}{r+1} \, + \, \rho^{-1} \lf( \log X/\rho \rt)^r. 
\end{split}
\end{equation*}
\end{lem}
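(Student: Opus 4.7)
The plan is to reduce both bounds to standard integral comparisons, exploiting that both summands are decreasing functions of $m$ on the interval $[0, X-\rho]$.

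For the first estimate, set $f(t) = \bigl(\log\tfrac{X}{t+\rho}\bigr)^r$. Since $f$ is non-negative and decreasing on $[0, X-\rho]$, the usual comparison yields
$$\sum_{0 \leq m \leq X - \rho} f(m) \leq f(0) + \int_0^{X-\rho} f(t)\, dt = (\log X/\rho)^r + \int_0^{X-\rho}\Bigl(\log\tfrac{X}{t+\rho}\Bigr)^r dt.$$
In the integral I substitute $u = X/(t+\rho)$, which converts it to $X\int_1^{X/\rho} u^{-2}(\log u)^r\, du$, and then $v = \log u$ transforms this into $X\int_0^{\log X/\rho} v^r e^{-v}\, dv \leq X \cdot \Gamma(r+1) = r!\, X$. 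Combining gives the first inequality.

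For the second estimate, set $g(t) = \frac{1}{t+\rho}\bigl(\log\tfrac{X}{t+\rho}\bigr)^r$, which is again non-negative and decreasing on $[0, X-\rho]$. The same comparison gives
$$\sum_{0 \leq m \leq X - \rho} g(m) \leq g(0) + \int_0^{X-\rho} g(t)\, dt = \rho^{-1}(\log X/\rho)^r + \int_0^{X-\rho} \frac{(\log X/(t+\rho))^r}{t+\rho}\, dt.$$
The substitution $v = \log \tfrac{X}{t+\rho}$ (so $dv = -\tfrac{dt}{t+\rho}$) turns the remaining integral into $\int_0^{\log X/\rho} v^r\, dv = \frac{(\log X/\rho)^{r+1}}{r+1}$, which produces the claimed bound.

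There is no real obstacle here: the monotonicity of the summand in $m$ makes the elementary sum-vs-integral comparison exact enough, and both integrals evaluate in closed form after a natural change of variable (logarithmic substitution, yielding either the Gamma integral or a monomial primitive). The hint to "Euler summation" is heavier machinery than needed — the two error/boundary contributions from the first-term correction $f(0)$ and $g(0)$ are exactly the additive $(\log X/\rho)^r$ and $\rho^{-1}(\log X/\rho)^r$ terms appearing in the statement.
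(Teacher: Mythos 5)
Your proof is correct and complete. The paper's own ``proof'' of Lemma \ref{Yrho} is just a citation to Lemma 2.20 of the author's thesis, so there is no argument in the paper to compare against; you supply the missing details with the standard monotone sum-vs-integral comparison. Both steps check out: $f(t)=(\log\tfrac{X}{t+\rho})^r$ and $g(t)=\tfrac{1}{t+\rho}(\log\tfrac{X}{t+\rho})^r$ are indeed non-negative and non-increasing on $[0,X-\rho]$ (for $g$, a product of two non-negative decreasing factors), so $\sum_{0\le m\le X-\rho} f(m) \le f(0)+\int_0^{X-\rho} f$ and likewise for $g$; the substitutions $u=X/(t+\rho)$, then $v=\log u$ reduce the first integral to the incomplete Gamma integral $X\int_0^{\log X/\rho} v^r e^{-v}\,dv \le r!\,X$, and the substitution $v=\log\tfrac{X}{t+\rho}$ evaluates the second integral exactly to $(\log X/\rho)^{r+1}/(r+1)$. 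One small side note: the ``Euler summation'' remark you push back against actually appears in the paper in connection with the subsequent Lemma \ref{dyadic}, not this one, so there is nothing there to contradict; but your broader point stands that the elementary comparison suffices here.
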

\begin{proof}
This is a special case of \cite[Lemma 2.20]{PS}. 
\end{proof}

\smallskip

This leads to the following bounds:
\begin{lem}
\label{mYX}
Let $\al$ be as in \eqref{AP0}, $3 < Y \leq  X$ and $r \geq 0$. Then:  
\begin{itemize}
\item[(a)]
%
\begin{equation*}
\begin{split}
\sum\lm_{\st{m \leq Y}} (\log Y/m)^r \min\lf( \fr{y}{m}, \fr{1}{|\sin \pi m \al|} \rt)  &\leq  \log 4q_0\lf( \fr{2 r!}{\pi} Y \, + \, 2q_0 (\log Y)^r \rt) \\
&\quad \, + \, \fr{2y}{q_0}\bigl( \log^+ 2Y/q_0 \bigr)^r \lf( \fr{\log^+ 2Y/q_0}{r+1}  + 2 \rt).
\end{split}
\end{equation*}

\item[(b)]
If $Y \leq \fr{y}{2 |\dl| q_0}$, we have
%
$$
\sum\lm_{\st{m \leq Y \\ q_0 \nmid m}} \fr{(\log Y/m)^r}{|\sin \pi m \al|}  \leq \log eq_0 \, \bigl( r! \, Y + q_0 (\log Y)^r \bigr).
$$
\end{itemize}
\end{lem}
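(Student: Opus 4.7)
For both parts, the plan is to partition the summation range $(0,Y]$ into consecutive arithmetic blocks $B_k = (kq_0,(k+1)q_0]$ for $k=0,1,\dots$ (truncating the last block at $Y$), whose length of at most $q_0$ is exactly the hypothesis needed to invoke Lemma \ref{SY} or Lemma \ref{SqmY}. Since the weight $(\log Y/m)^r$ is non-increasing in $m$, I bound it on $B_k$ by its left-endpoint value $(\log Y/(kq_0+1))^r$ and pull it outside the block-sum. Summing the resulting block contributions over $k$ is then finished by Lemma \ref{Yrho} with the parameter choice $X = Y/q_0$, $\rho = 1/q_0$, tailored so that $X/\rho = Y$ and $(m+\rho)q_0 = mq_0+1$, matching the block decomposition.

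Part (b) is the cleaner case. The hypothesis $Y \leq y/(2|\delta|q_0)$ is exactly what Lemma \ref{SqmY} requires on each $B_k$, yielding $\sum_{m \in B_k,\ q_0 \nmid m} 1/|\sin \pi m \alpha| \leq q_0 \log eq_0$. Weighting and summing produces $q_0 \log eq_0 \sum_{k\geq 0}(\log Y/(kq_0+1))^r$, and Lemma \ref{Yrho}(a) bounds the last sum by $r! Y/q_0 + (\log Y)^r$, giving the asserted $\log eq_0\,(r!\,Y + q_0(\log Y)^r)$.

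Part (a) is handled similarly via Lemma \ref{SY} with the constant $A_k = y/(kq_0+1)$ (the supremum of $y/m$ on $B_k$), so that $\min(y/m,1/|\sin \pi m \alpha|) \leq \min(A_k, 1/|\sin \pi m \alpha|)$ throughout $B_k$. This yields the per-block bound $2A_k + (2q_0/\pi)\log 4q_0$. The $(2q_0/\pi)\log 4q_0$ contribution, summed against the weight via Lemma \ref{Yrho}(a) exactly as in (b), produces $\log 4q_0\bigl(\tfrac{2r!}{\pi}Y + 2q_0(\log Y)^r\bigr)$. The remaining contribution $2y\sum_{k\geq 0} (\log Y/(kq_0+1))^r/(kq_0+1)$ is then controlled by Lemma \ref{Yrho}(b); the natural scale $Y/q_0$ encoded in the parameter choice is what causes the factor $\log^+(2Y/q_0)$ to appear in the final bound $\tfrac{2y}{q_0}(\log^+ 2Y/q_0)^r\bigl(\log^+ 2Y/q_0/(r+1)+2\bigr)$.

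\textbf{Main obstacle.} The $k=0$ block in (a) is the one delicate point, since a crude application of Lemma \ref{SY} on $B_0$ with $A_0 = y$ would contribute a stray $2y(\log Y)^r$ term that is too large to fit the claim when $y \gg q_0$. I expect the proof to absorb this either by treating $B_0$ separately (for instance by Abel summation on $B_0$, or by grouping the short-range $m \leq q_0$ together with the $\log 4q_0$-part of the first block's estimate) so that the $y$-dependent output involves $\log^+(2Y/q_0)$ instead of $\log Y$. Once this separation is carried out, the remaining work is routine manipulation of the estimates from Lemma \ref{Yrho}.
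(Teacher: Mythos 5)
Your treatment of part (b) is essentially correct and matches the paper's proof in substance: blocks of length $q_0$, Lemma \ref{SqmY} per block, then Lemma \ref{Yrho}(a). Part (a), however, has a genuine gap which you have correctly diagnosed but not resolved.

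The problem is exactly the one you flag: on the first block, Lemma \ref{SY} forces $A_0 = y$, and the resulting $2y(\log Y)^r$ cannot be absorbed into the stated bound. You list candidate fixes (Abel summation on $B_0$, or merging $B_0$'s constant term with the rest), but the actual resolution is different and is the key idea of the proof. The paper \emph{does not use Lemma \ref{SY} on the initial segment at all}; instead, it splits the sum as $\sum_{m \leq q_0/2} + \sum_{j \geq 0} \sum_{q_0(j+1/2) < m \leq q_0(j+3/2)}$. On $m \leq q_0/2$ one has $q_0 \nmid m$, so Lemma \ref{SqmY} applies (note the required condition $q_0/2 \leq y/(2|\delta| q_0)$ follows automatically from \eqref{AP0}: $q_0^2|\delta| \leq q_0 y/Q_0 \leq y$), yielding $q_0 \log eq_0$ with no $y$-dependence. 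The shifted blocks then have left endpoints bounded away from $0$ by $q_0/2$, so $A_j = 2y/(q_0(2j+1))$ never blows up, and Lemma \ref{Yrho} applied with $X = Y/q_0$ and $\rho = 1/2$ (not your $\rho = 1/q_0$) produces the $\log^+ 2Y/q_0$ factors that match the claim. Your parameter choice $\rho = 1/q_0$ is also a problem in its own right: with $X/\rho = Y$ you would get $(\log Y)^{r+1}$ in the $y$-term, which is strictly worse than $(\log^+ 2Y/q_0)^{r+1}$ and incompatible with the stated bound when $Y$ is close to $q_0$. In short, the missing idea is to invoke the \emph{other} trigonometric lemma (Lemma \ref{SqmY}, the $q_0 \nmid m$ bound, which carries no $A$-term) on the short initial segment, and to shift the remaining blocks by half a period so that Lemma \ref{SY}'s $A$-term stays of size $O(y/q_0)$ throughout.
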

\begin{proof}
For (a), we follow the proof of \cite[Lemma 3]{DR}. Split the given sum as 
$$
\sum_{m \leq Y} = \sum_{m \leq q_0/2} + \sum_{j=0}^{\floor{\fr{Y}{q_0} - \fr{1}{2}}} \sum_{q_0(j + 1/2) < m \leq q_0(j + 3/2)} = T_1 \, + \, T_2
$$
In the first sum (where $m \leq q_0/2$), we can use Lemma \ref{SqmY}, since $q_0 \nmid m$ in this range. This would imply $|T_1| \leq q_0 \log eq_0 \, (\log Y)^r$. 

Now, for $T_2$ (which survives only when $Y \geq q_0/2$), we use Lemma \ref{SY}, to obtain:
\begin{equation*}
\begin{split}
|T_2| & \leq \sum\lm_{j=0}^{\floor{\fr{Y}{q_0} - \fr{1}{2}}} \biggl(\log^+ \fr{Y/q_0}{j + 1/2} \biggr)^r
\lf( \fr{2y}{q_0(j + 1/2)} + \fr{2q_0}{\pi} \log 4q_0 \rt)\\
&\leq \fr{2y}{q_0} \lf( \fr{(\log^+ 2Y/q_0)^{r+1}}{r+1}  + 2 (\log^+ 2Y/q_0)^r \rt) \, + \, \fr{2q_0}{\pi} \log 4q_0 
\lf( \fr{r! Y}{q_0}  + (\log^+ 2Y/q_0)^r \rt)\\
&\leq \fr{2 \log 4q_0}{\pi} \bigl( r! Y \, + \, q_0 (\log Y)^r \bigr) \, + \, \fr{2y}{q_0}\bigl( \log^+ 2Y/q_0 \bigr)^r \lf( \fr{\log^+ 2Y/q_0}{r+1}  + 2 \rt) ,
\end{split}
\end{equation*}
where we used Lemma \ref{Yrho} with $X = Y/q_0$ and $\rho = 1/2$ in the second line. 

Combining this with the bound for $|T_1|$, we prove (a). 

For (b), write the given sum as:
\begin{equation*}
\begin{split}
&\quad \sum\lm_{\st{m \leq q_0 \\ q_0 \nmid m}} \fr{(\log Y/m)^r}{|\sin \pi m \al|} \, + \, \sum\lm_{\st{q_0 < m \leq Y \\ q_0 \nmid m}} \fr{(\log Y/m)^r}{|\sin \pi m \al|}
\leq q_0 \log eq_0  \lf( (\log Y)^r  +  \sum_{1 \leq j \leq Y/q_0} \lf( \log \fr{Y}{jq_0} \rt)^r \rt) \\ 
&\leq \log eq_0 \, \bigl(r! \, Y  + q_0 (\log Y)^r \bigr),
\end{split}
\end{equation*}
where we use $\sum_{j \leq Y/q_0} (\log Y/jq_0)^{r} \leq \fr{Y}{q_0} r!$. This completes the proof. 
\end{proof}
\smallskip
We now refer to the previously discussed alternate approximation Lemma\footnote{A similar result is present in the book of Vaughan \cite[Pg 25, Ex 2]{RCVb}.} from \cite{HH}. 
\begin{lem}[\text{\cite[Lemma 2.2]{HH}}]
\label{alt}
Let $\al$ be as given in \eqref{AP0} with $\dl \neq 0$. Then there exists an approximation 
$$\al = a_0'/q_0' + \dl'/y, \quad \text{with } \ \, (a_0', q_0') = 1,\quad \fr{y}{2|\dl|q_0} \leq q_0' \leq \fr{y}{|\dl|q_0} = Q_{\dl}' \quad \text{ and } \quad |\dl'|/y \leq 1/q_0' Q_{\dl}',
$$
\end{lem}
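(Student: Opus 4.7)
The plan is to apply Dirichlet's approximation theorem with parameter $Q = Q_{\dl}' := y/(|\dl|q_0)$. Before invoking it, I would first verify that $Q_{\dl}' \geq q_0$: indeed the hypothesis $|\dl|/y \leq 1/(q_0 Q_0)$ rearranges to $Q_{\dl}' \geq Q_0 \geq q_0$. A useful observation for what follows is that the original pair $(a_0, q_0)$ is itself a valid Dirichlet approximant at level $Q_{\dl}'$, since by the very definition of $Q_{\dl}'$ one has $|q_0 \al - a_0| = q_0|\dl|/y = 1/Q_{\dl}'$.

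Dirichlet's theorem then produces coprime integers $a_0', q_0'$ with $1 \leq q_0' \leq Q_{\dl}'$ and $|\al - a_0'/q_0'| \leq 1/(q_0' Q_{\dl}')$. Setting $\dl' := y(\al - a_0'/q_0')$ gives both the bound $|\dl'|/y \leq 1/(q_0' Q_{\dl}')$ and the upper bound on $q_0'$ demanded by the lemma. The only remaining task is to arrange $q_0' \geq Q_{\dl}'/2$, for which I would split on the size of $q_0$. If $q_0 > Q_{\dl}'/2$, simply take $(a_0', q_0', \dl') := (a_0, q_0, \dl)$, which lies in the desired range by the observation above. If instead $q_0 \leq Q_{\dl}'/2$, one chooses $(a_0', q_0')$ so that $a_0'/q_0' \neq a_0/q_0$ as reduced fractions; then $|a_0/q_0 - a_0'/q_0'| \geq 1/(q_0 q_0')$, and combining with the triangle inequality
\begin{equation*}
\fr{1}{q_0 q_0'} \leq |\al - a_0/q_0| + |\al - a_0'/q_0'| \leq \fr{1}{q_0 Q_{\dl}'} + \fr{1}{q_0' Q_{\dl}'} = \fr{q_0 + q_0'}{q_0 q_0' Q_{\dl}'}
\end{equation*}
yields $Q_{\dl}' \leq q_0 + q_0'$, i.e., $q_0' \geq Q_{\dl}' - q_0 \geq Q_{\dl}'/2$, as required.

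The delicate point—the step I expect to require the most care—is the last one: exhibiting a Dirichlet approximant $(a_0', q_0')$ genuinely distinct from $(a_0, q_0)$ when $q_0 \leq Q_{\dl}'/2$. Here the hypothesis $\dl \neq 0$ is essential, for it guarantees $\al \neq a_0/q_0$ and so the continued fraction expansion of $\al$ has convergents strictly beyond $a_0/q_0$. Using the best-approximation property of convergents (and, if need be, an appropriate intermediate fraction), one produces a rational approximant distinct from $a_0/q_0$ lying in $(Q_{\dl}'/2, Q_{\dl}']$ and still satisfying the Dirichlet-quality bound $|\al - a_0'/q_0'| \leq 1/(q_0' Q_{\dl}')$. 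The narrowness of the interval $(Q_{\dl}'/2, Q_{\dl}']$ makes the construction somewhat subtle, but the gap between the scale $q_0$ at which $\al$ is known to be extremely well approximated and the scale $Q_{\dl}'$ at which the next-level Dirichlet approximant is being sought is precisely what drives the existence.
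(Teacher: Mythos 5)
Your outline is sound, and you have correctly located the one genuine gap yourself: in the case $q_0 \leq Q_{\dl}'/2$, the existence of a Dirichlet-quality approximant \emph{distinct from} $a_0/q_0$ is asserted but not established. Dirichlet's theorem only guarantees \emph{some} approximant with denominator $\leq Q_{\dl}'$; it gives you no control to avoid landing back on $(a_0, q_0)$, which is itself a perfectly valid output since $|\al - a_0/q_0| = 1/(q_0 Q_{\dl}')$. Your triangle-inequality step is correct and shows that \emph{if} such a distinct approximant exists then its denominator is at least $Q_{\dl}' - q_0 \geq Q_{\dl}'/2$; but the existence, which you flag as "delicate," is the entire content of the lemma in this case and must be built by hand rather than gestured at via "best-approximation properties and intermediate fractions." Also beware the appeal to Legendre's criterion you implicitly make: it requires the strict inequality $|\al - a_0/q_0| < 1/(2q_0^2)$, which at the boundary $q_0 = Q_{\dl}'/2$ becomes equality, so one cannot even safely assert that $a_0/q_0$ is a convergent of $\al$.

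Here is a short construction that closes the gap (and in fact renders the call to Dirichlet's theorem unnecessary). Assume $\dl > 0$, so $\al = a_0/q_0 + 1/(q_0 Q_{\dl}')$; the case $\dl < 0$ is symmetric. Since $(a_0, q_0) = 1$, choose integers $b, c$ with $b q_0 - a_0 c = 1$ and $0 < c \leq q_0$, then set $a_0' = b + t a_0$, $q_0' = c + t q_0$ with $t \geq 0$ maximal subject to $q_0' \leq Q_{\dl}'$ (such $t$ exists because $c \leq q_0 \leq Q_{\dl}'$). Then $a_0' q_0 - a_0 q_0' = 1$, so $(a_0', q_0') = 1$, and maximality of $t$ gives $Q_{\dl}' - q_0 < q_0' \leq Q_{\dl}'$; in particular $q_0' \geq Q_{\dl}'/2$ whenever $q_0 \leq Q_{\dl}'/2$. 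Moreover $a_0'/q_0' - a_0/q_0 = 1/(q_0 q_0') \geq 1/(q_0 Q_{\dl}') = \al - a_0/q_0$, so $a_0/q_0 < \al \leq a_0'/q_0'$ and
\begin{equation*}
\lf| \al - \fr{a_0'}{q_0'} \rt| = \fr{1}{q_0 q_0'} - \fr{1}{q_0 Q_{\dl}'} = \fr{Q_{\dl}' - q_0'}{q_0 q_0' Q_{\dl}'} < \fr{q_0}{q_0 q_0' Q_{\dl}'} = \fr{1}{q_0' Q_{\dl}'},
\end{equation*}
which is the required bound. When $q_0 > Q_{\dl}'/2$ one simply keeps $(a_0', q_0', \dl') = (a_0, q_0, \dl)$, as you proposed.
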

\smallskip
Now, using the above lemmas, we prove the following results on the type-I contributions coming from those $m$'s not divisible by $q_0$. We have already handled the $m$'s divisible by $q_0$ in Proposition \ref{hqm}. Again, the next Lemma may be compared to \cite[Lemma 11.10]{HH}.
\begin{lem}
\label{sumdD}
Let $\al$ be as in \eqref{AP0} with $q_0 \mid q$ and let $D \gg R \dl_0 q_0$ as well as $R \gg x^{\eta/4}$. Then
$$
\sum\lm_{\st{d \leq D \\ q_0 \nmid d }} \log \fr{D}{d} \sum_{n \leq y/d} (\log n)^k \, e(n d \al) 
\ll D \, (\log y)^k \cdot \begin{cases} \log eq_0, & \text{if } D \leq \fr{y}{2|\dl|q_0}, \\ (\log D)^2, & \text{if } D > \fr{y}{2 |\dl| q_0}. \end{cases}
$$
\end{lem}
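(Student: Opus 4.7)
The plan is to first apply Abel's inequality via \eqref{AIapp} to the inner sum over $n$, obtaining
$$\lf| \sum_{n \leq y/d} (\log n)^k \, e(nd\al) \rt| \leq (\log y)^k \min\!\lf( \fr{y}{d},\, \fr{1}{|\sin \pi d \al|} \rt).$$
After extracting the common factor $(\log y)^k$, the task reduces to bounding the non-negative weighted sum
$$T := \sum\lm_{\st{d \leq D \\ q_0 \nmid d}} \log \fr{D}{d}\, \min\!\lf( \fr{y}{d},\, \fr{1}{|\sin \pi d \al|} \rt)$$
and matching the two regimes demarcated by the threshold $D_0 := y/(2|\dl|q_0)$.

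In the first case ($D \leq D_0$), I would drop the $y/d$ term and apply Lemma \ref{mYX} (b) with $Y = D$ and $r = 1$ directly: the hypothesis $Y \leq y/(2|\dl|q_0)$ of that lemma is exactly the case assumption, so we immediately recover $T \leq \log eq_0\, (D + q_0 \log D)$, and reinserting $(\log y)^k$ gives the stated bound.

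In the second case ($D > D_0$), the original approximation no longer controls $1/|\sin \pi d\al|$ uniformly over $d \leq D$, so I would invoke Lemma \ref{alt} to obtain an alternate approximation $\al = a_0'/q_0' + \dl'/y$ with $D_0 \leq q_0' \leq 2 D_0 = y/(|\dl|q_0)$. I would then discard the $q_0 \nmid d$ condition (this only enlarges $T$) and apply Lemma \ref{mYX} (a) with $q_0$ replaced by $q_0'$, $Y = D$, $r = 1$. The bound $q_0' \leq 2D_0 \leq 2D$ gives $\log 4q_0' \ll \log D$ and $q_0' \log D \ll D \log D$, so the first term of Lemma \ref{mYX} (a) contributes $O(D (\log D)^2)$; the bound $q_0' \geq D_0$ gives $2y/q_0' \leq 4|\dl|q_0$ and $\log^+(2D/q_0') \leq \log(2D/D_0) \ll \log D$, yielding a contribution $O(|\dl|q_0 (\log D)^2)$. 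Summing reproduces the required $(\log D)^2 (D + |\dl|q_0)$. The main obstacle is the structural change in the bound when crossing $D_0$: once $d|\dl|/y$ exceeds $1/(2q_0)$, the denominator $q_0$ no longer governs oscillation of $e(nd\al)$, and the correct denominator on this range is $q_0' \asymp y/(|\dl|q_0)$; the switch to the alternate approximation is precisely what converts a $\log eq_0 \cdot q_0$-shape into a $(\log D)^2 \cdot |\dl|q_0$-shape, and the fact that Lemma \ref{mYX} (a) carries no coprimality restriction makes the discarding of $q_0 \nmid d$ costless.
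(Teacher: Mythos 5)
Your proposal is correct and follows essentially the same path as the paper's proof: apply \eqref{AIapp}, then in the regime $D \leq y/(2|\dl|q_0)$ invoke Lemma \ref{mYX}~(b) directly, and in the regime $D > y/(2|\dl|q_0)$ switch to the alternate approximation of Lemma \ref{alt} and invoke Lemma \ref{mYX}~(a), using $q_0' \geq y/(2|\dl|q_0)$ and $q_0' \leq y/(|\dl|q_0) < 2D$ to control the two contributions. You merely spell out a couple of small steps that the paper leaves implicit — dropping the $y/d$ term in case one, and discarding the $q_0 \nmid d$ condition by non-negativity in case two — but the decomposition, the choice of lemmas, and the resulting estimates are the same.
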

\begin{proof}
Let us denote the given sum by $S$. Suppose first that $D \leq \fr{y}{2 |\dl| q_0}$. In this case, we may use Lemma \ref{mYX} (b) (with $r=1$ and $Y =D$), to get
$$
|S| \leq \log eq_0 \, (\log y)^k \lf( D + q_0 \log D \rt)  \ll D \, \log eq_0 \, (\log y)^k .
$$

So, assume that $D > \fr{y}{2 |\dl| q_0}$. We seek an alternate approximation (Lemma \ref{alt}) for $\al$. This gives us a new approximation $\al = a_0'/q_0' + \dl'/y$, with $\fr{y}{2 |\dl| q_0} \leq q_0' \leq \fr{y}{|\dl|q_0} < 2 D$ (and $\al$ satisfies \eqref{AP0} with this new approximation). We now apply Lemma \ref{mYX} (a) (with $Y  = D$, $r = 1$ and $\al = a_0'/q_0' + \dl'/y$) to get
\begin{equation*}
\begin{split}
|S| &\leq 2 \log 4q_0' \lf( D/\pi + q_0' \log D \rt) (\log y)^k \, +  \, \fr{2y}{q_0'} \log \fr{2D}{q_0'} \lf( 2 + \log \fr{2D}{q_0'} \rt) (\log y)^k  \\
&\ll  (\log y)^k (\log D)^2  \lf( D + |\dl|q_0 \rt) 
\ll D \, (\log D)^2 \, (\log y)^k .
\end{split}
\end{equation*}
\end{proof}
%
\begin{cor0}
\label{hqnm}
Let $\al$ be as in \eqref{AP0} with $q_0 \mid q$ and $q_0 \leq Q_0/R$. Suppose $U_1 \gg R^2 \dl_0 q_0$ and that $R \gg x^{\eta/4}$. Then, for all $k  \geq 0$, we have:
$$
\lf| \sum\lm_{\st{ m \\ q_0 \nmid m }} h(m)  \sum_{mn \leq y} (\log n)^k \, e(mn\al) \rt| \ll \fr{q}{\vp(q)} \fr{U_1 R \, (\log y)^k}{\log R \, \log \fr{U_1}{U}} \cdot 
\begin{cases} 
\log eq_0, & U_1R \leq \fr{y}{2 |\dl| q_0}, \\
\log^2 U_1, & U_1R > \fr{y}{2 |\dl| q_0}.
 \end{cases}
$$
\end{cor0}

\begin{proof}
First, we write $m = [gd_1, gd_2] = gd_1d_2$ (assuming $(d_1, d_2) = 1$) and we later ignore the condition $(d_1, d_2) = 1$. Note that $|\tht'(gd_2)| \leq \fr{\log U_1/(gd_2)}{\log U_1/U}$. Hence, the sum is at most
\begin{equation}
\begin{split}
\label{t3mp1}
&\fr{1}{\log \fr{U_1}{U}}
\sum\lm_{\st{g \leq R \\ (g, q) = 1}} \sum\lm_{\st{d_1 \leq R/g \\ (d_1, gq) = 1}} |\lb(gd_1)| \sum\lm_{\st{ d_2 \leq U_1/g \\ (d_2, g) = 1 \\ q_0 \nmid d_2}} \log \fr{U_1/g}{d_2} \lf| \sum\lm_{n \leq \fr{y}{gd_1d_2}} (\log n)^k \, e(n d_2 (gd_1 \al)) \rt| . 
\end{split}
\end{equation}
Now, since $(gd_1, q) = 1 = (gd_1, q_0)$, it follows that 
$$
gd_1\al = a_1/q_0 + O^*\lf(\fr{|\dl|}{y/(gd_1)} \rt) , \quad \text{with } \, (a_1, q_0) = 1, \ \ \fr{|\dl|}{y/(gd_1)} \leq \fr{1}{q_0(Q_0/R)} \ \, \text{and} \ \, q_0 \leq Q_0/R . 
$$ 
Thus, $gd_1\al$ satisfies \eqref{AP0} with $y/(gd_1)$ in place of $y$ and $Q_0/R$ in place of $Q_0$. 

Now, if $U_1R \leq \fr{y}{2 |\dl| q_0}$, it follows that $\fr{U_1}{g} \leq \fr{y/(gd_1)}{2 |\dl|q_0}$. We can then apply the first bound of Lemma \ref{sumdD} to the inner sum over $d_2$ in \eqref{t3mp1} (with $D = U_1/g \gg R\dl_0 q_0$, with $y/(gd_1)$ replacing $y$ and $gd_1\al$ replacing $\al$), so that \eqref{t3mp1} is at most
\begin{equation*}
\begin{split}
&\ll \fr{\log eq_0 \, (\log y)^k }{\log \fr{U_1}{U}}\sum\lm_{\st{g \leq R \\ (g, q) = 1}} \sum\lm_{\st{d_1 \leq R/g}} |\lb(gd_1)|
\cdot  \fr{U_1}{g} 
\ll \fr{q}{\vp(q)} \fr{U_1 R \, \log eq_0 \, (\log y)^k}{\log R \, \log \fr{U_1}{U}},
\end{split}
\end{equation*}
%
%
where we used $G_q(R) \geq \fr{\vp(q)}{q} \log R$. 

And, in case $U_1R > \fr{y}{2|\dl|q_0}$, both $\fr{U_1}{g} \leq \fr{y/(gd_1)}{2 |\dl| q_0}$ or $\fr{U_1}{g} > \fr{y/(gd_1)}{2 |\dl| q_0}$ could possibly hold. We note that the second bound $D \, (\log D)^2 \, (\log y)^k$ in Lemma \ref{sumdD} remains larger. Hence, just like before, \eqref{t3mp1} in this case will be at most:
$$
\ll \fr{q}{\vp(q)} \fr{U_1 R \, (\log U_1)^2 \, (\log y)^k}{\log R \, \log \fr{U_1}{U}}. 
$$
\end{proof}
\smallskip

Now, we give the proof of Theorem \ref{TI}
\subsection{Proof of Theorem \ref{TI}}
Let us first prove (a). We have
\begin{equation}
\begin{split}
\label{SI1sp}
S_{\tI, 1}(\al; x) &= \sum_m h(m) \sum_{mn \leq x} (\log n) \, e(mn \al) \\
&= \sum_{m \cg 0 \mm{q}} h(m) \sum_{mn \leq x} (\log n) \, e(mn \al) \, + \, \sum\lm_{\st{ m \\ q \nmid m }} h(m) \sum_{mn \leq x} (\log n) \, e(mn \al)\\ 
&= S_{\tI, 1}^{'}(\al; x) \, + \, S_{\tI, 1}^{''}(\al; x).
\end{split}
\end{equation}

Now, note that $m \leq U_1R \leq \fr{x}{8\dl_0}$ from \eqref{CI}. Therefore, applying Proposition \ref{hqm} (with $k = 1$, $y = x$, $q_0 = q$ and $f = h$) followed by an application of Lemma \ref{qhm} (with $q_0 = q$, $y = x$ and $\rho = \eta/2$), the first part $S_{\tI, 1}'$ is at most\footnote{The condition $U \geq (qR)^{1 + \eta/2}$, $U < x$ of Lemma \ref{qhm} (a) and (b) is true from \eqref{CI}.}
\begin{equation}
\label{SI1p}
\begin{split}
|S_{\tI,1}^{'}(\al; x)| &\leq \fr{x}{\dl_0} \lf( \lf| \sum_{m \cg 0 \mm{q}} \fr{h(m)}{m} \log \fr{x}{m} \rt| + \log x \lf| \sum_{m \cg 0 \mm{q}} \fr{h(m)}{m} \rt| \rt)  + O\lf( \log x \sum_{m \cg 0 \mm{q}}|h(m)| \rt) \\
&\leq \fr{x}{\dl_0 \vp(q)} \lf( 1 + O\biggl( \fr{\eta^{-1} \log x}{\log \fr{U_1}{U} \, \log qR }\biggr) \rt)
+ O\lf(  \fr{U_1 R \, \log x}{\vp(q) \, \log R \, \log \fr{U_1}{U}} \rt),
\end{split}
\end{equation}

Also, by Corollary \ref{hqnm} with $y =x$, $q_0 = q$ and $k = 1$, the second part $S_{\tI, 1}''$ is at most\footnote{The conditions $U_1 R \leq \fr{x}{8\dl_0 q}$ and $q \leq Q/R$ of Corollary \ref{hqnm} are both true from \eqref{CI}.}
\begin{equation}
\label{SI1pp}
S_{\tI,1}^{''}(\al; x) 
\ll \fr{q}{\vp(q)} \fr{U_1 R \, \log eq \, \log x}{\log R \, \log \fr{U_1}{U}} .
\end{equation}
%
Combining \eqref{SI1p} and \eqref{SI1pp} and noting that $U_1R \leq \fr{x^{1 - \eta/3}}{8 \dl_0^2 q}$ (follows from \eqref{CI}), we prove (a). 

\medskip

Now, we prove (b) and (c) simultaneously. Let 
$$f_0  = \Lb \ \, \text{or} \ \,  \mu. $$ 
We can write (as in the proof of \cite[Lemma 11.13]{HH})
\begin{equation}
\label{qxl}
\ell\al = \ell a/q + \ell \dl/x = a_1/q_\ell + \dl/x_\ell, \quad \text{where } \ q_\ell = q/(q,\ell), \, \ (a_1, q_\ell) = 1, \, \text{ and } \, x_\ell = x/\ell . 
\end{equation}
Also, we see that 
\begin{equation} 
\label{qxlc}
|\dl|/x_\ell \leq  1/q (Q/V) \quad \text{and} \quad q_\ell \leq q \leq Q/(VR) < Q/V,
\end{equation}
from \eqref{CI}. Therefore, $\ell \al$ satisfies \eqref{AP0} with $q_0 = q_\ell$, $y = x_\ell$ and $Q_0 = Q/V$. 

\smallskip

Now, we express:
\begin{equation}
\begin{split}
\label{SI2sp}
S_{\tI, 2, f_0}(\al; x) &= \sum\lm_{\ell \leq V} f_0(\ell) \sum_{m} h(m) \sum_{mn \leq x_\ell} e(mn(\ell \al)) = S_{\tI, 2, f_0}^{'}(\al; x) + S_{\tI, 2, f_0}^{''}(\al; x),
\end{split}
\end{equation}
where $S_{\tI, 2; f_0}^{'}$ is over those $m$'s such that $q_{\ell} \mid m$ and $S_{\tI, 2, f_0}^{''}$ runs over those $m$'s for which $q_{\ell} \nmid m$

Let us first consider $S_{\tI, 2, f_0}^{'}$. We see that $m \ell \leq U_1 V R \leq \fr{x}{8 \dl_0}$ (from \eqref{CI}), and hence $m \leq \fr{x_\ell}{8\dl_0}$. 
For a given $\ell$, we use Proposition \ref{hqm} to the inner sum with $k=0$, $f = h$, $q_0 = q_\ell$, $y = x_\ell$ and $\ell \al$ in place of $\al$, to obtain
\begin{equation}
\begin{split}
\label{SI21}
|S_{\tI, 2, f_0}^{'}(\al; x)| &\leq \sum_{\ell \leq V} |f_0(\ell)| \lf( \fr{x_\ell}{\dl_0} \lf| \sum_{m \cg 0 \mm{q_\ell}} \fr{h(m)}{m} \rt| \, + \, O\biggl(\log^2 x_\ell \sum_{m \cg 0 \mm{q_\ell}} |h(m)| \biggr) \rt)\\
&\leq \fr{x}{\dl_0} \sum_{\ell \leq V} \fr{|f_0(\ell)|}{\ell} \cdot \fr{2}{\vp(q_\ell) \log \fr{U_1}{U} \, \log \fr{U}{q_\ell R}} \, + \, O\lf( \fr{ U_1 R \, \log^2 x}{\vp(q) \, \log R \, \log\fr{U_1}{U}} \sum_{\ell \leq V} |f_0(\ell)| (\ell, q) \rt)\\
&\leq \fr{2x}{\dl_0 \vp(q) \, \log \fr{U_1}{U} \, \log \fr{U}{qR}} \, \sum_{\ell \leq V} \fr{|f_0(\ell)|}{\ell} (\ell, q) \, + \, O\lf( \fr{ U_1R \, \log^2 x}{\vp(q) \log R \, \log \fr{U_1}{U}} \sum_{\ell \leq V} |f_0(\ell)| (\ell, q) \rt)\\
\end{split}
\end{equation}
where we apply Lemma \ref{qhm} in the second line and use $\fr{1}{\vp(q_\ell)} \leq \fr{(\ell, q)}{\vp(q)}$ in the last line. We will later apply Lemma \ref{LbV} to the two inner sums in the last line of \eqref{SI21}, with $f_0 = \Lb \text{ or } \mu$. 

For $S_{\tI, 2, f_0}^{''}(\al; x)$, we have 
\begin{equation}
S_{\tI, 2, f_0}^{''}(\al; x) = \sum\lm_{\ell \leq V} f_0(\ell) \sum\lm_{\st{ m \\ q_{\ell} \nmid m }} h(m) \sum_{mn \leq x_\ell} e(mn(\ell \al)) .
\end{equation}
%
%
%
%
%
%
%
%
%
Now, we saw that by \eqref{qxl} and \eqref{qxlc}, $\ell \al$ satisfies \eqref{AP0} with $q_0 = q_\ell$, $y = x_\ell$ and $Q_0 = Q/V$, plus in addition we have $q_0 = q_\ell \leq Q/(VR) = Q_0/R$. Therefore, we can apply Corollary \ref{hqnm} to the inner sum over $m$ to obtain:
\begin{equation}
\begin{split}
\label{SI22}
|S_{\tI, 2, f_0}''(\al; x)| &\ll \fr{q}{\vp(q)} \fr{U_1 R \, \log^2 x}{\log R \, \log \fr{U_1}{U}} \sum_{\ell \leq V} |f_0(\ell)|
\ll \fr{q}{\vp(q)} \fr{U_1 V R \, \log^2 x}{\log R \, \log \fr{U_1}{U}} 
\end{split}
\end{equation}
%
%
since $\sum_{\ell \leq V} |f_0(\ell)| \ll V$, for $f_0 = \Lb \text{ or }\mu$.

Combining 
\eqref{SI2sp}, \eqref{SI21} and \eqref{SI22}, and finally using Lemma \ref{LbV} to the expression \eqref{SI21} in both the cases when $f_0 = \Lb \text{ or } \mu$, we find that
%
\begin{align*}
|S_{\tI, 2, \Lb}(\al; x)| &\leq \fr{2 \,x \, \log Vq}{\dl_0 \vp(q) \, \log \fr{U_1}{U} \, \log \fr{U}{qR}} \lf(1 + O\Bigl( \fr{1}{\log V} \Bigr)  \rt) \, + \, O\lf( \fr{q}{\vp(q)} \fr{U_1 V R \, \log^2 x}{\log R \, \log \fr{U_1}{U}} \rt), \\
|S_{\tI, 2, \mu}(\al; x)| &\leq \fr{2 \, x \, \tau(q) \, \log V}{\dl_0 \vp(q) \, \log \fr{U_1}{U} \, \log \fr{U}{qR}}  \lf(1 + O\Bigl( \fr{1}{\log V} \Bigr)  \rt)  \, + \, O\lf( \fr{q}{\vp(q)} \fr{U_1 V R \, \log^2 x}{\log R \, \log \fr{U_1}{U}} \rt) ,
\end{align*}
%
completing the proof of Theorem \ref{TI}. 

\bigskip

\section{Type-II sums}
\label{TIIS}
In this section, we bound the type-II sum arising from the identities \eqref{VI*} and \eqref{Vmu*}. 

For any arithmetic function $f$, let:
\begin{equation}
\label{SIIf}
S_{\tII, f}(\al; x) := \sum_{m > V} f(m) \sum_{n \leq x/m} (1*\tht)(n) \, (1 * \lb)(n) \, e(mn\al) .
\end{equation} 

Our main result is the following: 
\begin{thmI}
\label{TII}
Assume that the following conditions hold:
\begin{equation}
\label{CII}
\tag{$\mc{C}2$}
V \geq x^{\eta/3} \, \dl_0 q, \quad \ U \geq 9R^2\dl_0q, \quad \ UV < x/9.   
\end{equation}

Then for $x \geq x_0$ sufficiently large, we have:
\begin{equation*}
\begin{split}
\label{SLb}
\lf| S_{\tII, \Lb}(\al; x) \rt| 
\leq \fr{q}{\vp(q)} \fr{3.6 \, x \, \log x }{ \sr{\dl_0 q \, \log R \, \log \fr{U_1}{U}}} \int_{\fr{\log V}{\log x}}^{\fr{\log x/U}{\log x}} \sr{\fr{t}{t - \fr{\log \dl_0 q}{\log x}}} \, dt ,
\end{split}
\end{equation*}
and
\begin{equation*}
\label{Smu}
\lf| S_{\tII, \mu}(\al; x)\rt| 
\leq \fr{2 \, x}{\sr{\dl_0 \vp(q)}}  \fr{\log \fr{x}{UV}}{\sr{\log R \, \log \fr{U_1}{U}}}   .
\end{equation*}
\end{thmI}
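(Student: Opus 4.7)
The plan is to follow the strategy outlined in Section~\ref{OTP}, reducing $S_{\tII, f}(\al; x)$ to bilinear forms $S(\al, \mc{M}, \mc{N}) = \sum_{m \in \mc{M}} f(m) \sum_{n \in \mc{N}} (1*\tht)(n)(1*\lb)(n) e(mn\al)$ via dyadic decomposition, with $\mc{M} \subseteq (M, 2M]$, $\mc{N} \subseteq (N, 2N]$, $MN \asymp x$, $M \geq V$, and $N > U$ (using that $(1*\tht)(n) = 0$ for $n \leq U$). For $f = \Lb$, I first reduce to a sum over primes at the standard cost and apply Cauchy--Schwarz to extract the $m$-variable, picking up a factor $(1+\eps) M \log M$; for $f = \mu$, Cauchy--Schwarz gives just $M$.

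The heart of the argument is the treatment of the inner square. I expand $(1*\lb)(n)$ via Lemma~\ref{lbcr} as $G_q(R)^{-1} \sum_{r \leq R,\,(r,q)=1} \mu(r)\vp(r)^{-1} c_r(n)$ and open the Ramanujan sums as $c_r(n) = \sd{}{^*}\sum_{a'\md{r}} e(na'/r)$. A second Cauchy--Schwarz in the $r$-variable, weighted by $\mu^2(r)/\vp(r)$, exploits the definition of $G_q(R)$ to collapse one factor of $G_q(R)$, and a further trivial Cauchy--Schwarz on the $a'$-sum reduces the task to bounding
\begin{equation*}
\sum_{m \in \mc{M}} \sum_{\st{r \leq R \\ (r,q)=1}} \sd{}{^*}\sum_{a' \md{r}} \Bigl| \sum_{n \in \mc{N}} (1*\tht)(n) \, e\bigl( n(m\al + a'/r) \bigr) \Bigr|^2,
\end{equation*}
with a single surviving factor $G_q(R)^{-1}$. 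This is precisely where the $(1*\lb)$ weight mimics an averaged Montgomery-type inequality.

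Next, I partition $\mc{M}$ via Lemma~\ref{BM} into $\ll \fr{q}{\vp(q)}\fr{M}{\dl_0 q \, \log(M/(\dl_0 q))}$ subsets on each of which $\{m\al\}$ are $\gg \dl_0 q/x$-spaced. Since the Farey fractions $a'/r$ with $r \leq R$ are $\gg 1/R^2$-spaced, and \eqref{CII} forces $N \gg R^2$ (via $U \geq 9 R^2 \dl_0 q$), the combined frequencies $m\al + a'/r$ are well-spaced within each subset. The large-sieve inequality, applied subset by subset, bounds the triple sum above by $(\text{number of subsets}) \cdot N \, \|1*\tht\|_{L^2(\mc{N})}^2$. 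Graham's asymptotic~\cite{Gr2} supplies $\|1*\tht\|_{L^2(\mc{N})}^2 \ll N/\log(U_1/U)$, while Ramar\'{e}'s estimate $G_q(R) \geq \vp(q)/(q\log R)$ (from~\cite{LR}) handles the surviving $G_q(R)^{-1}$. Assembling these ingredients yields, per dyadic block,
\begin{equation*}
|S(\al, \mc{M}, \mc{N})| \leq \fr{q}{\vp(q)} \fr{C \, x}{\sr{\dl_0 q}} \sr{\fr{\log M}{\log R \, \log(U_1/U) \, \log(M/(\dl_0 q))}}
\end{equation*}
in the $\Lb$ case, and the analogous bound with $\sr{\log M}$ absent for $\mu$.

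Finally, I sum over the $O(\log x)$ dyadic blocks $V \leq M \leq x/U$. For $\mu$, the block-wise bound (without the $\sr{\log M}$) sums telescopically to the desired $\log(x/(UV))$ factor. For $\Lb$, writing $M = x^t$ with $t \in [\log V/\log x,\, \log(x/U)/\log x]$ recasts the dyadic sum as a Riemann approximation to $\int \sr{t/(t-u)}\,dt$ with $u = \log(\dl_0 q)/\log x$, reproducing the claimed integral and, with careful constant-tracking, the $3.6$. The main obstacle is the large-sieve step: verifying that after the Lemma~\ref{BM} partition the frequencies achieve spacing $\gg \min(\dl_0 q/x, 1/R^2)$ without incurring logarithmic losses, since any weaker spacing would destroy the crucial $\sr{\dl_0 q}$ in the denominator. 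The conditions in \eqref{CII} --- in particular $U \geq 9R^2\dl_0 q$ and $V \geq x^\rho \dl_0 q$ --- are calibrated precisely so that all these savings survive cleanly.
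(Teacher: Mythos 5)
Your overall architecture matches the paper's own proof of Theorem \ref{TII}: dyadic reduction, a first Cauchy--Schwarz in $m$, Lemma \ref{lbcr} to open $(1*\lb)(n)$ into Ramanujan sums, a second Cauchy--Schwarz over $(r, a')$ to collapse one $G_q(R)$, the Lemma \ref{BM} partition, the large sieve, Graham's $L^2$-estimate for $1*\tht$, and finally $G_q(R) \geq \fr{\vp(q)}{q}\log R$. Your two-step Cauchy--Schwarz (first in $r$ with weight $\mu^2(r)/\vp(r)$, then trivially in $a'$) is numerically equivalent to the paper's single Cauchy--Schwarz over $(r,a')$ with weight $\mu^2(r)/\vp(r)^2$, so no issue there.

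There is a genuine gap, though, at the large-sieve step. You claim the partition renders the frequencies $m\al + a'/r$ spaced by $\gg \min(\dl_0 q/x, 1/R^2)$, and then assert the large sieve yields a factor $N$. These two statements are incompatible: spacing $\dl_0 q/x$ gives $\dl^{-1} = x/(\dl_0 q)$, and since $M > V > \dl_0 q$ you have $N \leq x/M < x/(\dl_0 q)$, so $\dl^{-1}$ \emph{dominates} $N$; the large sieve then returns $\approx x/(\dl_0 q)$ rather than $N$, and the final bound loses a factor $\sr{M/(\dl_0 q)}$, which is fatal. The partition in Lemma \ref{BM} is specifically designed to give separation $|m - m'| \geq M/\dl_0$ for congruent $m \neq m'$, which produces the much stronger spacing $\gg 4M/x$ once you track the contribution $(m-m')\dl/x$ (and which forces the hypothesis $U \geq 9 R^2 \dl_0 q$ so that $M \leq x/U \leq x/(8\dl_0 q R^2)$ controls the Farey part $\fr{1}{qrr'} - M|\dl|/x$). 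Even with spacing $\gg M/x$, one must still handle the case $|\mc{N}| < x/(4M)$, which the paper does by the elementary inequality $\sr{1+z} \leq 1 + \tfrac12 z^{2/3}$ and the convergence of $\sum_\mc{N} |\mc{N}|^{1/3}$; your blanket assertion that the large sieve gives $N$ skips this.

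A smaller imprecision: for $\mu$ you say the block-wise bound is ``the analogous bound with $\sr{\log M}$ absent,'' leaving $\log(M/\dl_0 q)$ in the denominator. In fact both logarithms drop: Lemma \ref{BM}(b), for general integers (not primes), produces $\ceil{M/(\dl_0 q)}$ subsets with \emph{no} logarithmic saving, exactly matching $\sum_m \mu^2(m) \ll M$, which has no $\log M$. The per-block bound is then constant in $M$, and summing $O(\log(x/UV))$ dyadic blocks gives the stated result; your description would have the dyadic sum produce a different integral.
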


\medskip

\subsection{Some preliminary lemmas}
The key fact, on which our argument rests upon, is this interesting connection between the Selberg sieve weights $\lb(d)$ and the Ramanujan sums $c_r(n)$. It was first observed by Kobayashi \cite{Kob} and also by Huxley \cite{Hux}. 
\begin{lem}
\label{lbcr}
Let $\{\lb(d)\}$ be the Selberg type weights in \eqref{lbR}, supported on $d \leq R$, $(d,q) = 1$. Then, for all $n \geq 1$
$$
G_q(R) \sum\lm_{d \mid n} \lb(d) = \sum\lm_{\st{r \leq R \\ (r,q) = 1 }} \fr{\mu(r) c_r(n)}{\vp(r)},
$$
where $c_r(n)$ denotes the Ramanujan sum.
\end{lem}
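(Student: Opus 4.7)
The plan is to start from the right-hand side and reduce it to the definition of $\lb(d)$, since the Ramanujan sum has a clean Dirichlet-convolution expansion while $\lb(d)$ carries a nested sum $G_{qd}(R/d)$ that we will eventually recover. Concretely, I would use the standard identity $c_r(n) = \sum_{d \mid (r, n)} d \, \mu(r/d)$ to write
\begin{equation*}
\sum_{\st{r \leq R \\ (r,q)=1}} \fr{\mu(r) \, c_r(n)}{\vp(r)} \, = \, \sum_{d \mid n} d \sum_{\st{r \leq R \\ d \mid r \\ (r, q) = 1}} \fr{\mu(r) \, \mu(r/d)}{\vp(r)},
\end{equation*}
and then change variables $r = de$ in the inner sum.

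The next step is to simplify the inner sum. The factor $\mu(r)$ forces $r$ squarefree, which in the variables $(d, e)$ means both $d$ and $e$ must be squarefree and $(d, e) = 1$; moreover $(r, q) = 1$ breaks as $(d, q) = (e, q) = 1$. Under these conditions $\mu(r) = \mu(d) \mu(e)$, $\mu(r/d) = \mu(e)$, and $\vp(r) = \vp(d) \vp(e)$. Substituting and pulling $d$ out yields
\begin{equation*}
\sum_{\st{d \mid n \\ (d, q) = 1}} \fr{d \, \mu(d)}{\vp(d)} \sum_{\st{e \leq R/d \\ (e, qd) = 1}} \fr{\mu^2(e)}{\vp(e)} \, = \, \sum_{\st{d \mid n, \, d \leq R \\ (d, q) = 1}} \fr{d \, \mu(d)}{\vp(d)} \, G_{qd}(R/d),
\end{equation*}
where the constraint $d \leq R$ is automatic since $G_{qd}(R/d) = 0$ otherwise (the $\mu(d)$ factor already restricts $d$ to squarefree values).

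Comparing with the definition \eqref{lbR} of $\lb(d)$, the last expression is exactly $G_q(R) \sum_{d \mid n} \lb(d)$, which is the left-hand side, completing the proof. There is no real obstacle here beyond careful bookkeeping: the substitution $r = de$ plus the multiplicativity of $\mu$ and $\vp$ across coprime factors does all the work, and the asymmetric-looking $G_{qd}(R/d)$ appearing in the definition of $\lb$ is precisely what the inner $e$-sum produces. If one preferred a ``forward'' proof, one could instead start from $G_q(R)\sum_{d \mid n} \lb(d)$, expand $G_{qd}(R/d)$, and set $r = ds$ to recognize the Ramanujan sum, but the backward direction is slightly cleaner because $c_r(n)$'s formula hands us the substitution directly.
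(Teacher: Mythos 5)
Your proof is correct and is essentially the same computation the paper performs, just run in the reverse direction: you expand the Ramanujan sum $c_r(n) = \sum_{d\mid(r,n)} d\,\mu(r/d)$, swap the sums, factor along the coprime decomposition $r = de$, and recognize $G_{qd}(R/d)$, whereas the paper expands $G_{qd}(R/d)$ inside $\lb(d)$, swaps, and recognizes $c_r(n)$. The substitution, the multiplicativity bookkeeping for $\mu$ and $\vp$, and the squarefree/coprimality reductions are identical in both.
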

\begin{proof}
We have
\begin{equation*}
\begin{split}
G_q(R) \sum_{d \mid n} \lb(d) &= \sum\lm_{\st{d \mid n \\ (d,q) = 1}} d \mu(d) 
\sum\lm_{\st{r \leq R \\ r \cg 0 \mm{d} \\(r,q) = 1}} \fr{\mu^2(r)}{\vp(r)} 
 = \sum\lm_{\st{r \leq R \\(r,q) = 1}} \fr{\mu(r)}{\vp(r)} \sum\lm_{\st{d \mid n \\ d \mid r}} d \mu(r/d) 
 = \sum\lm_{\st{r \leq R \\(r,q) = 1}} \fr{\mu(r) c_r(n)} {\vp(r)} .
\end{split}
\end{equation*}
\end{proof}
The next ingredient is this simple combinatorial lemma from the author's thesis \cite{PS}. It is based upon the Brun-Titchmarsh theorem, and will play a key role in saving a logarithm. We include the proof for the sake of completeness.
\begin{lem}[\text{\cite[Lemma 3.13]{PS}}]
\label{BM}
Suppose $M  \geq 2$. We then have the following:
\begin{itemize}
\item[(a)] Let $3 \leq L \leq M/q$, and set 
$$\mf{B} = \mf{B}(L, q) = \fr{q}{\vp(q)}\fr{2 L}{\log L} .$$
Then the primes in $(M, 2M]$ can be partitioned into at most 
$\lceil \mf{B} \rceil$ subsets $\ms{M}_1, \ldots, \ms{M}_{\ceil{\mf{B}}}$, such that distinct primes congruent modulo $q$ in any $\ms{M}_j$ are separated by at least $Lq$, i.e.,
$$p \equiv p' \md{q} \, \text{ with } \, p \neq p' \, \text{ in } \, \ms{M}_j \ \Rightarrow \ \ |p-p'| \geq Lq .$$ 

\vskip 0.05in

\item[(b)]
For  $2 \leq L \leq M/q$, the integers in $(M, 2M]$ can be partitioned into at most $\ceil{L}$ subsets $\ms{M}_1, \ldots, \ms{M}_{\ceil{L}}$, such that in any $\ms{M}_j$, $m \cg m' \md{q}$ with $m \neq m'$ implies $|m - m'| \geq Lq$. 
\end{itemize}
\end{lem}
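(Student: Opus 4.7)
The proof splits into two essentially independent parts corresponding to (a) and (b); in each case I would treat one reduced residue class modulo $q$ at a time and then glue, since primes (respectively integers) in distinct residue classes mod $q$ place no mutual constraint under the required separation condition.

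For part (a), the plan is a graph-coloring argument driven by Brun--Titchmarsh. Fix $a$ with $(a,q)=1$ and let $G_a$ denote the graph whose vertices are the primes $p \cg a \mm{q}$ in $(M, 2M]$, with an edge between $p \neq p'$ whenever $|p - p'| < Lq$. Associating each such prime $p$ with the half-open interval $[p, p+Lq)$, adjacency in $G_a$ is precisely interval overlap, so $G_a$ is an interval graph and hence perfect; in particular $\chi(G_a) = \omega(G_a)$. A clique in $G_a$ is a set of primes $\cg a \mm{q}$ pairwise within distance less than $Lq$, hence all fitting in a single interval of length at most $Lq$. Since $L > 3$ and $Lq \leq M$, the Brun--Titchmarsh inequality bounds the number of primes $\cg a \mm{q}$ in any such interval by $\fr{2Lq}{\vp(q)\log L} = \mc{B}$, so $\chi(G_a) \leq \ceil{\mc{B}}$. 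Coloring each $G_a$ with the common palette $\{1, 2, \ldots, \ceil{\mc{B}}\}$ and taking the union of color classes across all reduced residues $a \mm{q}$ produces the desired partition $\ms{M}_1, \ldots, \ms{M}_{\ceil{\mc{B}}}$.

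For part (b), the construction is explicit. Within each residue class $a \mm{q}$ the integers in $(M, 2M]$ congruent to $a$ form an arithmetic progression $\{a + kq : k_1 \leq k \leq k_2\}$ of common difference $q$. I would send $a + kq$ to the subset indexed by $k \bmod \ceil{L}$. Any two distinct integers in the same subset and the same residue class then differ by a nonzero integer multiple of $\ceil{L}q \geq Lq$, exactly as required. Since distinct residue classes do not interact, this partitions $(M, 2M] \cap \mb{Z}$ into at most $\ceil{L}$ valid subsets.

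The one delicate point is invoking Brun--Titchmarsh with the correct shape and range, namely for intervals of length $Lq$ with $Lq$ exceeding $q$. The hypothesis $3 < L \leq M/q$ guarantees both $\log L > 0$ (so $\mc{B}$ is well-defined and finite) and $Lq \leq M$, so the interval form of Brun--Titchmarsh applies cleanly; beyond this, the rest of the argument is routine.
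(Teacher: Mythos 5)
Your proof is correct, and for part (a) it takes a genuinely different route from the paper.

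For (a), the paper's argument is a direct construction: within each reduced residue class $a \mm{q}$ it enumerates the primes in increasing order as $p_1(a), p_2(a), \ldots$ and assigns $p_j(a)$ to $\ms{M}_{j_0}$ with $j \cg j_0 \mm{\ceil{\mc{B}}}$; the required separation is then \emph{verified a posteriori} via Brun--Titchmarsh, observing that if $p, p'$ land in the same block with $|p - p'| = Hq$, then there are at least $\ceil{\mc{B}}$ primes of the class in $(p, p']$, which by Brun--Titchmarsh forces $H \geq L$ (using that $H/\log H$ is increasing). Your approach instead routes through interval-graph perfection: you bound the clique number of $G_a$ by Brun--Titchmarsh first, then conclude $\chi(G_a) = \omega(G_a) \leq \ceil{\mc{B}}$ and read off the partition from a proper coloring. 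Both deductions hinge on the same Brun--Titchmarsh estimate (count of primes of a fixed class in an interval of length $Lq$); yours is conceptually cleaner in that the color classes \emph{are} the desired sets by definition, at the cost of invoking perfection of interval graphs (or, equivalently, the fact that a left-to-right greedy coloring of intervals uses $\omega$ colors). The paper's version is more elementary and self-contained but requires the small extra step of inverting the Brun--Titchmarsh bound via monotonicity of $H \mapsto H/\log H$. For (b), your construction (assigning $a + kq$ to class $k \bmod \ceil{L}$) is essentially identical to the paper's, just parameterized by the quotient $k$ rather than by the rank of the element within its residue class; the two differ only by a shift, so this is the same argument.

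One minor point worth flagging: when you say a clique of $G_a$ fits in "a single interval of length at most $Lq$," note that for the Brun--Titchmarsh bound you should enlarge (if necessary) to an interval of length \emph{exactly} $Lq$ to get the clean form $\fr{2Lq}{\vp(q)\log L} = \mc{B}$; this is immediate but is the step where the hypothesis $L > 3$ (giving $L > e$, so in particular $\log L > 1$) guarantees the bound is meaningful and consistent with the paper's use of monotonicity of $H/\log H$.
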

\begin{proof}    
Let us prove (a). Let $\ms{M}_j$'s be empty sets to begin with. We do the following:
\begin{itemize}
\item Let $(a, q) = 1$ and enumerate the primes in $(M, 2M]$, congruent to $a \md{q}$ in the increasing order as 
$\{ p_1(a), p_2(a), \ldots , p_K(a) \}$. Now, suppose $j \cg j_0 \md{ \lceil \mf{B} \rceil}$, where $j_0 \in \{1, 2, \ldots, \lceil \mf{B} \rceil\}$ and place $p_j(a)$ in $\ms{M}_{j_0}$. So, we place $p_1(a)$ in $\ms{M}_1$, $p_2(a)$ in $\ms{M}_2$, etc, and then place $p_{\lceil B \rceil + 1}$ again in $\ms{M}_1$ and continue this cyclically.
\vskip 0.025in
\item Repeat the above for all coprime residue classes $a \md{q}$.
\end{itemize}

$\textbf{Claim}$: If $p$ and $p'$ are distinct primes in $\ms{M}_j$ with $p \equiv p' \md{q}$, then 
$|p - p'| \geq Lq$. 

To prove this, let $p \neq p' \in \ms{M}_j$ satisfy $p \equiv p' \equiv a \md{q}$. It follows by construction that 
$p = p_i(a)$ and $p' = p_{i'}(a)$, with $i \equiv i' \md{\lceil \mf{B} \rceil}$. Since $i \neq i'$, one has 
$|i - i'| \geq \ceil{\mf{B}}$ and hence there are at least $\lceil \mf{B} \rceil$ primes $\equiv a \md{q}$ in the interval $J = (p, p']$. Let $|p - p'| = Hq$. By the Brun-Titchmarsh theorem, there are at most 
$\fr{2Hq}{\vp(q) \log H}$ primes in $J$, congruent to $a \md{q}$. Therefore
$$
\fr{2Hq}{\varphi(q) \log H} \geq \mf{B} = \frac{2 L q}{\vp(q) \, \log L},
$$
from which it follows that $H \geq L$ (as $x/\log x$ is increasing for $x \geq e$) and so $|p-p'| = Hq \geq Lq$. 
This proves the claim and (a).

For (b), we follow the exact same procedure as we followed for primes, i.e., by letting $\{\ms{M}_j: 1 \leq j \leq \ceil{L}\}$ be empty sets to begin with and given any $a \md{q}$ (not necessarily coprime to $q$), we label the integers $\cg a \md{q}$ in $(M, 2M]$ in the increasing order as $\{m_1(a), \ldots, m_K(a)\}$ and place $m_j(a)$ in $\ms{M}_{j_0}$, where $j \cg j_0 \md{\ceil{L}}$, $j_0 \in \{1, \ldots, \ceil{L}\}$. Then just like before, if $m \cg m' \cg a \md{q}$, it would follow by construction that $m = m_i(a)$, $m' = m_{i'}(a)$ with $|i - i'| \geq \ceil{L}$, implying that there are at least $\ceil{L}$ integers $\cg a \md{q}$ in $(m, m']$. And if $|m-m'|  = Hq$, one has exactly $H$ integers $\cg a \md{q}$ in $(m, m']$ and it follows that $H \geq \ceil{L} \geq L$, completing the proof. 
\end{proof}

\begin{lem}
\label{Lbmu}
For any $\eps>0$ and $M$ sufficiently large, we have
$$
\sum_{m \sim M} \Lb^2(m) \leq (1 + \eps) \, M \log M \quad \text{ and } \quad \sum_{m \sim M} \mu^2(m) \leq (6/\pi^2 + \eps) \, M .
$$
\end{lem}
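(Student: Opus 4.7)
The plan is to handle the two inequalities independently by reducing each to a standard asymptotic.

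For the first bound, I would note that $\Lambda^2$ is supported on prime powers, so
$\sum_{m \sim M} \Lambda^2(m) = \sum_{M < p \leq 2M} (\log p)^2 \, + \, \sum_{k \geq 2} \sum_{M < p^k \leq 2M} (\log p)^2.$
The prime-power contribution for $k \geq 2$ is bounded by $\ll \sqrt{M} \log^2 M$ (since there are $O(\sqrt{M})$ such $p^k$ in $(M, 2M]$), which is $o(M)$. For the prime contribution, I would apply Abel summation with the counting function $\theta(y) - \theta(M) = y - M + o(y)$ (by the prime number theorem) to write
\[
\sum_{M < p \leq 2M} (\log p)^2 = \log(2M) \bigl( \theta(2M) - \theta(M) \bigr) - \int_M^{2M} \frac{\theta(y) - \theta(M)}{y} \, dy.
\]
The first term is $M \log 2M + o(M \log M)$ and the integral evaluates to $(1 - \log 2) M + o(M)$, so the total comes out to $M \log M + o(M \log M)$, which is $\leq (1 + \eps) M \log M$ for $M$ sufficiently large.

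For the second bound, I would simply invoke the standard asymptotic for the count of squarefree integers, namely $\sum_{m \leq x} \mu^2(m) = \tfrac{6}{\pi^2} x + O(\sqrt{x})$ (which follows from $\mu^2(m) = \sum_{d^2 \mid m} \mu(d)$ and a hyperbola split). Subtracting the estimates at $2M$ and $M$ yields
\[
\sum_{m \sim M} \mu^2(m) = \frac{6}{\pi^2} M + O(\sqrt{M}) \leq \bigl( 6/\pi^2 + \eps \bigr) M,
\]
for $M$ sufficiently large.

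Neither step presents any real obstacle; the only mild care needed is in the first bound, where one must confirm that the prefactor is genuinely $\log M$ rather than $\log 2M$. That is taken care of by the $-\int \theta(y)/y \, dy$ term in Abel summation, which cancels the excess $\log 2$ precisely.
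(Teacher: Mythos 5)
The paper states this lemma without proof, as both estimates are standard, so there is no in-paper argument to compare against; your proof is correct. Both reductions are the natural ones: the prime-power tail is negligible, Abel summation plus the prime number theorem handles $\sum_{M<p\leq 2M}(\log p)^2$, and the elementary asymptotic for squarefree numbers handles the $\mu^2$-sum.

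Two small remarks. First, your closing sentence claims the $-\int \theta(y)/y\,dy$ term ``cancels the excess $\log 2$ precisely''; it does not. You have $M\log 2M - (1-\log 2)M + o(M\log M) = M\log M + (2\log 2 - 1)M + o(M\log M)$, so a residual $(2\log 2 - 1)M \approx 0.386\,M$ survives. This is harmless, since any $O(M)$ term is $o(M\log M)$ and so is absorbed by the $(1+\eps)$ factor, but the claim of exact cancellation is inaccurate. Second, and for the same reason, the Abel-summation step is more than is needed: the crude bound
$\sum_{M<p\leq 2M}(\log p)^2 \leq \log(2M)\,\bigl(\theta(2M)-\theta(M)\bigr) = (1+o(1))\,M\log 2M = (1+o(1))\,M\log M$
already gives $(1+\eps)M\log M$, since $\log 2M/\log M \to 1$. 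Your route is correct, just slightly longer than necessary.
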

\smallskip

We recall the large sieve inequality:
\begin{lem}[Large sieve inequality]
\label{LSI}
Let $\{\al_r\}_{r \in \mc{R}}$ be a set of $\dl$-spaced points in $\mb{R}/\mb{Z}$, i.e., $\|\al_i - \al_j\| \geq \dl$ for all $i \neq j$. Let $\mc{N}$ be a interval of length at most $N$. Then
$$
\sum_{r \in \mc{R}} \lf| \sum_{n \in \mc{N}}a_n e(n\al_r) \rt|^2 \leq \bigl( N + \dl^{-1} \bigr) \sum_{n \in \mc{N}} |a_n|^2 .
$$
\end{lem}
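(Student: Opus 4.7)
The plan is to dualize the inequality and then apply a Selberg-type majorant argument. First, by duality, the inequality to be established is equivalent to its dual form: for every sequence $\{c_r\}_{r \in \mc{R}} \subset \mb{C}$, one has
\[
\sum_{n \in \mc{N}} \biggl| \sum_{r \in \mc{R}} c_r \, e(n \al_r) \biggr|^2 \leq (N + \dl^{-1}) \sum_{r \in \mc{R}} |c_r|^2 .
\]
So I would first verify that this duality step costs nothing (it follows from the general fact that a bilinear form and its adjoint have the same operator norm between $\ell^2$-spaces), and then work exclusively with the dual form.

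Next, the idea is to replace the indicator $\bm{1}_{\mc{N}}$ by a smooth majorant whose Fourier transform has compact support of width at most $\dl$. Concretely, I would invoke the existence of a Selberg (or Beurling--Selberg) majorant $F: \mb{R} \to \mb{R}$ of $\bm{1}_{\mc{N}}$, satisfying $F(x) \geq \bm{1}_{\mc{N}}(x)$ for all $x$, $\widehat{F}$ supported in $[-\dl, \dl]$, and $\widehat{F}(0) = \int F = N + \dl^{-1}$. Granting this (which is a standard construction due to Selberg via the extremal entire functions of exponential type), I would then estimate
\[
\sum_{n \in \mc{N}} \biggl| \sum_r c_r e(n \al_r) \biggr|^2 \leq \sum_{n \in \mb{Z}} F(n) \biggl| \sum_r c_r e(n \al_r) \biggr|^2 = \sum_{r, s} c_r \overline{c_s} \sum_{n \in \mb{Z}} F(n) \, e(n (\al_r - \al_s)) .
\]
By Poisson summation, the inner sum equals $\sum_{k \in \mb{Z}} \widehat{F}(\al_r - \al_s + k)$.

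The last step is to exploit the $\dl$-spacing. Since $\widehat{F}$ is supported in $[-\dl, \dl]$, for any fixed $r \neq s$ the differences $\al_r - \al_s + k$ can lie in $[-\dl, \dl]$ for at most one $k$, and by the well-spacing hypothesis $\|\al_r - \al_s\| \geq \dl$, in fact $\widehat{F}(\al_r - \al_s + k) = 0$ unless $r = s$ (with room at the endpoints handled by taking the Selberg majorant in a slightly symmetric, strictly-interior-support form, or by a small limiting argument). The diagonal term $r = s$ contributes $\widehat{F}(0) = N + \dl^{-1}$ per index, giving the claimed bound.

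The main technical obstacle is producing the Selberg majorant with the sharp constants; this is classical but nontrivial. I would either cite it from the standard references (e.g.\ Montgomery's \emph{Ten Lectures} or Iwaniec--Kowalski \cite{IK}) or, if a self-contained proof is desired, replace it by Gallagher's simpler argument, which loses only a constant factor, and absorb that into the final bound. Modulo this one ingredient, the remaining steps are routine.
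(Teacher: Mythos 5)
The paper itself does not prove this lemma; it only cites \cite[Theorem~7.7]{IK} and \cite[Theorem~1]{MV73}. Your sketch is the classical Beurling--Selberg majorant proof (after dualizing), which is one of the standard routes to the sharp constant $N + \dl^{-1}$, and it is correct as far as it goes. Two small remarks. First, the worry about the endpoints $\pm\dl$ of $\text{supp}(\widehat{F})$ is automatically resolved: the majorant $F$ is nonnegative with $\int F < \infty$, hence $F \in L^1$, hence $\widehat{F}$ is continuous; continuity together with $\widehat{F}(t)=0$ for $|t|>\dl$ forces $\widehat{F}(\pm\dl)=0$, so the off-diagonal sum $\sum_k \widehat{F}(\al_r - \al_s + k)$ vanishes outright for $r \neq s$ and no limiting argument or shrinking of the support is needed. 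Second, the suggested fallback to Gallagher's argument would \emph{not} be acceptable in context: the paper tracks the leading constant explicitly through Proposition \ref{fII} (the quantity $c_0 = 1 + \tfrac{1}{5(2^{1/3}-1)}$) and ultimately into $\ms{F}_{\eta}$ and $\ms{G}_{\eta}$, so the sharp bound $N + \dl^{-1}$ is genuinely used; a Gallagher-type $O(N + \dl^{-1})$ would propagate a larger constant through the whole argument. Sticking with the Selberg majorant (or simply citing Montgomery--Vaughan, as the paper does) is the right call.
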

\begin{proof}
See \cite[Theorem 7.7]{IK} or \cite[Theorem 1]{MV73}. 
\end{proof}

\smallskip

Now, we arrive at the key result of this section:
\begin{prop}
\label{fII}
Let $f: \mb{N} \to \mb{C}$ be an arithmetic function. Suppose that $1 < R^2 \leq \fr{U}{8\dl_0 q}$ and that $\fr{M}{\dl_0 q} \geq \og(x)$, where $\lim_{x \to \infty} \og(x) = \infty$. Let $\mc{M} \subseteq (M, 2M]$ be a subset of integers and assume that the support of $f$ in $\mc{M}$ can be partitioned into at most $\mf{B}_f(M)$ subsets, such that in any subset, distinct elements congruent modulo $q$ are separated by at least $M/\dl_0$, i.e.,
\begin{equation} 
\label{H0}
\tag{$\mc{H}_0$}
\begin{split}
&\text{supp} (f) \cap \mc{M} = \bigsqcup_{j=1}^{\mf{B}_f(M)} \ms{M}_j, \  \text{ such that } \ m \cg m ' \md{q}, \ \, m \neq m' \text{ in } \ms{M}_j \ \Rightarrow |m - m'| \geq M/\dl_0. 
\end{split}
\end{equation}
Then, for $x \geq x_0$:
\begin{equation*}
 \lf| \sum_{m \in \mc{M}} f(m)  \sum_{n \leq x/m} (1 * \tht)(n) \, (1 * \lb)(n) \, e(mn\al) \rt| \leq (1 + \eps) \, \fr{c_0 \, x}{M}  \sr{\fr{q}{\vp(q)} \fr{\mf{B}_f(M)}{\log R \, \log \fr{U_1}{U}} \sum_{m \in \mc{M}} |f(m)|^2 },
\end{equation*} 
where $c_0 = 1 + \fr{2^{-7/3}}{(2^{1/3} - 1)} = 1.763 \ldots$. 
\end{prop}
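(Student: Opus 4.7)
The plan is to follow the four-stage strategy outlined in Section \ref{OTP}, combining two applications of Cauchy--Schwarz with the Selberg--Ramanujan identity (Lemma \ref{lbcr}) and the large sieve (Lemma \ref{LSI}).

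First I would apply Cauchy--Schwarz to the outer $m$-sum to isolate $|f(m)|^2$:
\begin{equation*}
\lf| \sum_{m \in \mc{M}} f(m) \, T_m \rt|^2 \leq \lf( \sum_{m \in \mc{M}} |f(m)|^2 \rt) \sum_{m \in \mc{M}} |T_m|^2,
\end{equation*}
where $T_m := \sum_{n \leq x/m}(1*\tht)(n)(1*\lb)(n)\, e(mn\al)$. Using Lemma \ref{lbcr}, I would replace $(1*\lb)(n)$ by $G_q(R)^{-1}\sum_{(r,q)=1,\,r\leq R}\fr{\mu(r)}{\vp(r)}c_r(n)$ and open the Ramanujan sum as $c_r(n)=\sd{}{^*}\sum_{a' \mm{r}}e(na'/r)$. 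A second Cauchy--Schwarz on the pair $(r,a')$, exploiting the identity $\sum_{r \leq R,\,(r,q)=1}\mu^2(r)/\vp(r)=G_q(R)$, cancels one factor of $G_q(R)$ and gives
\begin{equation*}
G_q(R) \sum_{m \in \mc{M}} |T_m|^2 \leq \sum_{m \in \mc{M}} \sum_{\st{r \leq R \\ (r,q)=1}} \sd{}{^*}\sum_{a' \mm{r}} \lf| \sum_{n \leq x/m}(1*\tht)(n)\, e\bigl(n(m\al + a'/r)\bigr) \rt|^2.
\end{equation*}

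Next, I would invoke hypothesis \eqref{H0} to partition $\mc{M}$ into $\mc{B}_f(M)$ blocks $\ms{M}_j$ and verify, for each block, that the frequency set $\{m\al + a'/r : m \in \ms{M}_j,\ r \leq R,\ (r,q)=1,\ (a',r)=1\}$ is $\dl$-separated in $\mb{R}/\mb{Z}$ with $\dl^{-1} \ll qR^2$. Writing $\al = a/q + \dl/x$, the pairwise distance splits into a rational piece of denominator dividing $qr_1r_2 \leq qR^2$ and a perturbation $|(m_1-m_2)\dl/x| \leq 2M/(qQ)$; the condition $R^2 \leq U/(8\dl_0 q)$, combined with $Q \geq x^{2/3}$, keeps the perturbation well below the Farey gap. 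The rational part can vanish modulo $1$ only if $r_1 = r_2$, $a_1' = a_2'$, and $q \mid (m_1 - m_2)$, in which case \eqref{H0} forces $|m_1 - m_2| \geq M/\dl_0$; together with the fact that $\ms{M}_j \subseteq (M,2M]$ has diameter at most $M$, this caps the number of collisions per frequency by $O(\dl_0)$, which can be absorbed into the spacing constant.

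With the spacing in hand, I would dyadically decompose the $n$-range into $\mc{N}_k = (N_k, 2N_k]$ with $U < N_k \leq x/M$, apply the large sieve blockwise to bound the $(r,a')$-sum by $(2N_k + \dl^{-1})\sum_{n \in \mc{N}_k}(1*\tht)(n)^2$, and dispose of the inner $L^2$-norm by Graham's asymptotic $\sum_{n \leq N}(1*\tht)(n)^2 = (1+o(1))N/\log(U_1/U)$. Since $\dl^{-1} \ll qR^2 \leq U/(8\dl_0) \leq N_k$, the bracket collapses to essentially $O(N_k)$; summing over the $\mc{B}_f(M)$ blocks, using $G_q(R) \geq (\vp(q)/q)\log R$, and taking square roots yields $|S_k| \ll \fr{N_k}{\sr{\log R\,\log(U_1/U)}} \sr{\fr{q\,\mc{B}_f(M)}{\vp(q)} \sum_m |f(m)|^2}$ for each dyadic piece. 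A geometric sum $\sum_k N_k$, carefully book-kept, yields $c_0 \cdot x/M$.

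The principal obstacle is the spacing verification in the third stage, specifically handling the ``same $(r,a')$, same residue class mod $q$'' collision subcase: one must exploit both the partition bound $|m_1 - m_2| \geq M/\dl_0$ and the multiplicity cap of $O(\dl_0)$ per residue class in $\ms{M}_j$, folding this into the large-sieve constant rather than into $\dl^{-1}$. Tracking the explicit constant $c_0 = 1 + \fr{1}{5(2^{1/3}-1)}$ through the dyadic summation is routine book-keeping but requires care in the range where $N_k + \dl^{-1}$ is not dominated by $2N_k$.
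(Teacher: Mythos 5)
Your overall skeleton matches the paper's: dyadic decomposition in $n$, two applications of Cauchy--Schwarz, Lemma~\ref{lbcr} to trade $(1*\lb)$ for Ramanujan sums, spacing from $(H_0)$, the large sieve, and Graham's $L^2$-asymptotic for $1*\tht$. However, your handling of the spacing is where the proof would break.

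You describe the subcase $r_1 = r_2$, $a_1' = a_2'$, $q \mid (m_1 - m_2)$ as a ``collision'' and propose to cap the multiplicity at $O(\dl_0)$ and ``absorb this into the spacing constant.'' That is not something the large sieve can do: if a frequency set contains clusters of $\sim\!\dl_0$ nearly coincident points, the bound in Lemma~\ref{LSI} degrades by a factor of the cluster size, and the proposition would lose a factor of $\dl_0$, which is fatal. In fact there are no near-collisions to absorb. If $m_1 \neq m_2$ lie in the same block $\ms{M}_j$ with $q \mid (m_1-m_2)$ and $(r_1,a_1') = (r_2,a_2')$, then the two frequencies differ exactly by $(m_1-m_2)\dl/x \pmod 1$, and $(H_0)$ gives $|m_1-m_2| \geq M/\dl_0$; since $|\dl| = 4\dl_0$ whenever $\dl_0 > 1$, this forces $|(m_1-m_2)\dl/x| \geq 4M/x$. (When $\dl_0 = 1$ the subcase is vacuous because $|m_1 - m_2| < M$ on $(M, 2M]$.) Combined with the Farey spacing $\geq 1/(qR^2)$ in the other subcases and the hypothesis $R^2 \leq U/(8\dl_0 q) < x/(8M\dl_0 q)$, the full frequency set inside each $\ms{M}_j$ is honestly $4M/x$-spaced, so the large sieve applies with $\dl^{-1} = x/(4M)$ and no multiplicity penalty. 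Your framing misses that $(H_0)$ is calibrated precisely so that the ``bad'' perturbation $(m_1-m_2)\dl/x$ is itself at least $4M/x$.

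A secondary issue is the order of operations. You apply Cauchy--Schwarz with $T_m = \sum_{n \leq x/m}$ over the full $n$-range and only later decompose dyadically. If the dyadic split is performed inside $|T_m|^2$, expanding $|\sum_k T_{m,k}|^2$ costs a factor $O(\log x)$, which defeats log-freeness. The paper first applies the triangle inequality to split into dyadic pieces $\mc{N} \subseteq (U, x/M]$, applies Cauchy--Schwarz to each piece, and then pays only the geometric sum $\sum_{\mc{N}}|\mc{N}|^\vartheta \ll (x/M)^\vartheta$. Your final sentence (``a geometric sum $\sum_k N_k$'') suggests you intend the latter order; the earlier exposition should be rearranged accordingly so that Cauchy--Schwarz is applied per dyadic block.
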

\begin{proof}
Denote the given sum by $S_{\tII, f}(\al, \mc{M}; x)$ and let $\mc{M}_f := \text{supp}(f) \cap \mc{M}$. It is convenient to partition the $n$-sum dyadically into intervals\footnote{We partition them as $(x/2M, x/M] \cup (x/4M, x/2M] \cup \ldots$.} $\mc{N} \subseteq (N, 2N]$, with $U < N \leq x/M$ (we consider $N > U$ since $(1 * \tht)(n)$ vanishes for $n \leq U$). 

Let
\begin{equation}
\label{Sfdef}
S_{\tII, f}(\alpha, \mc{M}, \mc{N}) := \sum\lm_{\st{m \in \mc{M}}} f(m) \sum_{n \in \mc{N}}  (1* \tht)(n) \, (1 * \lb) (n) \, e(mn\al),
\end{equation}
so that $|S_{\tII, f}(\al,  \mc{M}; x)| \leq \sum\lm_{\st{ \mc{N} \text{ dyadic}\\ \mc{N} \subseteq (U, x/M]}} |S_{\tII, f}(\al, \mc{M}, \mc{N})|$. 

It is therefore enough to consider $S_{\tII, f}(\al, \mc{M}, \mc{N})$.  Applying Cauchy-Schwarz, we obtain:
\begin{equation}
\begin{split}
\label{Sftmp1}
|S_{\tII, f}(\al,  \mc{M},  \mc{N})| &\leq \lf( \sum_{m \in \mc{M}} |f(m)|^2 \rt)^{1/2}  \lf( \sum_{m \in \mc{M}_f} \left| \sum_{n \in \mc{N}}  (1*\tht)(n) \, (1 * \lb)(n) \,  e(mn\al) \rt|^2 \rt)^{1/2} \\
\end{split}
\end{equation}

Therefore, let us consider
\begin{equation*}
\begin{split}
S_{21}(m\al, \mc{N}) = \sum_{n \in \mc{N}} (1 * \tht)(n) \, (1 * \lb)(n)  \, e(mn\al) .
\end{split}
\end{equation*}

Decomposing $(1 * \lb)(n)$ via Lemma \ref{lbcr}, $S_{21}$ can be rewritten as:
\begin{equation*}
\begin{split}
G_q(R) \ S_{21}(m\al, \mc{N})
&= \sum\lm_{\st{r \leq R \\ (r,q) = 1 }} \fr{\mu(r)}{\vp(r)} \sum_{n \in \mc{N}} (1 * \tht)(n) \, c_r(n) \, e(mn \al)\\
&= \sum\lm_{\st{r \leq R \\ (r,q) = 1 }}  \fr{\mu(r)}{\vp(r)}  \sd{}{^*}\sum_{a' \md{r}} \sum_{n \in \mc{N}} (1 * \tht)(n) \,  e\lf( n (m \al + a'/r) \rt).
\end{split}
\end{equation*}

Again applying Cauchy-Schwarz, we get
\begin{equation*}
\begin{split}
G_q(R)^2 \lf| S_{21}(m\al, \mc{N}) \rt|^2
&\leq  \lf( \sum\lm_{\st{r \leq R \\ (r,q) = 1 }}  \fr{\mu^2(r)}{\vp^2(r)} \sd{}{^*}\sum_{a' \md{r}} 1 \rt) \lf( \sum\lm_{\st{r \leq R \\ (r,q) = 1 }} \ \sd{}{^*}\sum_{a' \md{r}} \lf| \sum_{n \in \mc{N}} (1 * \tht)(n) \,  e\lf( n (m \al + a'/r) \rt)  \rt|^2 \rt),
\end{split}
\end{equation*}
or
\begin{equation*}
\begin{split}
G_q(R) \lf| S_{21}(m\al,  \mc{N}) \rt|^2
&\leq \sum\lm_{\st{r \leq R \\ (r,q) = 1 }} \ \sd{}{^*}\sum_{a' \md{r}} \lf| \sum_{n \in \mc{N}} (1 * \tht)(n) \,  e\lf( n (m \al + a'/r) \rt)  \rt|^2.
\end{split}
\end{equation*}

Summing over $m$, we now obtain:
\begin{equation}
\begin{split}
\label{mstmp}
G_q(R) \sum_{m \in \mc{M}_f} \lf| S_{21}(m \al,  \mc{N}) \rt|^2
&\leq \sum_{m \in \mc{M}_f} \sum\lm_{\st{r \leq R \\ (r,q) = 1 }} \ \sd{}{^*}\sum_{a' \md{r}} \lf| \sum_{n \in \mc{N}} (1 * \tht)(n) \, \, e\lf( n (m \al + a'/r) \rt)  \rt|^2.
\end{split}
\end{equation}
%

As is standard, we will use the large sieve inequality (Lemma \ref{LSI}). To be able to apply the large sieve, we want that the quantities 
$$
\{\al_{m, a'/r} = m\al + a'/r: m \in \mc{M}_f, \, r \leq R,  \,(r, q) = 1, \ a' \md{r}, \, (a', r) = 1 \} .
$$
are well-spaced in $\mb{R}/\mb{Z}$. 

We use the assumption \eqref{H0} that $\mc{M}_f = \bigsqcup_{j=1}^{\mf{B}_f(M)} \ms{M}_j$, such that whenever $m \cg m' \md{q}$ with $m \neq m' \in \ms{M}_j$, it implies $|m-m'| \geq M/\dl_0$. 
%
When $\dl_0 = 1$ (i.e., $|\dl| \leq 4$), then $M/\dl_0$ equals $M$, and therefore the case $m \cg m' \md{q}$ with $m \neq m'$ cannot occur in any $ \ms{M}_j$ (since it would imply $|m - m'| \geq M$). Therefore, the separation between any two distinct elements $\al_{m, a'/r}$ and $\al_{m', a''/r'}$, with $m, \, m' \in \ms{M}_j$ and all fractions $a'/r, \, a''/r'$ is: 
\begin{equation*}
\begin{split}
\label{sep}
\lf\|(m-m') \bigl( a/q + \dl/x \bigr) + \fr{a'}{r} - \fr{a''}{r'} \rt\| 
&\geq \begin{cases} \fr{1}{qrr'} - \fr{M|\dl|}{x} , & |\dl| \leq 4,\\  \fr{1}{qrr'} - \fr{M|\dl|}{x}, & |\dl|>4, \ m \not\cg m' \md{q}, \\ \min\lf( \fr{M}{\dl_0} \fr{|\dl|}{x}, \fr{1}{rr'} - \fr{M|\dl|}{x} \rt), & |\dl|>4, \ m \cg m' \md{q}.  \end{cases}\\
&\geq \min \lf(\fr{4M}{x}, \fr{1}{qR^2} - \fr{M |\dl|}{x} \rt)\geq \fr{4M}{x}, 
\end{split}
\end{equation*}
since $R$ satisfies
\begin{equation}
\label{CR}
R^2 \leq \fr{U}{8 \dl_0 q}< \fr{x}{8 M \dl_0 q},
\end{equation}
%
%
Therefore, by the large sieve inequality (Lemma \ref{LSI}) applied to each $\ms{M}_j$, \eqref{mstmp} becomes
\begin{equation*}
G_q(R) \sum_{m \in \mc{M}_f} \lf| S_{21}(m \al, \mc{N}) \rt|^2
\leq \mf{B}_f(M) \lf( |\mc{N}| + \fr{x}{4M} \rt) \sum_{n \in \mc{N}} (1 * \tht)(n)^2 .
\end{equation*}
Note that $\tht'$ corresponds to the Barban-Vehov weights, and $1 * \tht = 1 * \mu - (1 * \tht') = -(1 * \tht')$ (since the parameter $n \gg 1$ in our range). We recall the following result of Graham \cite{Gr2}:
\begin{equation} 
\begin{split}
\label{1*tht}
\sum_{n \leq X} (1 * \tht')(n)^2  &=  \fr{X \, \log \fr{\min(X, U_1)}{U} }{(\log U_1/U)^2} \, + \, O\lf( \fr{X}{\log^2 U_1/U} \rt),
\end{split}
\end{equation}
for $X > U$. The above result implies that (since $\mc{N}$ is a dyadic interval in $(U, x/M]$):
\begin{equation}
\sum_{n \in \mc{N}} (1 * \tht)(n)^2  \leq \fr{(1 + \eps) \, |\mc{N}|}{ \log \fr{U_1}{U}}.
\end{equation}
This effectively yields (using $G_q(R) \geq  \fr{\vp(q)}{q} \log R$):
\begin{equation}
\label{Mftmp1}
\sum_{m \in \mc{M}_f} \lf| S_{21}(m \al, \mc{N}) \rt|^2
\leq (1 + \eps) \, \fr{q}{\vp(q)} \fr{\mf{B}_f(M) \, |\mc{N}|^2}{\log R \, \log \fr{U_1}{U}} \lf( 1 + \fr{x}{4M |\mc{N}|} \rt)  .
\end{equation}

Now, we plug \eqref{Mftmp1} in \eqref{Sftmp1}, sum over $\mc{N}$ and use the inequality\footnote{The reason for avoiding the use of $\sr{1+z} \leq 1 + \sr{z}$, is that it is weaker from a numerical point of view.} $\sr{1+z} \leq 1 + \fr{1}{2} z^{2/3}$,  for all $z \geq 0$ to obtain:
\begin{equation*}
\begin{split}
|S_{\tII, f}(\al, \mc{M}; x)| &\leq (1 + \eps) \sr{\fr{q}{\vp(q)} \fr{\mf{B}_f(M)}{\log R \, \log \fr{U_1}{U}}  \sum_{m \in \mc{M}} |f(m)|^2 } \sum\lm_{\st{\mc{N} \text{ dyadic} \\ \mc{N} \subseteq (U, x/M]}} 
\lf( |\mc{N}| +  \fr{1}{2} \Bigl( \fr{x}{4M} \Bigr)^{2/3}|\mc{N}|^{1/3} \rt)\\
&\leq (1 + \eps) \fr{c_0 \,  x}{M}\sr{\fr{q}{\vp(q)} \fr{\mf{B}_f(M)}{\log R \, \log \fr{U_1}{U}} \sum_{m \in \mc{M}} |f(m)|^2},
\end{split}
\end{equation*}
%
with $c_0 = 1 + \fr{2^{-7/3}}{2^{1/3} - 1}$,  since $\sum\lm_{\st{\mc{N} \text{ dyadic} \\ \mc{N} \subseteq (U, x/M]}}  |\mc{N}|^{\vartheta} \leq \fr{1}{2^{\vartheta} - 1}(x/M)^{\vartheta}$, for all $0 < \vartheta \leq 1$.   
%
%
\end{proof}
\begin{rem}
In the above proof, the $(1 * \lb)$ essentially mimics the effect of an averaged Montgomery's inequality (thanks to Lemma \ref{lbcr}). The $L^2$-norm of $(1 * \tht)$ gives yet another logarithmic saving. Moreover, in the case when $f = \Lb$, even $B_{\Lb}(M)$ saves yet another $\log$ (Lemma \ref{BM} (a)). The proof draws upon ideas from \cite[Thm 3.3]{PS} of the author's thesis, where the sum $\sum_{p_1 \in \mc{M}}\sum_{p_2 \in \mc{N}} a(p_1) b(p_2) \, e(p_1 p_2 \al) $, with $p_1$, $p_2$ primes is considered. 
\end{rem}

\smallskip

The following is a straightforward consequence of the Euler summation formula.
\begin{lem}
\label{dyadic}
Suppose that $1 < A < B < x$ and let $\phi: (0,\infty) \to [0, \infty)$ be such that $\phi'(t) \leq 0$. Then, we have:
\begin{equation*}
\sum\lm_{\st{A< m \leq B \\ m = 2^k}} \phi\lf( \fr{\log m}{\log x} \rt) \leq \fr{\log x}{\log 2} \int_{\fr{\log A}{\log x}}^{\fr{\log B}{\log x}} \phi(t) \, dt \, + \, \phi\lf( \fr{\log A}{\log x} \rt) .
\end{equation*}
\end{lem}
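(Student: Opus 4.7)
The plan is to reduce the sum to a standard comparison between a non-increasing function's values on a uniform grid and its integral. Set $u_k = \frac{k \log 2}{\log x}$, so that the substitution $m = 2^k$ gives $\frac{\log m}{\log x} = u_k$, and the grid spacing is $u_k - u_{k-1} = \frac{\log 2}{\log x}$. With $k_0 = \lceil \frac{\log A}{\log 2} \rceil$ and $k_1 = \lfloor \frac{\log B}{\log 2} \rfloor$, the sum in question is $\sum_{k_0 \leq k \leq k_1} \phi(u_k)$, so what must be shown is
\begin{equation*}
\sum_{k = k_0}^{k_1} \phi(u_k) \leq \frac{\log x}{\log 2} \int_{\frac{\log A}{\log x}}^{\frac{\log B}{\log x}} \phi(t) \, dt \, + \, \phi\!\left( \frac{\log A}{\log x} \right).
\end{equation*}

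The key observation is that since $\phi'(t) \leq 0$, the function $\phi$ is non-increasing, so for each $k \geq k_0 + 1$ one has $\phi(u_k) \leq \phi(t)$ for all $t \in [u_{k-1}, u_k]$, and therefore
\begin{equation*}
\phi(u_k) \cdot \frac{\log 2}{\log x} \, = \, \phi(u_k)(u_k - u_{k-1}) \, \leq \, \int_{u_{k-1}}^{u_k} \phi(t) \, dt.
\end{equation*}
Summing telescopically from $k = k_0 + 1$ to $k = k_1$ gives
\begin{equation*}
\sum_{k = k_0 + 1}^{k_1} \phi(u_k) \, \leq \, \frac{\log x}{\log 2} \int_{u_{k_0}}^{u_{k_1}} \phi(t) \, dt \, \leq \, \frac{\log x}{\log 2} \int_{\frac{\log A}{\log x}}^{\frac{\log B}{\log x}} \phi(t) \, dt,
\end{equation*}
where the last inequality uses $u_{k_0} \geq \frac{\log A}{\log x}$ and $u_{k_1} \leq \frac{\log B}{\log x}$ together with $\phi \geq 0$.

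It remains to handle the first term $\phi(u_{k_0})$, which is exactly the boundary contribution that cannot be absorbed into the integral comparison (since $u_{k_0-1}$ may lie below $\frac{\log A}{\log x}$). But since $u_{k_0} \geq \frac{\log A}{\log x}$ and $\phi$ is non-increasing, we have $\phi(u_{k_0}) \leq \phi\bigl( \frac{\log A}{\log x} \bigr)$. Adding this to the previous display yields the claimed bound. No genuine obstacle arises; the only delicate point is tracking whether the extremal $k$ contributes to the integral or to the boundary term, and the cleanest bookkeeping is to peel off $k = k_0$.
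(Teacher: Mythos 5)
Your proof is correct. The paper gives no argument for this lemma, remarking only that it is ``a straightforward consequence of the Euler summation formula''; your direct monotone-comparison argument (peel off $k = k_0$, bound each remaining $\phi(u_k)\cdot\tfrac{\log 2}{\log x}$ by $\int_{u_{k-1}}^{u_k}\phi$, then extend the integration range using $\phi\geq 0$) is exactly the elementary content behind that remark, and all the boundary bookkeeping ($u_{k_0}\geq \log A/\log x$, $u_{k_1}\leq \log B/\log x$, and the edge case where $A$ is a power of $2$ only enlarges your sum since $\phi\geq 0$) checks out.
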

\medskip

\subsection{Proof of Theorem \ref{TII}}
Let us first consider the sum with $\Lb$. We split 
$$\Lb(m) = \bm{1}_{\mb{P}}(m) \log m  + \Lb'(m), $$ 
where $\bm{1}_{\mb{P}}$ is the indicator function of the primes and $\Lb'(m) = \begin{cases} \log p, & m = p^k, \text{ with } k \geq 2 \\0, & \text{otherwise.} \end{cases}$. 

We deal with the first term $\bm{1}_{\mb{P}} \cdot \log$ first. Applying Lemma \ref{BM} (a) with $L = \fr{M}{\dl_0 q}$, we can split the primes in $(M, 2M]$ into at most $\ceil{\mf{B}_1(M)}$ subsets 
$\ms{M}_1, \ldots, \ms{M}_{\ceil{\mf{B}_1}}$, where 
$$\mf{B}_{1}(M) = \fr{q}{\vp(q)} \fr{2L}{\log L} = \fr{2q}{\vp(q)} \fr{M}{\dl_0 q \, \log \fr{M}{\dl_0 q}},
$$
such that the hypothesis \eqref{H0} of Proposition \ref{fII} is satisfied. We also see that since $M/(\dl_0 q)$ is sufficiently large, one has $\ceil{\mf{B}_1} \leq (1 + \eps) \mf{B}_1$. 

We also have from Lemma \ref{Lbmu}, that $\sum_{p \sim M} (\log p)^2 \leq (1 + \eps) M \log M$. By an application of Proposition \ref{fII} with $f = \bm{1}_{\mb{P}} \cdot \log$, we have (using the notation of $S_{\tII, f}(\al, \mc{M}; x)$ from \eqref{Sfdef})
\begin{equation}
\begin{split}
\label{1Plog}
|S_{\tII, \bm{1}_{\mb{P}} \cdot \log}(\al, \mc{M}; x)| 
&\leq \fr{(c_0 + \eps) \, x}{M} \sr{\fr{q}{\vp(q)} \fr{\ceil{\mf{B}_1}}{\log R \, \log \fr{U_1}{U}} \, \sum_{p \sim M} (\log p)^2 } \\
&\leq \fr{q}{\vp(q)} \fr{(c_0 \, \sr{2} + \eps) \, x}{\sr{\dl_0 q}} \sr{\fr{\log M}{ \log R \, \log \fr{M}{\dl_0 q} \, \log \fr{U_1}{U} }}.
\end{split}
\end{equation}

\smallskip

Next, we shall estimate the sum over $\Lb'$. Note that
\begin{equation*}
\begin{split}
\sum_{m \in \mc{M}} \Lb'(m) &\leq \sum_{k=2}^{\floor{\log_2(2M)}} \sum_{p^k \sim M} \log p 
\leq (1 + \eps) \sum_{k=2}^{\floor{\log_2(2M)}} M^{1/k} 
\leq (1 + \eps) \sr{M},
\end{split}
\end{equation*}
since $M$ is sufficiently large. 

Also, the $L^2$-norm of the Selberg-sieve weights \eqref{lbR} is just the upper bound the sieve yields (see \cite{RH})
\begin{equation} 
\label{1*lb}
\sum_{n \leq X} (1 * \lb) (n)^2 = \fr{X}{G_q(R)} + O\lf( \fr{R^2}{G_q(R)^2} \rt) \leq \fr{(1 + \eps) \, X}{G_q(R)},
\end{equation}
whenever $X \gg R^2$. 
Therefore, bounding the sum trivially, we have (using \eqref{1*tht} and \eqref{1*lb}), that\footnote{We can apply \eqref{1*tht} and \eqref{1*lb}, since $x/M >  U \gg R^2$.}
\begin{equation}
\begin{split}
\label{Lbp}
|S_{\tII, \Lb'}(\al, \mc{M}; x)| &\leq \lf( \sum_{m \in \mc{M}} \Lb'(m) \rt)  \Biggl(\sum_{n \leq x/M} |(1*\tht)(n) \, (1 * \lb)(n)| \Biggr)\\ 
&\leq (1 + \eps) \sr{M} \Biggl( \sum_{n \leq x/M} (1 * \tht)(n)^2 \Biggr)^{1/2} \Biggr( \sum_{n \leq x/M} (1 * \lb)(n)^2 \Biggr)^{1/2} \\
&\leq \sr{\fr{q}{\vp(q)}} \ \fr{(1 + \eps) \, x}{\sr{M} \sr{\log R \, \log \fr{U_1}{U}}} .
\end{split}
\end{equation}
%
Combining \eqref{1Plog} and \eqref{Lbp}, and noting that in \eqref{Lbp}, $1/\sr{M} \leq \eps/\sr{\dl_0 q}$, it follows that:
$$
|S_{\tII, \Lb}(\al, \mc{M}; x)| \leq \fr{q}{\vp(q)} \fr{(c_0 \sr{2} + \eps) \, x}{\sr{\dl_0 q}} \sr{\fr{\log M}{ \log R \, \log \fr{M}{\dl_0 q} \, \log \fr{U_1}{U}}} .
$$
Now, summing over the dyadic intervals $\mc{M} \subseteq (V, x/U]$ above (using Lemma \ref{dyadic}), we get
\begin{equation*}
\begin{split}
|S_{\tII, \Lb}(\al; x)| &\leq \fr{q}{\vp(q)} \fr{(c_0\sr{2} + \eps) \, x}{\sr{\dl_0 q \, \log R \, \log \fr{U_1}{U}}} \sum\lm_{\st{V < M \leq x/U \\ M = 2^k}} \sr{\fr{\log M}{\log \fr{M}{\dl_0 q}}} \\
&\leq \biggl( \fr{c_0\sr{2}}{\log 2} + \eps \biggr) \,\fr{q}{\vp(q)} \fr{ x \, \log x}{ \sr{\dl_0 q \, \log R \, \log \fr{U_1}{U}}} \, \Biggl( \int_{\fr{\log V}{\log x}}^{\fr{\log x/U}{\log x}} \sr{\fr{t}{t - \fr{\log \dl_0 q}{\log x}}} \ dt + O\biggl( \fr{\eta^{-1}}{\log x} \biggr) \Biggr)\\
&\leq \fr{q}{\vp(q)} \fr{ 3.6 \, x \, \log x}{ \sr{\dl_0 q \, \log R \, \log \fr{U_1}{U}}} \int_{\fr{\log V}{\log x}}^{\fr{\log x/U}{\log x}} \sr{\fr{t}{t - \fr{\log \dl_0 q}{\log x}}} \ dt .
\end{split}
\end{equation*} 
%

\smallskip

Next, we consider $S_{\tII, \mu}(\al, \mc{M})$. By Lemma \ref{BM} (b) with $L = M/\dl_0 q$, we can partition $(M, 2M]$ into at most $\mf{B}_2 = \ceil{\fr{M}{\dl_0 q}}$ subsets, such that the hypothesis \eqref{H0} of Proposition \ref{fII} is satisfied. Therefore, applying Proposition \ref{fII} with $f = \mu$ and using Lemma \ref{Lbmu}, we find
\begin{equation*}
|S_{\tII, \mu}(\al, \mc{M}; x)| \leq \sr{\fr{q}{\vp(q)}} \fr{(c_0\sr{6/\pi^2} + \eps) \, x}{\sr{\dl_0 q \, \log R \, \log \fr{U_1}{U}}} .
\end{equation*}
Now, summing over dyadic intervals using Lemma \ref{dyadic}, we get:
$$
|S_{\tII, \mu}(\al; x)| \leq (1 + \eps) \fr{c_0 \sr{6}}{\pi \log 2} \, \fr{x}{\sr{\dl_0 \vp(q)}} \fr{\log \fr{x}{UV} + O(1)}{\sr{\log R \, \log \fr{U_1}{U}}} \leq \fr{2 \, x}{\sr{\dl_0 \vp(q)}}  \fr{\log \fr{x}{UV}}{\sr{\log R \, \log \fr{U_1}{U}}}  .
$$

This completes the proof of Theorem \ref{TII}. 

\medskip

\section{Completing the proof of Theorem \ref{main}}

\subsection{Combining all bounds}
First, let us evaluate the final expression for the Von-Mangoldt function. Recall from \eqref{VI*}, that we have $\Lb = h * \log - 1 *h * \Lb_{\leq V} + (1 * \tht)(1 * \lb) * \Lb_{>V} + \Lb_{\leq V}$. From Theorem \ref{TI}, we get the bounds for the two type-I sums, while from Theorem \ref{TII}, we get the type-II estimate. Now, the fourth term (arising from $\Lb_{\leq V}$) is just $O(V)$. 

Therefore, under the conditions \eqref{CI} and \eqref{CII}, one has:
\begin{equation}
\begin{split}
\label{Lbtotal}
\lf| \sum_{n \leq x} \Lb(n) e(n\al) \rt| &\leq (1 + \eps) \fr{q}{\vp(q)} \lf(\fr{x}{\dl_0 q} + \fr{3.6 \,  x \, \log x}{ \sr{\dl_0 q \, \log R \, \log \fr{U_1}{U}}} \, \int_{\fr{\log V}{\log x}}^{\fr{\log x/U}{\log x}} \sr{\fr{t}{t - \fr{\log \dl_0 q}{\log x}}} \ dt  \rt) \\
&\qquad + O\lf(\fr{q}{\vp(q)} \fr{U_1VR \, \log^2 x}{\log R \, \log \fr{U_1}{U}} \rt) .
\end{split}
\end{equation}

\smallskip

Now, we consider $\mu$. Recall from \eqref{Vmu*}, that $\mu = h - 1 * h * \mu_{\leq V} + (1*\tht)(1 * \lb) * \mu_{>V} + \mu_{\leq V}$. By Lemma \ref{qhm} (c) (with $q_0 = 1$), one can bound the estimate from the first term. For the second term, Theorem \ref{TI} (c) does the job. The last term is just $O(V)$.

Therefore, under \eqref{CI} and \eqref{CII}, one obtains (using $\tau(q) \leq 3 \sr{\vp(q)} \leq 3 \sr{\dl_0 \vp(q)}$.)
\begin{equation}
\begin{split}
\label{mufinal}
\lf| \sum_{n \leq x} \mu(n) e(n \al) \rt| &\leq \fr{9 \, x \, \log V}{\sr{\dl_0 \vp(q)} \, \log \fr{U_1}{U} \, \log \fr{U}{qR}}  + \fr{2 \, x}{\sr{\dl_0 \vp(q)}} \fr{\log \fr{x}{UV}}{\sr{\log R \, \log \fr{U_1}{U}}}\\
&\qquad + O\lf( \fr{q}{\vp(q)} \fr{U_1 V R \, \log^2 x}{\log R \, \log \fr{U_1}{U}} \rt) .
\end{split}
\end{equation}
%
%

\smallskip

\subsection{Choice of the parameters}
Now that we have obtained the final bounds for the exponential sums over $\Lb$ and $\mu$ in \eqref{Lbtotal} and \eqref{mufinal}, let us figure the optimal choice of the parameters $U$, $U_1$, $R$ and $V$. Moreover, it is seen that it is the type-II contribution, which primarily decides the asymptotic constants, as the type-I plus the error term contribute very little.

For the sake of convenience, let 
$$ U_1/U = R_1 .$$ 
We observe that there is a term $O\Bigl( \fr{q}{\vp(q)} \fr{U_1VR \, \log^2 x}{\log R \, \log R_1} \Bigr)$ occurring in both \eqref{Lbtotal} and \eqref{mufinal}, which needs to be kept in check. Therefore, we also assume that 
\begin{equation} 
\label{nC}
U_1VR = UVRR_1 \leq \fr{x}{8 \sr{\dl_0 q} \, \log^2 x} .
\end{equation} 
This ensures that the last error term in \eqref{Lbtotal} and \eqref{mufinal} is at most $O\Bigl( \fr{x \, \log \log q}{\sr{\dl_0 q} \, \log R \, \log \fr{U_1}{U}} \Bigr)$, since $q/\vp(q) \ll \log \log q$. 

Therefore, we assume the following (taking into account \eqref{CI}, \eqref{CII} and \eqref{nC}):
%
\begin{equation}
\label{C0}
\tag{$\mc{C}12$}
\begin{split}
&V \geq x^{\eta/3} \dl_0 q, \quad \, R \geq x^{\eta/4}, \quad  \, U \geq 9 \, \max\Bigl( \dl_0 (Rq)^{1 + \eta/2}, R^2\dl_0 q \Bigr), \quad \, U V R R_1 \leq  \fr{x \, \min\bigl( 1, \sr{ \fr{q}{\dl_0} }\bigr)}{8 \, \sr{\dl_0 q} \, \log^2 x}, \\ 
&\text{and } \ qVR \leq Q. 
\end{split}
\end{equation}

After some back of the envelope calculations, we make the following choice (these are not exactly optimal but close and also keep things elegant):
%
%
\begin{equation}
\label{choiceLb}
V = x^{(\eta - \eta^3)/2} \, \dl_0 q, \quad U = \lf(\fr{x^{1 - \eta/2} \, \Delta}{(\dl_0 q)^{1/2}}\rt)^{1/2} , \quad R = R_1 = \fr{1}{3}\lf( \fr{x^{1 - \eta/2} \, \Delta}{(\dl_0q)^{5/2}} \rt)^{1/4},
\end{equation}
where 
$$
\Delta = \min\lf( 1, \sr{\fr{q}{\dl_0}} \rt).
$$
Moreover, it can be readily seen that 
$$\fr{\log \Delta}{\log x} = -\fr{\log^+ \dl_0/q}{2 \log x},$$
where $z^+$ equals $\max(z, 0)$. 

\begin{rem}
The quantity $\Delta$ comes into play only when $q \leq x^{1/10 + \eta/2}$, since that is when $\dl_0$ could be possibly larger than $q$. Once $q > x^{1/10 + \eta/2}$, we have $\Delta = 1$. 
\end{rem}
\subsubsection{Verification of the conditions \tps{\eqref{C0}}{}}
We now verify that the parameters in \eqref{choiceLb} satisfy the conditions \eqref{C0}. 

First, we ensure that $R = R_1 \geq x^{\eta/4}$, i.e., $x(\dl_0 q)^{-5/2} \Delta \geq x^{\eta}$. Now, this is at least $\min\bigl(\fr{x}{(\dl_0q)^{5/2}}, \fr{x}{\dl_0^3 q^2} \bigr)$. The first one is clearly $\gg x^{5\eta/2}$ (as $\dl_0q \leq x^{2/5 - \eta}$). Also since $\dl_0 \ll x^{1/5 + \eta}/q$, the second one is at least $\gg x^{2/5 - 3\eta} q \geq x^{\eta}$, since $\eta \leq 1/10$. Thus $R = R_1 \gg x^{\eta/4}$ holds. 

Now, $V = x^{\eta/2 - \eta^3/2} \, \dl_0 q \geq x^{\eta/3} \, \dl_0 q$ clearly holds for all $\eta \leq 1/10$. 

Next, it can be easily seen that $U =  9R^2 \dl_0q$ with this choice. And $R^2 \dl_0 q \gg \dl_0(R q)^{1 + \eta/2}$ holds if and only if $q^{2\eta} (\dl_0 q)^{\fr{5(1-\eta/2)}{2}} \ll x^{(1-\eta/2)^2} \Delta^{(1-\eta/2)}$. Now, if $\Delta = 1$ (i.e. $q > \dl_0$), then this is equivalent to $\dl_0^{5/2 - 5\eta/4} q^{5/2 + 3\eta/4} \ll x^{(1-\eta/2)^2}$. Since $\dl_0q \leq x^{2/5 - \eta}$, it is enough to have $(2/5 - \eta)(5/2 + 3\eta/4) < (1 - \eta/2)^2 \iff 7\eta/4 + 6/5>0$, which is true. Now, we assume that $\Delta = \sr{q/\dl_0}$ (i.e., $\dl_0 > q$ which implies $q < x^{1/10 + \eta/2}$ and hence $\dl_0q \leq x/Q = x^{1/5 + \eta}$). Then this reduces to $\dl_0^{3 - 3\eta/2} q^{2 + \eta} \ll x^{(1-\eta/2)^2}$. Now, since $3-3\eta/2 > 2 + \eta$ and $\dl_0q \leq x^{1/5 + \eta}$, it's enough to have $(3-3\eta/2)(1/5 + \eta) < (1 -\eta/2)^2 \iff 2/5 + 7\eta^2/4 > 3.7 \eta$, which holds for $\eta \leq 1/10$. 
 
Now, $UVRR_1 = UVR^2 = \fr{x^{1 - \eta^3/2}}{9 \sr{\dl_0 q}} \Delta$, which is exactly what we want, since the $x^{\eta^3/2}$ power-saving takes care of any power of $\log x$. 

Next, $qVR \leq (\dl_0 q) V R = x^{1/4 + 3\eta/8 - \eta^3/2} (\dl_0 q)^{11/8} \leq x^{\fr{11}{8} \bigl( 2/5 - \eta \bigr) + 1/4 + 3\eta/8 - \eta^3/2} 
< x^{4/5 - \eta} = Q$. 

We have thus verified that the parameters in \eqref{choiceLb} satisfy all the conditions of \eqref{C0}. 
\smallskip
\subsection{Concluding the proof}
With the above choice of the parameters, we see that \eqref{Lbtotal} becomes
\begin{equation*}
\begin{split}
\lf| \sum_{n \leq x} \Lb(n) e(n\al) \rt| 
&\leq \fr{q}{\vp(q)} \fr{(1 + \eps) \, x}{\sr{\dl_0 q}} \lf( 1 +  \fr{14.4}{\bigl(1 - \fr{\eta}{2} - \fr{5 \log \dl_0q}{2\log x} - \fr{\log^+ \dl_0/q}{2 \log x} \bigr)} 
\int\lm_{ \fr{\eta - \eta^3}{2}  +  \fr{\log \dl_0 q}{\log x} }^{\fr{1}{2} + \fr{\eta}{4} + \fr{\log \dl_0 q}{4 \log x} + \fr{\log^+ \dl_0/q}{4 \log x}} \sr{\fr{t}{t - \fr{\log \dl_0 q}{\log x}}} \ dt \rt)\\
&\leq \fr{q}{\vp(q)} \fr{x}{\sr{\dl_0 q}} \, \ms{F}_{\eta}\lf( \fr{\log \dl_0 q}{\log x}, \fr{\log^+ \dl_0/q}{\log x} \rt),
\end{split}
\end{equation*}
with 
$$
\ms{F}_{\eta}(u, u_0) = 1.01 + \fr{14.41}{1 - (\eta +  5u + u_0)/2} \int_{\fr{\eta - \eta^3}{2} + u}^{(2 + \eta + u + u_0)/4}  \sr{\fr{t}{t-u}} \, dt,
$$
for all $0 \leq u \leq 2/5 - \eta$ and $0 \leq u_0 \leq \min\bigl( u,1/5 + \eta \bigr)$. 
\smallskip

And \eqref{mufinal} becomes:
\begin{equation}
\begin{split}
\lf| \sum_{n \leq x} \mu(n) e(n\al) \rt| &\leq \fr{(2 + \eps)  \, x}{\sr{\dl_0 \vp(q)}}\lf(\fr{\log \biggl( \fr{x^{1/2 - \eta/4 + \eta^3/2}}{(\dl_0q)^{3/4} \Delta^{1/2}}\biggr)}{ \fr{1}{4} \log \fr{x^{1 - \eta/2} \, \Delta}{(\dl_0q)^{5/2}}}  \rt) 
\leq \fr{x}{\sr{\dl_0 \vp(q)}} \, \ms{G}_{\eta}\lf( \fr{\log \dl_0 q}{\log x}, \fr{\log^+ \dl_0/q}{\log x} \rt),
\end{split}
\end{equation}
with 
$$
\ms{G}_{\eta}(u, u_0) = 4.01 \, \fr{1  + \eta^3 -(\eta + 3u + u_0)/2}{ 1 - (\eta + 5u + u_0)/2},
$$
again for $0 \leq u \leq 2/5 - \eta$ and $0 \leq u_0 \leq \min\bigl( u,1/5 + \eta \bigr)$. 

This completes the proof of Theorem \ref{main}. 
%
\begin{rem}
As we are saving enough logarithms at all stages throughout the proof, we do not require $\dl_0 q$ to be bounded from below. The trivial fact $\dl_0 q \geq 1$ is alone sufficient!
\end{rem}
\vskip 0.05in
\subsection*{Acknowledgements.} I am grateful to Prof. Harald Helfgott for many enlightening discussions, and his valuable feedback on major portions of the paper. I also thank Prof. Olivier Ramar\'{e} for many fruitful interactions in the past, and Prof. R. Balasubramanian for his guidance during my doctoral years.

\bigskip

\printbibliography

\vskip 0.36	in

\hrule

\end{document}